\newcolumntype{a}{>{\columncolor{Gray}}c}
\newcolumntype{b}{>{\columncolor{white}}c}
\definecolor{zzttqq}{rgb}{0.6,0.2,0.0}
\definecolor{qqqqff}{rgb}{0.0,0.0,1.0}
\definecolor{cqcqcq}{rgb}{0.7529411764705882,0.7529411764705882,0.7529411764705882}
\definecolor{Gray}{gray}{0.85}
\definecolor{LightCyan}{rgb}{0.88,1,1}
\def\kc{\Bbbk}
\def\F{\mathcal F}
\def\f2{\mathbb F_2}
\def\fp{\mathbb F_p}
\def\a2{\mathcal A_2}
\def\v{\mathcal V}
\def\h{\textup{H}}
\def\n{\mathbb N}
\def\z{\mathbb Z}
\def\nil{\mathcal{N}il}
\def\u{\mathcal U}
\def\p{\mathcal P}
\newcommand{\cke}[1]{\mathrm{Coker}\left(#1\right)}
\newcommand{\ex}[4]{\mathrm{Ext}_{#1}^{#2}\left(#3,#4\right)}
\newcommand{\ho}[3]{\mathrm{Hom}_{#1}\left(#2,#3\right)}
\newcommand{\ke}[1]{\mathrm{Ker}\left(#1\right)}
\newcommand{\dsum}[3]{\underset{#1}{\overset{#2}{\bigoplus}}#3}
\newcommand{\rd}[2]{\mathrm{R}^{#1}\left(#2\right)}
\newcommand{\ro}[3]{\rho_{#1,#2}\left(#3\right)}
\newcommand{\pa}[2]{\partial^{#1}\left(#2\right)}
\newcommand{\morp}[2]{#1\left(#2\right)}
\newcommand{\de}[1]{\mathrm{d}\left(#1\right)}
\newtheorem{notat}{Notation}
\newtheorem{conex}{Contre-exemple}
\newcounter{nmdthmcnt}
\newenvironment{namedthm}[2][]{\addtocounter{nmdthmcnt}{1}
    \newtheorem*{nmdthm\roman{nmdthmcnt}}{Théorème #2}%
    \begin{nmdthm\roman{nmdthmcnt}}[#1]}{\end{nmdthm\roman{nmdthmcnt}}}
\newenvironment{namedcor}[2][]{\addtocounter{nmdthmcnt}{1}
    \newtheorem*{nmdthm\roman{nmdthmcnt}}{Corollaire #2}%
    \begin{nmdthm\roman{nmdthmcnt}}[#1]}{\end{nmdthm\roman{nmdthmcnt}}}
\newenvironment{namedpro}[2][]{\addtocounter{nmdthmcnt}{1}
    \newtheorem*{nmdthm\roman{nmdthmcnt}}{Proposition #2}%
    \begin{nmdthm\roman{nmdthmcnt}}[#1]}{\end{nmdthm\roman{nmdthmcnt}}}
\author[NGUYEN The Cuong]{\selectlanguage{vietnam} Thế Cường NGUYỄN}
\address{Laboratoire Analyse, Géométrie et Applications - UMR7539 du CNRS\\99 Avenue Jean-Baptiste Clément\\93430 Villetaneuse\\France}
\email{tdntcuong@gmail.com ou nguyentc@math.univ-paris13.fr}
\urladdr{http://math.univ-paris13.fr/~nguyentc/Pageweb.html}    
\title[Sur la torsion de Frobenius de la catégorie $\u$]{Sur la torsion de Frobenius de la catégorie des modules instables} 
\begin{document}
\frontmatter
\begin{abstract}
Un des phénomènes marquants dans la catégorie $\p_{d}$ des foncteurs polynomiaux stricts est l'injectivité des morphismes induits par la torsion de Frobenius entre groupes d'extensions des foncteurs. Dans \cite{Cuo14}, l'auteur démontre que le foncteur de Hai, allant de la catégorie $\p_{d}$ vers la catégorie des modules instable $\u$, est pleinement fidèle. Cela fait de la catégorie $\p_{d}$ une sous-catégorie pleine de la catégorie $\u$. La torsion de Frobenius s'étend à toute la catégorie $\u$,
mais n'y est pas aussi bien comprise. Cet article étudie la torsion de Frobenius, soit dans ce cas le foncteur double $\Phi$, et ses effets sur les groupes d'extension des modules instables. On donne des calculs explicites de nombreux groupes d'extensions des modules instables, 
et permet de confirmer, dans de nombreux cas,
l'injectivité des morphismes entre des groupes d'extensions induits par la torsion de Frobenius dans la catégorie $\u$.
Ces résultats sont obtenus en étudiant la résolution injective minimale du module instable libre $F(1)$.  

\end{abstract}
\begin{altabstract}
In the category $\p_{d}$ of strict polynomial functors, the morphisms between extension groups induced by the Frobenius twist are injective. In \cite{Cuo14}, the category $\p_{d}$ is proved to be a full sub-category of the category $\u$ of unstable modules \textit{via} Hai's functor. The Frobenius twist is extended to the category $\u$ but remains mysterious there. This article aims to study the Frobenius twist $\Phi$ of the category $\u$ and its effects on the extension groups of unstable modules. We compute explicitly several extension groups and show that in these cases, the morphisms induced by the Frobenius twist are injective. These results are obtained by constructing the minimal injective resolution of the free unstable module $F(1)$.

\end{altabstract}
\subjclass{55S10, 18A40}
\keywords{Algèbre de Steenrod, foncteurs polynomiaux stricts, modules instables, torsion de Frobenius}
\altkeywords{Frobenius twist, module nilpotent, résolution injective, Steenrod algebra, strict polynomial functors, unstable modules}
\translator{The Cuong NGUYEN}
\thanks{L'auteur est partiellement soutenu par le programme ARCUS Vietnam MAE, Région IDF et par LIAFV - CNRS - Formath Vietnam}
\maketitle
\tableofcontents 
\mainmatter

\section{Introduction}
Soit $\kc$ un corps de caractéristique $p>0,$ et soit $V$ un $\kc-$espace vectoriel. On note $F$ pour l'homomorphisme de Frobenius $x\mapsto x^{p}.$ La torsion de Frobenius de $V,$ notée $V^{(1)},$ est le $\kc-$espace vectoriel, qui comme groupe abélien, s'identifie à $V$ mais, dont la multiplication par les scalaires est donnée par $$\lambda\cdot v=\lambda^{p}v.$$
\'{E}tant donné un $GL_{n}(\kc)-$module $M$, sa torsion de Frobenius $M^{(1)}$ est le $GL_{n}(\kc)-$module induit par l'homomorphisme de Frobenius 
$$GL_{n}(\kc)\xrightarrow{F}GL_{n}(\kc).$$

D'après \cite{CPS83,Jan87}\footnote{Je tiens à remercier Professeur Wilberd van der Kallen pour m'avoir indiqué la bonne réference pour ce résultat.}, la torsion de Frobenius induit des monomorphismes entre des groupes d'extensions des $GL_{n}(\kc)-$modules:
$$\ex{GL_{n}(\kc)}{*}{M}{N}\hookrightarrow\ex{GL_{n}(\kc)}{*}{M^{(1)}}{N^{(1)}}.$$
Selon Friedlander et Suslin \cite[corollaire 3.13]{FS97}, les groupes d'extensions des $GL_{n}(\kc)-$modules se calculent comme ceux des foncteurs polynomiaux stricts. On a aussi une torsion de Frobenius $\p_{d}\xrightarrow{(-)^{(1)}}\p_{pd}$ dans la catégorie $\bigoplus_{d\geq 0}\p_{d}$ des foncteurs polynomiaux stricts, et celle-ci induit donc des monomorphismes sur les groupes d'extensions. Ce résultat est crucial dans \cite{FFSS99} pour démontrer que les groupes d'extensions des foncteurs polynomiaux stricts se stabilisent par rapport aux itérés de la torsion de Frobenius.

Soit $\F$ la catégorie des foncteurs de la catégorie des $\fp-$espaces vectoriels de dimension finie vers la catégorie des $\fp-$espaces vectoriels. Il n'y a pas de torsion de Frobenius non-triviale dans $\F$. Le foncteur oubli $\bigoplus_{d\geq 0}\p_{d}\to \F$ se factorise à travers de la catégorie $\u$ des modules instables \cite{Hai},\textit{ via }un foncteur exact:
$$\bar{m}:\bigoplus_{d\geq 0}\p_{d}\to \u$$
La restriction $\bar{m}_{d}$ de ce foncteur sur $\p_{d}$ est pleinement fidèle \cite{Cuo14}. Il y a sur $\u$ un avatar de la torsion de Frobenius: le foncteur double $\Phi.$ Nguyen D. H. Hai \cite{Hai} montre que:
$$\Phi \bar{m}_{d}(F)\cong \bar{m}_{d}\left(F^{(1)}\right).$$ 
Il est donc naturel de se poser la question de l'injectivité de la torsion de Frobenius sur les groupes d'extensions des modules instables. Celle-ci ne peut avoir lieu en toute généralité, ne serait ce qu'à cause des modules instables nilpotents. Des contre-exemples seront donnés avec des modules instables $\nil-$fermés aussi. 

Cependant des calculs effectués, à l'aide du module instable $F(1)\cong \bar{m}_{1}(I),I$ désignant le foncteur d'inclusion, sur le système
$$\cdots\to\ex{\u}{*}{\Phi^{n}F(1)}{\Phi^{n}F(1)}\to \ex{\u}{*}{\Phi^{n+1}F(1)}{\Phi^{n+1}F(1)}\to\cdots$$
montrent qu'il subsiste des propriétés intéressantes:
\begin{namedthm}{\ref{princh44}}
	Pour $i\leq 49$ ou $i=2^{n}-2^{5}+t$ avec $0\leq t\leq 2^{5}+2$ et $n>5$, il y a des monomorphismes
	$$\ex{\u}{i}{\Phi^{r} F(1)}{\Phi^{r} F(1)}\hookrightarrow\ex{\u}{i}{\Phi^{r+1} F(1)}{\Phi^{r+1} F(1)}$$
	pour tout $r$.
\end{namedthm}
Pour effectuer ces calculs, on a besoin d'étudier la résolution injective (minimale de préférence) de $F(1).$ On la désigne par $\left(I^{r},\partial^{r} \right)^{r\geq 0}.$ Nous ne la connaissons pas, mais nous pouvons dire beaucoup de choses à ce propos. 

Rappelons les objets injectifs de la catégorie $\u$. On désigne par $J(n)$ l'enveloppe injective de la cohomologie réduite $\tilde{\h}^{*}\left(S^{n};\f2\right)$ du sphère $S^{n}$. Les $J(n)$ sont injectifs, caractérisés par $\ho{\u}{M}{J(n)}\cong \left( M^{i}\right)^{*}.$ Soit $V$ un $2-$groupe abélien élémentaire, la cohomologie de l'espace classifiant $BV,$ que l'on note  $\h^{*}V$ (on note $\tilde{\h}^{*}V$ pour la cohomologie réduite), est injectif dans la catégorie $\u$ d'après un résultat de Miller dans sa preuve de la conjecture de Sullivan \cite{Mil84}. Les travaux de Franjou, Lannes et Schwartz, \cite[théorème 3.1]{LS89},\cite[théorème 7.3]{FLS94} permettent de démontrer:
\begin{namedcor}{\ref{lemmeimp}}
Pour tout $r\in \mathbb{N}$, le module $I^{r}$ se décompose en somme directe $R^{r}\oplus N^{r}$ où $R^{r}$ est un facteur direct d'une certaine somme directe $\bigoplus_{\alpha}\h^{*}\left( BV_{\alpha};\f2\right)$ et $N^{r}$ est une somme directe finie de modules de Brown-Gitler.
\end{namedcor}
Puisque le groupe $\ho{\u}{J(n)}{\h^{*}V}$ est trivial, la suite  $\left(N^{r},\partial^{r}|_{N^{r}} \right)^{r\geq 0}$ est un sous-complexe de la résolution injective minimale de $F(1)$. Le calcul de la cohomologie de ce sous-complexe dépend de la cohomologie de MacLane des corps finis \cite{FLS94}.
\begin{namedcor}{\ref{calcul1}}
Soit $r=2^{k}(2l+1)$, alors:
$$\h^{r}\left( N^{\bullet},\partial^{\bullet}|_{N^{\bullet}}\right)\cong \frac{F(1)}{\Phi^{k} F(1)}.$$
\end{namedcor}
Soient $k>l\geq 2$. On observe que si $t$ est un entier tel que $t<2^{l}$ alors 
$$\h^{t+2^{l}}\left( N^{\bullet},\partial^{\bullet}|_{N^{\bullet}}\right)\cong \h^{t+2^{k}-2^{l}}\left( N^{\bullet},\partial^{\bullet}|_{N^{\bullet}}\right).$$
Cette périodicité particulière de la cohomologie $\h^{*}\left( N^{\bullet},\partial^{\bullet}|_{N^{\bullet}}\right)$ se relève au complexe $\left( N^{\bullet},\partial^{\bullet}|_{N^{\bullet}}\right)^{r\geq 0}$.
\begin{namedthm}{\ref{princh42}
}
Sous un même hypothèse sur $k,l$ et $t$, il y a des isomorphismes:
$$N^{t+2^{l}}\cong N^{t+2^{k}-2^{l}}.$$
\end{namedthm}

Afin d'énoncer le résultat sur le sous-complexe $\left( N^{\bullet},\partial^{\bullet}|_{N^{\bullet}}\right)$ de la résolution injective minimale de $F(1)$, posons:
\begin{align*}
J(n_1,\ldots,n_k) :=\dsum{i=1}{k}{J(n_k)},
\end{align*}
et
\begin{align*}
A_{2^{n}+3}:=\left(\bigoplus_{i=1}^{n-3}J\left(2^{n-1}-2^{i}\right)\right)\bigoplus \left(\bigoplus_{\substack{1\leq i\leq n-2\\0\leq j\leq i-2}}^{}J\left(2^{n-1}-2^{i}-2^{j}\right)\right).
\end{align*}
\begin{namedthm}{\ref{princh43}}
Pour $n\geq 6$, on a:
	\begin{center}
		\fontsize{9pt}{11pt}\selectfont
		\begin{longtable}{|c||*{7}{c|}}
			\hline
			\rowcolor{Gray}\makebox[0.7em]{$k$}& \makebox[0.7em]{$2^{n}-32$}& \makebox[0.7em]{$2^{n}-31$}& \makebox[0.7em]{$2^{n}-30$}& \makebox[0.7em]{$2^{n}-29$}&\makebox[2.3em]{$2^{n}-28$}\\\hline
			$N^{k}$ & $0$& $J(16)$& $J(15,14,12)$ &$J(14,12,11,8)$ & $J(13,10,5)$\\\hline\hline
			\rowcolor{Gray} \makebox[0.7em]{$k$}&\makebox[2.3em]{$2^{n}-27$}& \makebox[0.7em]{$2^{n}-26$}& \makebox[0.7em]{$2^{n}-25$}& \makebox[0.7em]{$2^{n}-24$}& \makebox[0.7em]{$2^{n}-23$} \\\hline
			$N^{k}$& $J(12,4,3)$ & $J(11,6,3)$& $J(10,2)$& $J(9)$& $J(8)$\\\hline
			\rowcolor{Gray}$k$ & \makebox[0.7em]{$2^{n}-22$}& \makebox[0.7em]{$2^{n}-21$}& $2^{n}-20$& $2^{n}-19$& $2^{n}-18$\\\hline
			$N^{k}$ & $J(7,6)$ & $J(6,4)$& $J(5)$& $J(4)$& $J(3)$\\\hline\hline
			\rowcolor{Gray}$k$& $2^{n}-17$& $2^{n}-16$& $2^{n}-15$& $2^{n}-14$& $2^{n}-13$\\ \hline
			$N^{k}$& $J(2)$& $0$&$J(8)$ & $J(7,6)$ & $J(6,4)$\\\hline
			\rowcolor{Gray}$k$& $2^{n}-12$& $2^{n}-11$& $2^{n}-10$& $2^{n}-9$& $2^{n}-8$\\ \hline
			$N^{k}$& $J(5)$ & $J(4)$ & $J(3)$ & $J(2)$ & $0$\\\hline
			\rowcolor{Gray}$k$& $2^{n}-7$& $2^{n}-6$& $2^{n}-5$& $2^{n}-4$& $2^{n}-3$\\ \hline
						$N^{k}$& $J(4)$& $J(3)$&$J(2)$ & $0$ & $J(2)$\\\hline
						\rowcolor{Gray}$k$& $2^{n}-2$& $2^{n}-1$& $2^{n}$& \multicolumn{2}{c|}{$2^{n}+1$}\\ \hline
						$N^{k}$& $0$ & $J(1)$ & $0$ & \multicolumn{2}{c|}{$J(2^{n-1})$}\\\hline
	\rowcolor{Gray}$k$& \multicolumn{3}{c|}{$2^{n}+2$}  & \multicolumn{2}{c|}{$2^{n}+3$}   \\ \hline
							$N^{k}$& \multicolumn{3}{c|}{$J(2^{n-1}-1,2^{n-1}-2,\ldots,2^{n-1}-2^{n-3})$} &  \multicolumn{2}{c|}{$J(2^{n-2})\oplus A_{2^{n}+3}$} \\\hline
		\end{longtable}
	\end{center}
\end{namedthm}

On observe que dans la zone où on peut expliciter $N^{k}$, les modules de Brown-Gitler du type $J(2^{n})$ sont répartis de la manière suivante: chaque module $N^{2l+1}$ contient un seul facteur de ce type et les modules $N^{2l}$ n'en contiennent aucun. Cette particularité implique:
\begin{namedpro}{\ref{calculessentiel}}
Soit $l$ un entier. Si il existe un entier $n_{l}$ tel que  $$N^{2l+1}=J(2^{n_{l}})\oplus
\bigoplus_{\alpha}J\left( 2^{m_{\alpha}}(2t_{\alpha}+1)\right)$$ et que $N^{2l}$ ne contient
aucun facteur direct de type $J(2^{n})$ alors on a:
$$\
\ex{\u}{2l}{\Phi^{k}F(1)}{F(1)}\cong\left\{\begin{array}{cl}
0 &\text{ si } k\leq n_{l},\\
\f2&\text{ si } k> n_{l}.
\end{array}\right.
$$
De plus, les morphismes $$\ex{\u}{2l}{\Phi^{k}F(1)}{F(1)}\to \ex{\u}{2l}{\Phi^{k+1}F(1)}{F(1)}$$
sont injectifs.
\end{namedpro}
\subsection*{Plan de l'article} Dans la section 2 on rappelle des généralités sur les modules instables injectifs et considère la partie réduite de la résolution injective minimale du module $F(1)$. On étudie la partie nilpotente de cette résolution dans la section 3, on y montre que chaque terme de cette partie est somme directe finie de modules de Brown-Gitler. On découvre un phénomène de périodicité de la partie nilpotente dans la section 4. La section 5 est consacrée pour déterminer les extensions de la partie nilpotente par la partie réduite. Les cas particuliers de la partie nilpotente se calculent dans la section 6. La dernière section a pour but de calculer les groupes d'extensions des modules instables et étudier les effets de la torsion de Frobenius sur eux. Une conjecture sur ces effets est donnée à la fin de l'article.
\subsection*{Remerciements} Ce travail fait partie de ma thèse de doctorat effectuée à l'Université Paris 13 sous la direction de Lionel Schwartz. Je tiens à remercier Professeur Lionel Schwartz pour sa générosité, son soutien constant, ses précieux conseils, sa patience et ses exigences qui m'ont toujours apporté autant humainement que scientifiquement. Mes remerciements vont également à Vincent Franjou. Ses remarques m'ont permis d'envisager ce travail sous un autre angle. J'aimerais profiter de cette occasion pour remercier les membres du VIASM pour leurs hospitalité pendant ma visite à Hanoi où l'article a été baptisé. 
\section{Modules instables injectifs}
La lettre $p$ désigne un nombre premier. Dans le cadre de cet article, nous nous intéressons au cas $p=2$. 
\subsubsection*{L'algèbre de Steenrod}
L'algèbre de Steenrod $\a2$ est une algèbre graduée associative engendrée par les $Sq^{k}$ de degré $k\geq 0$ soumis aux relations d'Adem et $Sq^{0}=1$. Serre introduit les notions d'admissible et d'excès \cite{Ser53} pour les opérations de Steenrod. Le monôme $$Sq^{i_{1}}\ldots Sq^{i_{m}}$$
est admissible si $i_{k}\geq 2 i_{k+1}$ pour $2\leq k\leq m$ et $i_{m}\geq 1$. L'excès de cette opération est défini par:
$$e(Sq^{i_{1}}\ldots Sq^{i_{m}}):=i_{1}-i_{2}-\cdots-i_{m}.$$
L'ensemble des monômes admissibles et $Sq^{0}$ forment une base additive de l'algèbre de Steenrod. Milnor \cite{Mil58} montre que $\a2$ a un co-produit naturel qui fait d'elle une algèbre de Hopf et incorpore l'involution de Thom comme la conjugaison. 

Les $Sq^{2^{k}}$ sont indécomposables et on désigne par $\mathcal{A}(n)$ la sous-algèbre de $\a2$ engendrée par les $Sq^{2^{i}},i\leq 2^{n}$. Les $\mathcal{A}(n)$ sont finis d'après \cite[section 5]{Ada58}. En fait, toute sous-algèbre, engendrée par un ensemble fini d'opérations de Steenrod, est finie. Il existe un ensemble minimal de relations définissant $\a2$, concernant seuls les $Sq^{2^{n}}$ \cite{Wal60}.
\begin{align*}
Sq^{2^{i}}Sq^{2^{j}}&\equiv Sq^{2^{j}}Sq^{2^{i}} \textup{ modulo } \mathcal{A}(i-1), \textup{ si } 0\leq j\leq i-2,\\
Sq^{2^{i}}Sq^{2^{i}}&\equiv Sq^{2^{i-1}}Sq^{2^{i}}Sq^{2^{i-1}}+Sq^{2^{i-1}}Sq^{2^{i-1}}Sq^{2^{i}} \textup{ modulo } \mathcal{A}(i-1).
\end{align*}
Pour $n\geq k$, on désigne par $Q^{n}_{k}$ le produit
$$Sq^{2^{k}}Sq^{2^{k+1}}\ldots Sq^{2^{n}}.$$
L'ensemble des monômes $Q^{n_{0}}_{k_{0}}Q^{n_{1}}_{k_{1}}\ldots Q^{n_{i}}_{k_{i}}$
où $(n_{j},k_{j})$ sont en ordre lexicographique
$$(n_{j},k_{j})<(n_{j-1},k_{j-1}) \textup{ pour tout }1\leq j\leq i,$$
forme la base de Wall de l'algèbre de Steenrod \cite{Wal60}.
\subsubsection*{Modules instables}
Un $\a2-$module $M$ est dit instable si l'action de $Sq^{k}$ sur un élément homogène $x$ de degré $n$ est triviale dès que $k>n$. La catégorie des modules instables est désignée par $\u$. 

On désigne par $|-|$ le degré d'un élément. Soit $\mathrm{Sq}_{0}$ l'opération qui, à un élément homogène $x$ d'un module instable, associe l'élément $Sq^{|x|}$. La torsion de Frobenius dans $\u$ est un endofoncteur $\Phi$ qui, à un module instable $M$ associe le module $\Phi M$, concentré en degrés pairs et $\left( \Phi M\right)^{2n}=M^{n}.$ Posant 
\begin{align*}
\lambda_{M}:\Phi M&\to M\\
\Phi x&\mapsto \mathrm{Sq}_{0}x
\end{align*}
alors un module instable $M$ est dit réduit si $\lambda_{M}$ est injectif. Il est dit nilpotent si pour chaque élément $x\in M$, il existe un entier $n_{x}$ tel que $$\mathrm{Sq}_{0}^{n_{x}}x=0.$$ 
On désigne par $\nil$ la sous catégorie des modules nilpotents.

Notons $J(n)$ l'enveloppe injective de la cohomologie réduite $\tilde{\h}^{*}\left(S^{n};\f2\right)$ du sphère $S^{n}$. Les $J(n)$ sont injectifs, caractérisés par $\ho{\u}{M}{J(n)}\cong \left( M^{n}\right)^{*}.$ Ces modules sont finis et donc nilpotents. Dualement, il y a des $F(n)$, instablement et librement engendré par $\imath_{n}$ de degré $n$. Les $F(n)$ sont projectifs, caractérisés par $\ho{\u}{F(n)}{M}\cong M^{n}$. Le module $F(1)$ s'injecte dans la cohomologie $$\h^{*}\z/2\cong \f2[u]$$
donc le générateur de $F(1)$ est souvent désigné par $u$ au lieu de $\imath_{1}$.

Carlsson et Miller ont observé que la cohomologie modulo $p$ d'un $p-$groupe abélien élémentaire est injective dans $\u$ \cite{Car83,Mil84}. Lannes et Zarati ont démontré que le produit tensoriel d'un tel module avec un module de Brown-Gitler reste injectif \cite{LZ86} et cela donne:
\begin{theo}[{\cite[théorème 3.1]{LS89}}]\label{ls89}
Chaque module injectif indécomposable de la catégories $\u$ est un produit tensoriel $L\otimes J(n)$ entre un facteur direct indécomposable de $(\h^{*}B\z/p)^{\otimes d}$ et un module de Brown-Gitler.
Un module instable injectif est la somme directe des modules de ces types.
\end{theo}
Ce théorème signifie qu'un module instable injectif se décompose en somme directe entre un module réduit et un module nilpotent. Pour chaque $n$, on désigne par $x_{n}$ le seul générateur de $\left(J(2^{n})\right)^{1}$. Les modules de Brown-Gitler ont explicités par Miller: 
\begin{theo}[{\cite[théorème 6.1]{Mil84}}]
Pour $p=2$, il y a un isomorphisme d'algèbres bi-graduées: $$\dsum{n\geq 0}{}{J(n)}\xrightarrow{\sim}\f2[x_{n},n\geq 0, ||x_{n}||=(1,2^{n})].$$
L'action de l'algèbre de Steenrod est définie par $Sq^{1}x_{n}=x_{n-1}^{2}$ et par la formule de Cartan:
$$Sq^{m}(xy)=\sum_{i=0}^{m}Sq^{i}(x)Sq^{m-i}y.$$
\end{theo}
\subsection{La réduction du travail}
Posons:
$$
\h_{r}=\frac{F(1)}{\Phi^{r}F(1)}.
$$ 
De manière récursive, on définit  pour $k\geq 2$: $$\lambda_{M}^{k}:=\lambda_{M}\circ \Phi\left(\lambda_{M}^{k-1}\right):\Phi^{k}M\to M.$$ Un module instable $M$ est dit connexe si il est nul en degré $0$. 

\begin{prop}\label{phi}
Soient $M$ un module instable connexe et $r$ un nombre entier. Alors le morphisme $\lambda_{F(1)}^{r}$ induit un isomorphisme 
\begin{equation}\label{eqk1}
\ex{\u}{*}{\Phi^{r}M}{\Phi^{r}F(1)}\cong\ex{\u}{*}{\Phi^{r}M}{F(1)}.
\end{equation}
De plus
\begin{equation}\label{eqk2}
\left(\lambda_{\Phi^{r}M}\right)^{*}\circ \left(\lambda^{r}_{F(1)}\right)_{*} =\left(\lambda^{r+1}_{F(1)}\right)_{*}\circ \ex{\u}{*}{\Phi}{\Phi}.
\end{equation}
\end{prop}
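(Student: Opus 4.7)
Le plan est d'établir d'abord \eqref{eqk1} par réduction à une annulation auxiliaire, puis \eqref{eqk2} par un argument de naturalité. Puisque $F(1)$ est réduit, $\lambda^{r}_{F(1)}:\Phi^{r}F(1)\to F(1)$ est injectif de conoyau $\h_{r}$. La suite exacte courte
\[
0\to \Phi^{r}F(1)\xrightarrow{\lambda^{r}_{F(1)}} F(1)\to \h_{r}\to 0
\]
induit une suite exacte longue d'Ext depuis $\Phi^{r}M$, d'où l'équivalence entre l'isomorphisme \eqref{eqk1} et l'annulation $\ex{\u}{*}{\Phi^{r}M}{\h_{r}}=0$.

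Le module $\h_{r}$ est fini, nilpotent, et concentré dans les degrés $\{1,2,4,\ldots,2^{r-1}\}\subset [1,2^{r}-1]$. D'après le théorème \ref{ls89}, les injectifs indécomposables nilpotents de $\u$ sont les modules de Brown-Gitler $J(n)$, à support dans $[0,n]$. Construisons la résolution injective minimale $\h_{r}\to I^{\bullet}$ par prise de socles successifs: chaque terme $I^{k}$ sera une somme directe de $J(n_{j}^{k})$ avec $n_{j}^{k}\leq 2^{r-1}$. En effet, si un sous-quotient est à support $\leq 2^{r-1}$, alors son socle l'est aussi et son enveloppe injective ne contient que des $J(n)$ avec $n\leq 2^{r-1}$, d'où le support du sous-quotient suivant reste borné par $2^{r-1}$. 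Pour tout $J(n)$ avec $n<2^{r}$, on a $\ex{\u}{i}{\Phi^{r}M}{J(n)}=0$ pour $i>0$ (par injectivité), et $\ho{\u}{\Phi^{r}M}{J(n)}\cong (\Phi^{r}M)^{n\,*}=0$ car $\Phi^{r}M$ est concentré en degrés multiples de $2^{r}$ alors que $0<n<2^{r}$. L'annulation en résulte.

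L'équation \eqref{eqk2} se déduit de la naturalité de $\lambda:\Phi\Rightarrow \mathrm{id}_{\u}$ et de l'identité $\lambda^{r+1}_{N}=\lambda^{r}_{N}\circ \lambda_{\Phi^{r}N}$, équivalente à la définition récursive via naturalité. Soit $\xi$ un représentant dans la catégorie dérivée d'une classe de $\ex{\u}{*}{\Phi^{r}M}{\Phi^{r}F(1)}$. La naturalité appliquée à $\xi$ donne $\lambda_{\Phi^{r}F(1)}\circ \Phi \xi=\xi\circ \lambda_{\Phi^{r}M}$, puis en composant par $\lambda^{r}_{F(1)}$ à gauche:
\[
\lambda^{r+1}_{F(1)}\circ \Phi \xi = \lambda^{r}_{F(1)}\circ \xi \circ \lambda_{\Phi^{r}M},
\]
ce qui traduit exactement \eqref{eqk2}. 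L'obstacle principal réside dans la deuxième étape: s'assurer que la résolution injective minimale de $\h_{r}$ ne comporte aucun facteur $J(n)$ avec $n\geq 2^{r}$, ce qui revient à contrôler le support des sous-quotients successifs tout au long de la résolution.
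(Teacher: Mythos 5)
Your proof is correct and follows essentially the same route as the paper: \eqref{eqk1} is reduced, via the short exact sequence $0\to\Phi^{r}F(1)\to F(1)\to\h_{r}\to 0$, to the vanishing of $\ex{\u}{*}{\Phi^{r}M}{\h_{r}}$, and \eqref{eqk2} comes from the naturality of $\lambda$ together with the identity $\lambda^{r+1}_{F(1)}=\lambda^{r}_{F(1)}\circ\lambda_{\Phi^{r}F(1)}$ (itself a consequence of naturality applied to the recursive definition). The main difference is that the paper merely \emph{asserts} the vanishing of $\ex{\u}{i}{\Phi^{r}M}{\h_{r}}$, whereas you prove it by showing that every term of the minimal injective resolution of $\h_{r}$ is a finite sum of Brown--Gitler modules $J(n)$ with $1\leq n\leq 2^{r-1}<2^{r}$, while $\Phi^{r}M$ is concentrated in degrees divisible by $2^{r}$; this is a genuine and welcome addition. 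One inaccuracy to repair: Theorem \ref{ls89} does not say that the indecomposable nilpotent injectives are the $J(n)$ — they are the $L\otimes J(n)$ with $n\geq 1$ and $L$ a reduced indecomposable summand of some $(\h^{*}B\z/2)^{\otimes d}$, and such $L$ may be infinite. Your induction nevertheless goes through, because the injective hull of a \emph{finite} module cannot contain an infinite indecomposable summand: the intersection of the finite module with such a summand would have to be essential in it, yet any cyclic submodule generated in degree above $\de{\h_{r}}=2^{r-1}$ meets it trivially. With that one-line repair, only the $J(n)$ with $n\leq 2^{r-1}$ can occur at each stage and your argument is complete.
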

\begin{proof}
L'isomorphisme \eqref{eqk1} se résulte des remarques suivantes:
\begin{enumerate}
\item les groupes $\ex{\u}{i}{\Phi^{r}M}{\h_{r}}$ sont triviaux;
\item le module $\h_{r}$ s'insère dans la suite exacte
$$0\to \Phi^{r}F(1)\to F(1)\to\h_{r}\to 0.$$
\end{enumerate}
L'identité \eqref{eqk2} se découle de ce que le morphisme $\ex{\u}{*}{\lambda_{\Phi^{r}M}}{\Phi^{r}F(1)}$ se factorise à travers $\ex{\u}{*}{\Phi^{r+1}M}{\lambda_{\Phi^{r}F(1)}}$ \textit{via} $\ex{\u}{*}{\Phi}{\Phi}$. 
\end{proof}
A l'aide de la proposition \ref{phi}, on se ramène à étudier la résolution injective minimale du module $F(1)$ et les morphismes $\left(\lambda_{\Phi^{n-1}F(1)}\right)^{*}$.
\begin{coro} Soit $M$ un module instable connexe. Il existe un isomorphisme entre les co-limites:
$$
\mathrm{colim}_{n}\ex{\u}{*}{\Phi^{n}M}{F(1)}\cong \mathrm{colim}_{n}\ex{\u}{*}{\Phi^{n}M}{\Phi^{n}F(1)}.
$$
De plus les $\ex{\u}{*}{\Phi}{\Phi}$ sont des monomorphismes si et seulement si $\left(\lambda_{\Phi^{n-1}M}\right)^{*}$ les sont.
\end{coro}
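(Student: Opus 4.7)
The plan is to deduce both assertions directly from Proposition \ref{phi}, by repackaging it as an isomorphism of directed systems and then passing to the colimit; there is essentially no content beyond reindexing.

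First I would fix $n\geq 0$ and note that the isomorphism \eqref{eqk1} applied with $r=n$ provides
$$
(\lambda^n_{F(1)})_* \colon \ex{\u}{*}{\Phi^{n}M}{\Phi^{n}F(1)} \xrightarrow{\sim} \ex{\u}{*}{\Phi^{n}M}{F(1)},
$$
while the identity \eqref{eqk2} with $r=n$ reads
$$
(\lambda_{\Phi^{n}M})^* \circ (\lambda^n_{F(1)})_* \;=\; (\lambda^{n+1}_{F(1)})_* \circ \ex{\u}{*}{\Phi}{\Phi}.
$$
Hence the family $\{(\lambda^n_{F(1)})_*\}_{n\geq 0}$ is a morphism of directed systems from $\bigl(\ex{\u}{*}{\Phi^{n}M}{\Phi^{n}F(1)},\,\ex{\u}{*}{\Phi}{\Phi}\bigr)_{n}$ to $\bigl(\ex{\u}{*}{\Phi^{n}M}{F(1)},\,(\lambda_{\Phi^{n}M})^*\bigr)_{n}$, whose components are all isomorphisms; it is therefore an isomorphism of directed systems. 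Taking colimits yields the isomorphism announced in the corollary.

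For the ``moreover'' clause, I would use the same commuting square. Since the vertical arrows $(\lambda^n_{F(1)})_*$ and $(\lambda^{n+1}_{F(1)})_*$ are bijective at every stage $n$, the top arrow $\ex{\u}{*}{\Phi}{\Phi}$ is injective if and only if the bottom arrow $(\lambda_{\Phi^{n}M})^*$ is, and one may translate between the two indexings freely. There is no genuine obstacle, as Proposition \ref{phi} has already absorbed the substantive argument (the vanishing of $\ex{\u}{*}{\Phi^{n}M}{\h_{n}}$ and the short exact sequence defining $\h_n$); what remains is the purely formal assembly described above.
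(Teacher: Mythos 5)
Your argument is correct and coincides with what the paper intends: the corollary is stated without proof as an immediate consequence of Proposition \ref{phi}, and the formal assembly you describe (the squares from \eqref{eqk2} with vertical isomorphisms from \eqref{eqk1} give an isomorphism of directed systems, hence of colimits, and injectivity transfers across the levelwise isomorphisms) is exactly the intended reasoning. The only point to watch is the index shift between $\left(\lambda_{\Phi^{n}M}\right)^{*}$ and $\left(\lambda_{\Phi^{n-1}M}\right)^{*}$ in the statement, which you correctly note is harmless.
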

On désigne par $(I^{\bullet},\partial^{\bullet})$ la résolution injective minimale de $F(1)$. Chaque module $I^{j}$ se scinde en somme directe $R^{j}\oplus N^{j}$: $R^{j}$ est réduit et $N^{j}$ est nilpotent \cite[théorème 3.1]{LS89}. Puisque
$
\ho{\u}{N^{i}}{R^{i+1}}\textup{ est nul pour tout }i\geq 0,
$ les morphismes restreints $\partial^{\bullet}|_{N^{\bullet}}$ font de la suite $\left(N^{r},\partial^{r}|_{N^{r}} \right)^{r\geq 0}$ un sous-complexe de la résolution injective minimale de $F(1)$. Désormais $\partial^{l}|_{N^{l}}:N^{l}\to N^{l+1}$ est noté par $\partial^{l}_{n}$. On peut exprimer les différentielles $\partial^l$ sous forme matricielle:
\begin{equation}\
\left(\begin{matrix}
\partial^{l}_{n} & \omega^{l}\\
0 & \partial^{l}_{r}
\end{matrix}\right):N^{l}\oplus R^{l}\to N^{l+1}\oplus R^{l+1}. 
\end{equation}
Désigne par $\tilde{\Phi}$ l'adjoint à droite du foncteur $\Phi$. Comme les $R^{i}$ sont injectifs réduits, alors  $\tilde{\Phi}R^{i}\cong R^{i}$ \cite[théorème 6.3.4]{Sch94}. On a des isomorphismes:
\begin{align*}
\ho{\u}{\Phi^{n}F(1)}{I^{m}}&\cong \ho{\u}{\Phi^{n}F(1)}{N^{m}\oplus R^{m}}\\
					 		&\cong \left(\tilde{\Phi}^{n}N^{m}\right)^{1}\oplus \left(R^{m}\right)^{1}.
\end{align*}
Par conséquent, il suffit de considérer la partie $\left(N^{\bullet},\partial_{n}^{\bullet}\right)$ et seul le degré $1$ de la partie $\left(R^{\bullet},\partial_{r}^{\bullet}\right)$ pour calculer les groupes $\ex{\u}{*}{\Phi^{n}F(1)}{F(1)}$. 
\subsection{L'homologie du complexe $\bm{(R^{\bullet},\partial^{\bullet}_r)}$}
D'après \cite[I.7]{HLS93} le foncteur $f:\u\to\F_{\omega}$ admet un adjoint à droite que l'on note $m$. En particulier, $f\circ m$ est équivalent au endo-foncteur $id_{\F_{\omega}}$.
\begin{notat} On note $\ell$ le foncteur composé $m\circ f$. Il est appelé foncteur de localisation loin de $\nil$.
\end{notat}
Un module instable $M$ est dit $\nil-$fermé si $\ex{\u}{i}{N}{M}$ est nul pour tout module nilpotent $N$ et pour $i=0,1$.
\begin{prop}[{\cite[I.7]{HLS93}}]
Le module $\ell(M)$ est $\nil-$fermé pour tout module instable $M$. De plus si $M$ est $\nil-$fermé, $M\xrightarrow{\sim}\ell(M)$.
\end{prop}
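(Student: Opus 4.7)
The plan is to exploit the adjunction $f\dashv m$ together with the two facts recalled just before the statement: $f$ is exact with kernel $\nil$, and the counit $fm\Rightarrow\mathrm{id}_{\F_{\omega}}$ is an isomorphism. From these one gets immediately that $\ell = mf$ is idempotent and that for every $M$ the unit $\eta_M:M\to\ell(M)$ becomes an isomorphism after applying $f$.

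For the first assertion, let $N\in\nil$. The adjunction gives
\[
\ho{\u}{N}{\ell(M)} = \ho{\u}{N}{mf(M)} \cong \ho{\F_{\omega}}{f(N)}{f(M)} = 0
\]
since $f(N)=0$. For the vanishing of $\ex{\u}{1}{N}{\ell(M)}$, I would take an arbitrary extension $0\to \ell(M)\to E\to N\to 0$, apply the exact functor $f$ to get $f(E)\cong f(\ell(M))=f(M)$, and then use the unit of the adjunction to produce a map $E\to mf(E)\cong mf(\ell(M))\cong \ell(M)$; by construction this composite retracts the inclusion $\ell(M)\hookrightarrow E$, so the sequence splits.

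For the second assertion, I would look at $\eta_M:M\to\ell(M)$ and its kernel $K$ and cokernel $C$. Exactness of $f$ and the triangle identity show that $f\eta_M$ is the identity of $f(M)$; hence $f(K)=f(C)=0$, i.e.\ both $K$ and $C$ lie in $\nil$. The $\nil$-closure hypothesis on $M$ gives $\ho{\u}{K}{M}=0$, so the inclusion $K\hookrightarrow M$ is zero and $\eta_M$ is injective. The same hypothesis gives $\ex{\u}{1}{C}{M}=0$, so the short exact sequence $0\to M\to\ell(M)\to C\to 0$ splits, i.e.\ $\ell(M)\cong M\oplus C$. Invoking the first part, already proved, $\ell(M)$ is itself $\nil$-fermé, hence $\ho{\u}{C}{\ell(M)}=0$; but this Hom contains the canonical inclusion $C\hookrightarrow\ell(M)$, forcing $C=0$.

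The main subtlety is ensuring that $K$ and $C$ genuinely lie in $\nil$: this depends on combining exactness of $f$ with the fact that the counit $fm\cong\mathrm{id}$ makes $f\eta_M$ an isomorphism through the triangle identity. Once this observation is in place, both parts reduce to the two adjunction computations above, and no further structural input on $\F_{\omega}$ is needed.
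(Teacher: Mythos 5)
Your argument is correct. Note first that the paper gives no proof of this statement: it is quoted verbatim from \cite[I.7]{HLS93}, so there is nothing internal to compare against. Your derivation is a legitimate self-contained reconstruction from the adjunction data. The computation $\ho{\u}{N}{\ell(M)}\cong\ho{\F_{\omega}}{f(N)}{f(M)}=0$, the splitting of any extension $0\to\ell(M)\to E\to N\to 0$ via the unit $E\to mf(E)\cong\ell(M)$ (the composite with the inclusion being $\eta_{\ell(M)}$ up to the isomorphism $mf(\iota)$, hence invertible by the triangle identity), and the kernel--cokernel analysis of $\eta_M$ in the second part are all sound. The one point worth making explicit is that you use \emph{both} directions of the statement $\ke{f}=\nil$: the first part only needs $f(N)=0$ for $N$ nilpotent, but identifying $K=\ke{\eta_M}$ and $C=\cke{\eta_M}$ as nilpotent from $f(K)=f(C)=0$ requires the converse, namely that $f$ kills \emph{only} the nilpotent modules. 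This is part of the \cite{HLS93} package ($\F_{\omega}$ being the quotient of $\u$ by $\nil$) but it is not among the two facts the paper recalls (exactness of $f$ and $f\circ m\cong\mathrm{id}$), so it should be cited rather than treated as following from them.
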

Puisque les modules injectifs réduits sont $\nil-$fermés, alors
\begin{coro}\label{reduitf1}
On a une identification des complexes: $$
\ell(I^{\bullet},\partial^{\bullet})\cong (R^{\bullet},\partial^{\bullet}_r).
$$ L'homologie du complexe $(R^{\bullet},\partial^{\bullet}_r)$ est isomorphe au dérivé $\ell^{*}(F(1))$.
\end{coro}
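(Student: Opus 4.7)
Le plan consiste à traiter séparément les deux affirmations du corollaire. Pour l'identification des complexes, j'analyserai l'action de $\ell = m\circ f$ sur chaque facteur de la décomposition $I^r \cong N^r \oplus R^r$ fournie par le théorème \ref{ls89}. D'une part, tout module nilpotent est annulé par le foncteur quotient $f:\u\to\F_{\omega}$, d'où $\ell(N^r) = m(f(N^r)) = m(0) = 0$. D'autre part, les injectifs réduits $R^r$ sont $\nil$-fermés (paragraphe précédent), et la proposition de \cite[I.7]{HLS93} entraîne alors $\ell(R^r)\cong R^r$. Puisque l'expression matricielle de $\partial^l$ a un bloc inférieur-gauche nul, l'application de $\ell$ élimine les lignes et colonnes nilpotentes pour ne retenir que le bloc $\partial^l_r$, ce qui fournit l'isomorphisme de complexes voulu.

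Pour la partie cohomologique, je vérifierai ensuite que $\ell$ est exact à gauche, afin d'interpréter $\ell^*$ comme ses foncteurs dérivés à droite $R^*\ell$. Cette exactitude à gauche résulte de ce que $\ell$ est la composition du foncteur quotient de Serre $f$, exact par construction, avec son adjoint à droite $m$, automatiquement exact à gauche. Puisque $(I^\bullet,\partial^\bullet)$ est une résolution injective de $F(1)$, la définition même des foncteurs dérivés donne
$$\ell^*(F(1)) \cong \h^*\bigl(\ell(I^\bullet,\partial^\bullet)\bigr),$$
et la première partie identifie ce terme à $\h^*(R^\bullet,\partial^\bullet_r)$.

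Le résultat est essentiellement formel et aucune étape ne devrait présenter d'obstacle sérieux, toutes les briques nécessaires ayant déjà été posées: la décomposition réduit-nilpotent des injectifs de $\u$, la caractérisation des modules $\nil$-fermés et les propriétés de l'adjonction $(f,m)$ issues de \cite{HLS93}. Le seul point de vigilance est d'ordre notationnel, à savoir s'assurer que la notation $\ell^*$ de l'énoncé coïncide bien avec $R^*\ell$; ce n'est qu'une question de convention.
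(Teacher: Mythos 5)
Votre démonstration est correcte et suit exactement le chemin que l'article laisse implicite : le papier énonce ce corollaire comme conséquence immédiate de la proposition de \cite[I.7]{HLS93} (les injectifs réduits sont $\nil$-fermés, donc fixés par $\ell$, tandis que $f$ annule la partie nilpotente), et l'interprétation de $\ell^{*}$ comme foncteur dérivé à droite, justifiée par l'exactitude à gauche de $\ell=m\circ f$, est précisément celle utilisée dans la preuve du théorème qui suit dans le texte. Vous ne faites qu'expliciter ces étapes, ce qui est fidèle à l'argument voulu.
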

La description de $\ell^{*}(F(1))$ que l'on donne dans la suite est une récolte des travaux présentés dans \cite{FLS94} (voir aussi \cite{Sch03}).
\begin{theo}
Étant donnés $M$ dans $\u$ et $F$ dans $\F_{\omega}$, il existe une suite spectrale du premier quadrant 
\begin{equation}\label{Grospec}
\ex{\u}{i}{M}{\ell^j(m(F))}\Rightarrow\ex{\F}{i+j}{f(M)}{F}.
\end{equation}
\end{theo}
\begin{proof}
On considère la paire des foncteurs 
$$
\u\xrightarrow{\ell}\u\xrightarrow{\ho{\u}{M}{-}}\v_{\f2}.
$$
Le foncteur $f$ est exact. Puisque le foncteur $m$ est exact à gauche, le foncteur $\ell$ l'est aussi. Dans la mesure où $m$ et $f$ préservent l'injectivité des objets, il en est de même pour $\ell$. On déduit de la suite spectrale de Grothendieck associée à la paire $\left\{\ell,\ho{\u}{M}{-}\right\}$ qu'il y a une suite spectrale du premier quadrant convergeant vers
\begin{align*}
\rd{i+j}{\ho{\u}{M}{\ell(-)}}&\cong\rd{i+j}{\ho{\u}{f(M)}{f(-)}}\\
&\cong\ex{\F}{i+j}{f(M)}{f(-)}
\end{align*}
dont la deuxième page est 
$$
\rd{i}{\ho{\u}{M}{-}}\left(\rd{j}{\ell}\right)\cong\ex{\u}{i}{M}{\ell^j(-)}.
$$
En l'appliquant au module $m(F)$ on obtient la suite spectrale
$$
E_2^{i,j}\cong\ex{\u}{i}{M}{\ell^j(m(F))}\Rightarrow\ex{\F}{i+j}{f(M)}{F}
$$
d'où le résultat.
\end{proof}
On note $I$ le foncteur d'inclusion $V\mapsto V,$ et $\Gamma^{k}$ le foncteur qui, à un espace vectoriel $V$, associe le groupe des invariants $\left(V^{\otimes k}\right)^{\mathfrak{S}_{k}}$ où le groupe symétrique $\mathfrak{S}_{k}$ agit par permutations sur $V^{\otimes k}.$ Puisque $F(k)$ est projectif, la suite spectrale (\ref{Grospec}) pour $(F(k),I)\in \u\times \F_{\omega}$ s'effondre et donne:
\begin{coro}\label{l1}
Il y a un isomorphisme:
$$\ell^i(m(I))^k\cong \ex{\F}{i}{\Gamma^k}{I}.$$
\end{coro}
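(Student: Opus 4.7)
The plan is to specialize the spectral sequence \eqref{Grospec} to the pair $M=F(k)$ and $F=I$, and then exploit the projectivity of $F(k)$ in $\u$. Because the functor $\ho{\u}{F(k)}{-}\cong (-)^{k}$ is exact, $F(k)$ is projective in $\u$, so $\ex{\u}{i}{F(k)}{-}$ vanishes for every $i\geq 1$. Consequently the first-quadrant spectral sequence
$$\ex{\u}{i}{F(k)}{\ell^{j}(m(I))}\;\Longrightarrow\;\ex{\F}{i+j}{f(F(k))}{I}$$
is concentrated on the column $i=0$ and degenerates at $E_{2}$. Its abutment produces, for every $j$, the chain of isomorphisms
$$(\ell^{j}(m(I)))^{k}\cong\ho{\u}{F(k)}{\ell^{j}(m(I))}\cong\ex{\F}{j}{f(F(k))}{I}.$$

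It then remains to identify $f(F(k))$ with $\Gamma^{k}$. By the adjunction $f\dashv m$, for every $G\in\F_{\omega}$ one has
$$\ho{\F_{\omega}}{f(F(k))}{G}\cong\ho{\u}{F(k)}{m(G)}\cong m(G)^{k},$$
so that $f(F(k))$ represents in $\F_{\omega}$ the functor $G\mapsto m(G)^{k}$. That this functor is represented by the divided power $\Gamma^{k}(V)=(V^{\otimes k})^{\mathfrak{S}_{k}}$ is a standard computation of the Henn--Lannes--Schwartz setup, compatible with the explicit formulas used in \cite{FLS94}. Substituting this identification into the previous display yields the stated isomorphism.

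The formal part of the argument — the collapse of the spectral sequence — is immediate from the projectivity of $F(k)$. The essential content lies in the identification $f(F(k))\cong\Gamma^{k}$, which demands unfolding the definition of $f$ via Lannes's $T$-functor in degree zero and matching it with the universal property of $\Gamma^{k}$. Once that identification is accepted, the corollary is simply a rereading of the degenerate spectral sequence constructed in the preceding theorem.
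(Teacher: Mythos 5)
Votre démonstration suit exactement la même voie que celle de l'article : on spécialise la suite spectrale \eqref{Grospec} à la paire $(F(k),I)$, la projectivité de $F(k)$ la fait dégénérer sur la colonne $i=0$, et l'aboutissement donne l'isomorphisme voulu. Le seul complément que vous apportez est d'expliciter l'identification $f(F(k))\cong\Gamma^{k}$ (via l'adjonction $f\dashv m$ et le calcul classique de Henn--Lannes--Schwartz), que l'article laisse implicite ; c'est correct et n'altère en rien l'argument.
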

Le théorème suivant caractérise des relations entre les $\ell^i(m(I))^k$.
\begin{theo}[\cite{FLS94}]\label{l2}
Les groupes $\ex{\F}{i}{\Gamma^{m}}{I}$ sont triviaux si $m$ n'est pas une puissance de $2$ et: 
$$
\ex{\F}{i}{\Gamma^{2^k}}{I}=\left\{\begin{array}{cl}
\f2 & \text{ si } 2^{k+1}|i,\\
0 & \text{ sinon.}\end{array}\right. 
$$
De plus les morphismes
$$
\ex{\F}{i}{\Gamma^{2^{k-1}}}{I}\xrightarrow{f^{*}\left(Sq^{2^{k-1}}\bullet\right)} \ex{\F}{i}{\Gamma^{2^k}}{I}
$$  
sont des isomorphismes si $2^{k+1}|i$.
\end{theo}
\begin{coro}\label{calcul1}
Soit $i=2^n(2k+1)$. Il y a un isomorphisme de modules instable:$$\ell^i(F(1))\cong \frac{F(1)}{\Phi^n F(1)}.$$
\end{coro}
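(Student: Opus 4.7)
The plan is to compute $\ell^i(F(1))$ by first extracting its graded vector space structure from Corollary \ref{l1} and Theorem \ref{l2}, and then matching the Steenrod module structure to that of $F(1)/\Phi^n F(1)$.

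I would begin by observing that $f(F(1)) = I$, a special case of the identification $f(F(d)) = \Gamma^d$ already used in the proof of Corollary \ref{l1}. Applying Corollary \ref{l1} at $i = 0$ together with Theorem \ref{l2} shows that $m(I)$ is one-dimensional in every degree $2^j$ and zero elsewhere — exactly the Poincaré series of $F(1)$. The unit map $F(1) \to m(I) = \ell(F(1))$ of the adjunction $(f,m)$ is therefore an isomorphism, so $F(1)$ is nil-fermé. Consequently $\ell^i(F(1)) \cong \ell^i(m(I))$ for every $i$, and Corollary \ref{l1} yields
$$\ell^i(F(1))^d \cong \ex{\F}{i}{\Gamma^d}{I}.$$

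I would next read off the vector space structure from Theorem \ref{l2}: the right-hand side vanishes unless $d$ is a power of two, and for $d = 2^j$ it equals $\f2$ precisely when $2^{j+1} \mid i$. With $i = 2^n(2k+1)$ as in the hypothesis, the divisibility $2^{j+1} \mid i$ holds if and only if $0 \leq j \leq n-1$. Hence $\ell^i(F(1))$ is one-dimensional in each of the degrees $2^0, 2^1, \ldots, 2^{n-1}$ and zero elsewhere — exactly the Poincaré series of
$$F(1)/\Phi^n F(1) = \langle u, u^2, u^4, \ldots, u^{2^{n-1}}\rangle \subset \f2[u].$$

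To match the Steenrod module structures, I would invoke the second assertion of Theorem \ref{l2}: for $2^{j+1} \mid i$ the map $f^*(Sq^{2^{j-1}}\cdot)$ is an isomorphism $\ex{\F}{i}{\Gamma^{2^{j-1}}}{I} \xrightarrow{\sim} \ex{\F}{i}{\Gamma^{2^j}}{I}$. Under the identification of Corollary \ref{l1} this corresponds to the Steenrod square $Sq^{2^{j-1}}$ acting on $\ell^i(F(1))$ and sending the generator in degree $2^{j-1}$ onto the generator in degree $2^j$ for every $1 \leq j \leq n-1$. On the side of $F(1)/\Phi^n F(1)$ the unstability identity $Sq^{|x|}x = x^2$ applied to $x = u^{2^{j-1}}$ produces the same transition $Sq^{2^{j-1}}(u^{2^{j-1}}) = u^{2^j}$, and all other Steenrod operations vanish on both sides for degree reasons. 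Since $\a2$ is generated by the squares $Sq^{2^t}$, sending $u$ to a chosen generator of $\ell^i(F(1))^1$ extends uniquely to an $\a2$-module isomorphism $F(1)/\Phi^n F(1) \xrightarrow{\sim} \ell^i(F(1))$.

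The principal difficulty is this last step: establishing rigorously that the $\F_\omega$-level multiplication $f^*(Sq^{2^{j-1}}\cdot)$ of Theorem \ref{l2} corresponds, via the Lannes-Schwartz adjunction, to the honest Steenrod operation $Sq^{2^{j-1}}$ on $\ell^i(F(1))$. This rests on the naturality of $(f,m)$ and on the way $f$ transports the $\a2$-action into the divided-power/Frobenius structure on $\F_\omega$ as detailed in \cite{HLS93,FLS94}. Once this compatibility is spelled out, the claimed isomorphism follows.
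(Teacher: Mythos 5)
Your proposal is correct and follows essentially the same route as the paper: the graded vector space structure is read off from Corollary \ref{l1} and Theorem \ref{l2}, the degree-gap argument shows only $Sq^{2^{j-1}}$ can act nontrivially, and the second assertion of Theorem \ref{l2} identifies that action. The compatibility you flag as the principal difficulty is exactly the point the paper settles, by noting that the $Sq^{2^{j-1}}$-action on $\ho{\u}{F(2^{j-1})}{\ell^i(F(1))}$ is computed through the collapsing spectral sequence of \eqref{Grospec} as $f^{*}\left(Sq^{2^{j-1}}\bullet\right)$, induced by the Verchiebung $\Gamma^{2^{j}}\to\Gamma^{2^{j-1}}$.
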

\begin{proof}
L'action de $Sq^i$ sur $M^{n}$ se détermine par l'action de $Sq^i:F(n+i)\to F(n)$ sur $\ho{\u}{F(n)}{M}$. L'isomorphisme
$$
\ell^i(m(I))^k\cong \ex{\F}{i}{\Gamma^k}{I},
$$
combiné avec le théorème \ref{l2}, montre que, en tant qu'espaces vectoriels gradués: 
$$
\ell^i(F(1))\cong \frac{F(1)}{\Phi^n F(1)}\textup{ si } i=2^n(2k+1).
$$
A cause de l'écart de degrés, la seule opération agissant non-trivialement sur  $\left(\ell^i(F(1))\right)^{2^k}$ est $Sq^{2^k}$. A travers la suite spectrale
$$
\ho{\u}{F(k)}{\ell^i(F(1))}\Rightarrow \ex{\F}{i}{\Gamma^k}{I},
$$
cette opération devient 
$$
\ex{\F}{i}{\Gamma^{2^k}}{I}\xrightarrow{f^{*}\left(Sq^{2^k}\bullet\right)}\ex{\F}{i}{\Gamma^{2^{k+1}}}{I}
$$
et est donc non-triviale. Ce morphisme est en fait induit par le Verchiebung $\Gamma^{2^{k+1}}\to\Gamma^{2^k}$, dual au morphisme de Frobenius $S^{2^{k}}\to S^{2^{k+1}}$ (voir \cite{Hai}). Il s'ensuit que si $i=2^n(2k+1)$
$$
\ell^i(F(1))\cong \frac{F(1)}{\Phi^n F(1)}
$$
en tant que modules instables.
\end{proof} 
En particulier, d'après \cite[théorème 7.3]{FLS94}:
\begin{theo}\label{fls94}
Le produit de Yoneda fait de $\ex{\F}{*}{I}{I}$ est une $\f2-$algèbre commutative. Elle est engendrée par les classes $e_{n}\in\ex{\F}{2^{n+1}}{I}{I}$ et admet la présentation suivante
$$
\ex{\F}{*}{I}{I}\cong\frac{\f2\left[e_0,e_1,\ldots,e_n,\ldots\right]}{\langle e_n^2,n\in\n\rangle},
$$
$\langle e_n^2,n\in\n\rangle$ désignant l'idéal engendré par les puissances $2-$ièmes.
\end{theo}

\subsubsection*{Contre-exemples} 
On aimerait que le morphisme $\ex{\u}{*}{\Phi}{\Phi}$ soit injectif en général. Cependant:
\begin{conex}
Il existe un entier $i$ tel que le morphisme $\ex{\u}{5}{\Phi}{\Phi}$ $$\ex{\u}{5}{\Phi^{i}(F(1)\otimes F(1))}{\Phi^{i}(F(1))}\to \ex{\u}{5}{\Phi^{i+1}(F(1)\otimes F(1))}{\Phi^{i+1}(F(1))}$$
n'est pas injectif.
\end{conex}
On va maintenant justifier ce contre-exemple. Dans un premier temps, on montre que $$\mathrm{colim}_{n}\ex{\u}{*}{\Phi^{n}(F(1)\otimes F(1))}{F(1)}=0.$$ Par contre, dans un deuxième temps on vérifie que $$\ex{\u}{5}{F(1)\otimes F(1)}{F(1)}\cong\f2.$$ Alors il existe $i$ tel que $\left(\lambda_{\Phi^{i}(F(1)\otimes F(1))}\right)^{*}$ n'est pas injectif. Au cas contraire:
$$
\f2\subset\mathrm{colim}_{n}\ex{\u}{*}{\Phi^{n}(F(1)\otimes F(1))}{F(1)},
$$ 
ce qui contredit la trivialité de $\mathrm{colim}_{n}\ex{\u}{*}{\Phi^{n}(F(1)\otimes F(1))}{F(1)}$. 

Selon \cite{HLS93,CS15}, on sait calculer $\mathrm{colim}_{n}\ex{\u}{*}{\Phi^{n}M}{F(1)}$:
\begin{theo} Soit $n$ un entier. Le morphisme
$$
\ex{\u}{*}{\Phi^{n}M}{F(1)}\to \ex{\F}{*}{f(M)}{I},
$$ 
naturel en $M$, induit un isomorphisme
$$
\mathrm{colim}_{n}\ex{\u}{*}{\Phi^{n}M}{F(1)}\cong \ex{\F}{*}{f(M)}{I}
$$
naturel en $M$.
\end{theo} 
De plus, d'après \cite{FLS94}
$$
\ex{\F}{*}{f(F(1)\otimes F(1))}{I}=\ex{\F}{*}{I\otimes I}{I}=0,
$$ 
alors: 
$$
\mathrm{colim}_{n}\ex{\u}{*}{\Phi^{n}(F(1)\otimes F(1))}{F(1)}=0.
$$ 
Les groupes $\ex{\u}{*}{\Phi^{n}(F(1)\otimes F(1))}{F(1)}$ doivent être nuls si on suppose que $\left(\lambda_{\Phi^{n-1}(F(1)\otimes F(1))}\right)^{*}$ est injectif. Cependant:
\begin{lemm}\label{contre1}
On a un isomorphisme
$$
\ex{\u}{5}{F(1)\otimes F(1)}{F(1)}\cong\f2.
$$
\end{lemm}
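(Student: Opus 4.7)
The plan is to compute $\ex{\u}{5}{F(1)\otimes F(1)}{F(1)}$ by constructing an explicit partial projective resolution of $F(1)\otimes F(1)$ in low degrees and dualizing into $F(1)$.

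First, I would build the projective cover $P_{0}\twoheadrightarrow F(1)\otimes F(1)$. Analyzing $F(1)\otimes F(1)$ modulo $\bar{\a2}\cdot(F(1)\otimes F(1))$ using the Cartan formula, the indecomposables of $F(1)\otimes F(1)$ are the class $u\otimes u$ in degree $2$ and, for each $a\geq 1$, the two classes $u\otimes u^{2^{a}}$ and $u^{2^{a}}\otimes u$ in degree $2^{a}+1$. Hence
\[
P_{0} \;=\; F(2)\;\oplus\;\bigoplus_{a\geq 1}\bigl(F(2^{a}+1)^{(L)}\oplus F(2^{a}+1)^{(R)}\bigr)\twoheadrightarrow F(1)\otimes F(1).
\]

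Second, I would identify the kernel $K^{0}$ by tracking Steenrod relations among the images. A direct Cartan computation produces the generators of $K^{0}$ in low degrees: the relation $Sq^{1}\imath_{2}=\imath_{3}^{(L)}+\imath_{3}^{(R)}$ in degree $3$, the relation $Sq^{2}\imath_{2}=Sq^{1}\imath_{3}^{(L)}$ in degree $4$, and the pair $Sq^{2}\imath_{3}^{(L)}=\imath_{5}^{(L)}$, $Sq^{2}\imath_{3}^{(R)}=\imath_{5}^{(R)}$ in degree $5$. Iterating, I would lift these to a projective cover $P_{1}\twoheadrightarrow K^{0}$ and then compute the next syzygy $K^{1}$ up through the dimension needed to isolate $\mathrm{Ext}^{5}$.

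Third, I would apply $\ho{\u}{-}{F(1)}$ term by term. Since $\ho{\u}{F(n)}{F(1)}=F(1)^{n}=\f2$ when $n$ is a power of $2$ and $0$ otherwise, most summands of each $P_{i}$ drop out; the surviving contributions come only from the $F(2^{k})$ pieces. In the relevant range, this reduces $\ho{\u}{P_{\bullet}}{F(1)}$ to a very small complex of $\f2$-vector spaces whose differentials are obtained by reading off the coefficients of the powers-of-two generators in the chosen relations. Taking $H^{5}$ of this tiny complex should leave exactly one surviving class, giving the required $\f2$.

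The main obstacle will be carrying out step two cleanly: the projective cover is infinite, and successive application of Adem relations causes the generators to mix across the $(L)$ and $(R)$ branches, so one must be careful not to miss or double-count relations at the stages $P_{1}$ and $P_{2}$. A decisive independent check is provided by the colimit computation recalled just before the lemma: since $\ex{\F}{*}{I\otimes I}{I}=0$, the class found in $\ex{\u}{5}{F(1)\otimes F(1)}{F(1)}$ must eventually vanish under iteration of $\Phi$---which is precisely the mechanism producing the counterexample, and which furnishes an upper bound of $\f2$ on the dimension found in step four.
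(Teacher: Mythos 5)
Your proposal is a plan rather than a proof: the entire content of the lemma is the degree-$5$ computation, and that is precisely the part you leave as ``should leave exactly one surviving class''. To reach $\ex{\u}{5}{F(1)\otimes F(1)}{F(1)}$ by your route you would need to exhibit six terms $P_{0},\dots,P_{5}$ of a projective resolution together with their differentials, and none of this is carried out. Moreover, the one piece of data you do write down is incorrect. Over $\a2$ the module $F(1)\otimes F(1)$ is generated by $u\otimes u$ and $u\otimes u^{2}$ alone: in degree $3$ the class $u^{2}\otimes u+u\otimes u^{2}=Sq^{1}(u\otimes u)$ is decomposable, so only one new generator occurs there, and for $a\geq 2$ both classes in degree $2^{a}+1$ are decomposable, since the Cartan formula gives $u^{2^{a}}\otimes u=Sq^{2^{a-1}}\left(u^{2^{a-1}}\otimes u\right)$ and symmetrically for the other factor. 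Hence the projective cover is $F(2)\oplus F(3)$, not the infinite direct sum you propose, and the ``relations'' you list in step two (for instance $Sq^{2}\imath_{3}^{(L)}=\imath_{5}^{(L)}$) are artifacts of redundant summands rather than generators of the first syzygy. Finally, your ``decisive independent check'' is not one: the vanishing of $\mathrm{colim}_{n}\,\ex{\u}{*}{\Phi^{n}(F(1)\otimes F(1))}{F(1)}$ only says that every class dies after finitely many Frobenius twists; it gives no upper bound at all on the dimension of $\ex{\u}{5}{F(1)\otimes F(1)}{F(1)}$ itself.

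For comparison, the paper sidesteps the whole computation. It uses the short exact sequence $0\to F(2)\to F(1)\otimes F(1)\to \Lambda^{2}(F(1))\to 0$; since $F(2)$ is projective, the long exact sequence yields $\ex{\u}{5}{F(1)\otimes F(1)}{F(1)}\cong\ex{\u}{5}{\Lambda^{2}(F(1))}{F(1)}$, and the latter group is read off from the minimal injective resolution of $\Lambda^{2}(F(1))$ already computed in [Cuo14b, corollaire 1.1.4.17]. Your (corrected) observation that the cover is $F(2)\oplus F(3)$ is this exact sequence in disguise: to salvage your argument, split off the projective summand $F(2)$ first and then resolve only $\Lambda^{2}(F(1))$, which is the smaller and documented computation.
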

\begin{proof}
On désigne par $\Lambda^{2}$ la $2-$ième puissance extérieure. Il résulte de la résolution injective minimale de $\Lambda^{2}(F(1))$ \cite[corollaire 1.1.4.17]{Cuo14b} qu'on a un isomorphisme: $$\ex{\u}{5}{\Lambda^2(F(1))}{F(1)}\cong\f2.$$
Le lemme découle de la suite exacte longue associée à la suite exacte courte
$$
0\to F(2)\to F(1)\otimes F(1)\to \Lambda^2(F(1))\to 0.
$$
\end{proof}
Un contre-exemple concernant les modules nilpotents est donné ci-dessous. 
\begin{conex}
Le morphisme 
$$\ex{\u}{3}{\Sigma\f2}{F(1)}\to\ex{\u}{3}{\Sigma^{2}\f2}{\Phi F(1)}$$
n'est pas injectif.
\end{conex}  
\section{Sur la résolution injective minimale de $F(1)$}
D'après la section 2, on désigne par $(I^{\bullet},\partial^{\bullet})=(N^{\bullet}\oplus R^{\bullet},\partial^{\bullet})$ la résolution injective minimale de $F(1)$, $R^{\bullet}$ et $N^{\bullet}$ désignant la partie réduite et la partie nilpotente respectivement. Les morphismes $\partial^l$ s'écrivent sous forme matricielle:
\begin{equation}\label{omegal}
\left(\begin{matrix}
\partial^{l}_{n} & \omega^{l}\\
0 & \partial^{l}_{r}
\end{matrix}\right):N^{l}\oplus R^{l}\to N^{l+1}\oplus R^{l+1}. 
\end{equation}
 La minimalité de la résolution signifie que $I^{0}$ est l'enveloppe injective de $F(1)$, $I^{1}$ est celle du quotient $I^{0}/F(1)$  et $I^{j}$ est l'enveloppe injective du conoyau $\cke{\partial^{j-2}}$ pour $j\geq 2$.
\subsection{La partie réduite de la résolution}
Ce paragraphe est consacré pour étudier la partie $\left(R^{\bullet},\partial_{r}^{\bullet}\right)$ de la résolution injective minimale de $F(1)$. En particulier, chaque module $R^{j}$ sera calculé en degré $1$.

L'enveloppe injective de $F(1)$ est $\tilde{\h}^{*}\z/2$. Nous allons montrer que $R^{k}$ contient un seul facteur direct isomorphe à $\tilde{\h}^{*}\z/2$ si $k$ est pair et n'en contient aucun sinon. Pour ce fait, remarquons que d'une part $f(F(1))=I$ est simple dans $\F$ et d'autre part, $f$ préserve la minimalité des résolution. Alors le nombre de facteurs isomorphes à $\tilde{\h}^{*}\z/2$  de $R^{k},$  est la dimension sur $\ho{\F}{I}{I}$ du groupe $\ex{\F}{k}{I}{I}.$ Il découle du théorème \ref{l2} que:
\begin{prop}
Chaque $R^{k}$ contient un seul facteur direct isomorphe à $\tilde{\h}^{*}\z/2$ si $k$ est pair, et n'en contient aucun sinon.
\end{prop}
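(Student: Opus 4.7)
The plan is to use the Lannes correspondence between reduced injectives in $\u$ and injectives in $\F_{\omega}$ to translate the question into a computation of $\ex{\F}{*}{I}{I}$, which is already available via Theorem \ref{fls94}. First I would observe that the reduced indecomposable injective $\tilde{\h}^{*}\z/2$ corresponds under $f$ to the simple functor $I$, so that the multiplicity of $\tilde{\h}^{*}\z/2$ as a direct summand of $R^{k}$ coincides with the multiplicity of $I$ as a direct summand of $f(R^{k})$. By Corollary \ref{reduitf1} the complex $(R^{\bullet},\partial_{r}^{\bullet})$ is $\ell(I^{\bullet},\partial^{\bullet})$, and applying the exact functor $f$ and using $f\circ m=\mathrm{id}_{\F_{\omega}}$ together with $f\circ \ell \cong f$, one gets that $f(R^{\bullet},\partial^{\bullet}_{r})$ is a (still minimal) injective resolution of $f(F(1))=I$ in $\F_{\omega}$.

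Next, for a minimal injective resolution of the simple functor $I$, the multiplicity of $I$ as a direct summand of the $k$-th term equals $\dim_{\mathrm{End}(I)} \ex{\F}{k}{I}{I}$. Since $\mathrm{End}(I)=\f2$, this is simply $\dim_{\f2}\ex{\F}{k}{I}{I}$. The computation of this dimension is immediate from Theorem \ref{fls94}: the algebra
\begin{equation*}
\ex{\F}{*}{I}{I}\cong \frac{\f2[e_{0},e_{1},\ldots]}{\langle e_{n}^{2}\rangle}
\end{equation*}
has an $\f2$-basis of squarefree monomials $e_{n_{1}}\cdots e_{n_{r}}$ of total homological degree $\sum_{j}2^{n_{j}+1}$. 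Because the degrees $2,4,8,16,\ldots$ are the distinct positive powers of $2$, each even positive integer is a sum of them in exactly one way (binary expansion), so $\dim_{\f2}\ex{\F}{k}{I}{I}=1$ for $k$ even (including $k=0$, corresponding to the empty product) and $0$ for $k$ odd.

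Combining these two steps yields the statement. The only slightly delicate point I expect is justifying that applying $f$ preserves the \textbf{minimality} of the resolution: this follows from the fact that $f$ is exact and that $f$ restricted to reduced injectives is the Lannes equivalence with indecomposable injectives in $\F_{\omega}$, so an inclusion $I^{k}/\mathrm{im}(\partial^{k-1})\hookrightarrow I^{k}$ which is essential on the reduced part is sent to an essential inclusion in $\F_{\omega}$. Once this is granted, the argument reduces entirely to the known formula for $\ex{\F}{*}{I}{I}$ and no further calculation is needed.
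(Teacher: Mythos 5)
Your argument is correct and follows essentially the same route as the paper: transport the minimal injective resolution to $\F_{\omega}$ via $f$ (using that $f(F(1))=I$ is simple and that $f$ preserves minimality), identify the number of summands $\tilde{\h}^{*}\z/2$ in $R^{k}$ with $\dim_{\f2}\ex{\F}{k}{I}{I}$, and read this dimension off from the known structure of $\ex{\F}{*}{I}{I}$ (the paper quotes the $\Gamma^{2^{k}}$ computation of théorème \ref{l2} with $k=0$, you use the ring presentation of théorème \ref{fls94}; these are equivalent). The one terminological slip is that what occurs as a direct summand of $f(R^{k})$ is the injective envelope of the simple functor $I$, namely $f\bigl(\tilde{\h}^{*}\z/2\bigr)$, not $I$ itself, which is not injective in $\F_{\omega}$; your multiplicity formula $\dim_{\mathrm{End}(I)}\ex{\F}{k}{I}{I}$ is nonetheless the correct one.
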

A part $\tilde{\h}^{*}\z/2$ et $\f2$, les autres modules instables injectifs indécomposables sont
$1-$connexes \cite[section 4.4]{Sch94}. Alors les $R^{2l+1}$ sont $1-$connexes et
$$
\left(R^{2l}\right)^{1}\cong\f2.
$$
Par abus de notation on note $u$ le seul générateur de degré $1$ de $R^{2k}$. L'exactitude de la résolution injective de $F(1)$ implique que $\partial_{r}^{2l}(u)$ est trivial alors que $\omega^{2l}(u)$ ne l'est pas. Il en découle que chaque terme $N^{2l+1}$ contient un facteur direct $J(2^{n_l})$ tel que $$\omega^{2l}(u)=x_{n_l}.$$

Dans le reste de cet article, on va montrer que dans plusieurs cas intéressants, le terme $N^{2l+1}$
contient un seul facteur direct de type $J(2^{n_l})$ alors que $N^{2l}$ n'en contient aucun et cela
est suffisant pour calculer le groupe $\ex{\u}{2l}{\Phi^{r}F(1)}{F(1)}$. En effet, on va montrer que
dans ces cas:
\begin{prop}\label{calculessentiel}
Soit $l$ un entier. Si il existe un entier $n_{l}$ tel que  $$N^{2l+1}=J(2^{n_{l}})\oplus
\bigoplus_{\alpha}J\left( 2^{m_{\alpha}}(2t_{\alpha}+1)\right)$$ et que $N^{2l}$ ne contient
aucun facteur direct de type $J(2^{n})$ alors on a:
$$\
\ex{\u}{2l}{\Phi^{k}F(1)}{F(1)}\cong\left\{\begin{array}{cl}
0 &\text{ si } k\leq n_{l},\\
\f2&\text{ si } k> n_{l}.
\end{array}\right.
$$
De plus, les morphismes $$\ex{\u}{2l}{\Phi^{k}F(1)}{F(1)}\to \ex{\u}{2l}{\Phi^{k+1}F(1)}{F(1)}$$
sont injectifs.
\end{prop}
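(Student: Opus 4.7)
The plan is to compute $\ex{\u}{2l}{\Phi^{k}F(1)}{F(1)}$ as the cohomology at position $2l$ of the complex obtained by applying $\ho{\u}{\Phi^{k}F(1)}{-}$ to the minimal injective resolution $(I^{\bullet},\partial^{\bullet})=(N^{\bullet}\oplus R^{\bullet},\partial^{\bullet})$ of $F(1)$. The adjunction $\Phi\dashv\tilde{\Phi}$, combined with $\tilde{\Phi}R^{i}\cong R^{i}$ for the reduced injective part and the elementary formulas $\tilde{\Phi}J(2m)\cong J(m)$, $\tilde{\Phi}J(2m+1)=0$, gives
$$\ho{\u}{\Phi^{k}F(1)}{I^{i}}\cong\left(\tilde{\Phi}^{k}N^{i}\right)^{1}\oplus\left(R^{i}\right)^{1}.$$
Since $\left(J(m)\right)^{1}\cong\f2$ precisely when $m$ is a power of $2$, the hypothesis on $N^{2l}$ and $N^{2l+1}$ yields $\ho{\u}{\Phi^{k}F(1)}{I^{2l}}\cong\f2\cdot u$ (coming entirely from $R^{2l}$), while $\ho{\u}{\Phi^{k}F(1)}{I^{2l+1}}\cong\f2$ if $k\leq n_{l}$ and vanishes if $k>n_{l}$; the summands $J(2^{m_{\alpha}}(2t_{\alpha}+1))$ contribute nothing, since $2^{m_{\alpha}-k}(2t_{\alpha}+1)$ is never a power of $2$ when $t_{\alpha}\geq 1$.

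I would then compute the differential $\partial^{2l}_{*}$ on the class $u$. Starting from $\omega^{2l}(u)=x_{n_{l}}$ and using $\a2$-linearity together with Miller's relation $Sq^{1}x_{n}=x_{n-1}^{2}$ and the Cartan formula iteratively, one obtains
$$\omega^{2l}\!\left(u^{2^{k}}\right)=\omega^{2l}\!\left(\mathrm{Sq}_{0}^{k}u\right)=\mathrm{Sq}_{0}^{k}x_{n_{l}}=x_{n_{l}-k}^{2^{k}}\in J(2^{n_{l}})^{2^{k}}.$$
Under the identification $\ho{\u}{\Phi^{k}F(1)}{J(2^{n_{l}})}\cong J(2^{n_{l}-k})^{1}$ coming from the iterated counit $\Phi^{k}J(2^{n_{l}-k})\to J(2^{n_{l}})$ of $\Phi\dashv\tilde{\Phi}$ -- which in the Miller algebra $\bigoplus_{n}J(n)$ takes the form $\Phi y\mapsto y^{2}$ on degree-$1$ generators -- the element $x_{n_{l}-k}^{2^{k}}$ represents the generator, being $x_{n_{l}-k}$ squared $k$ times. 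Hence $\partial^{2l}_{*}$ is injective for $k\leq n_{l}$ and zero for $k>n_{l}$.

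The image of $\partial^{2l-1}_{*}$ in the $\left(R^{2l}\right)^{1}$ factor vanishes: the block form of $\partial^{2l-1}$ kills the $N^{2l-1}\to R^{2l}$ component, and $\left(R^{2l-1}\right)^{1}=0$ since $R^{2l-1}$ is $1$-connected. Combining, $\ex{\u}{2l}{\Phi^{k}F(1)}{F(1)}=0$ for $k\leq n_{l}$ and is isomorphic to $\f2$, generated by the class of $u$, for $k>n_{l}$. For the injectivity of the transitions, I trace at the cochain level: $u\in\left(R^{2l}\right)^{1}$ corresponds to $\phi_{k}:\Phi^{k}F(1)\to R^{2l}$, $\Phi^{k}u\mapsto u^{2^{k}}$, and its precomposition with $\lambda_{\Phi^{k}F(1)}:\Phi^{k+1}F(1)\to\Phi^{k}F(1)$ sends $\Phi^{k+1}u$ to $Sq^{2^{k}}(u^{2^{k}})=u^{2^{k+1}}$, which is exactly $\phi_{k+1}$. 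Hence the induced transition carries the generator to the generator, and injectivity follows in all cases (trivially when the source is zero).

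The main technical point is the identification in the second paragraph of the chain-level element $x_{n_{l}-k}^{2^{k}}$ with the generator of $\ho{\u}{\Phi^{k}F(1)}{J(2^{n_{l}})}$ via the Frobenius counit on Brown-Gitler modules. This requires interpreting the counit $\epsilon:\Phi J(n)\to J(2n)$ as the squaring map $y\mapsto y^{2}$ in the Miller algebra, which is forced by the characterization $\tilde{\Phi}J(2n)\cong J(n)$ together with the Cartan formula (making $y^{2}$ killed by odd Steenrod squares).
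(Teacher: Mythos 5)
Your proposal is correct and follows essentially the same route as the paper: both arguments rest on the identity $\omega^{2l}(u^{2^{k}})=x_{n_{l}-k}^{2^{k}}$ together with the observation that the hypotheses force $\left(\tilde{\Phi}^{k}N^{2l}\right)^{1}=0$ and make $\left(\tilde{\Phi}^{k}N^{2l+1}\right)^{1}$ detected only by the factor $J(2^{n_{l}})$. The sole (cosmetic) difference is the injectivity step, where the paper invokes the vanishing of $\ex{\u}{2l}{\Phi^{k}F(1)/\Phi^{k+1}F(1)}{F(1)}$ and the associated long exact sequence, while you trace the generating cocycle $u^{2^{k}}\mapsto u^{2^{k+1}}$ explicitly; both versions use the same hypothesis on $N^{2l}$.
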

\begin{proof}
Puisque $N^{2l+1}$ contient une seule copie de $J\left(2^{n_{l}}\right)$, alors si $k\leq n_{l}$:
\begin{align*}
\ho{\u}{\Phi^{k}F(1)}{N^{2l+1}}&\cong\ho{\u}{\Phi^{k}F(1)}{J\left(2^{n_{l}}\right)}
\cong \langle x_{n_{l}-k}^{2^k}\rangle.
\end{align*}
Parce que d'une part $$\ell^{2l}(F(1))\cong \langle
u,u^2,\ldots,u^{2^{\left[\log_{2}l\right]-1}}\left|u\in  I^{2l}\right.\rangle$$ et d'autre part,
$\partial^{2l}(u^{2^k})=x_{n_{l}-k}^{2^k}$ alors le morphisme 
$$\h^{2l}\left(R^{\bullet}\right)\to \h^{2l+1}\left(N^{\bullet}\right)$$
est un isomorphisme $\f2\xrightarrow{\sim}\f2$. Il découle de la suite exacte longue associée à la
suite exacte courte des complexes 
$$0\to N^{\bullet}\to I^{\bullet}\to R^{\bullet}\to 0$$ 
que 
$$
\ex{\u}{2l}{\Phi^{k}F(1)}{F(1)}\cong 0.
$$
Si $k\geq n_{l}+1$, il résulte de la trivialité de 
$
\ho{\u}{\Phi^{k}F(1)}{N^{2l+1}}
$ 
que
$$
\ex{\u}{2l+1}{\Phi^{k}F(1)}{F(1)}\cong 0
$$ 
et que
\begin{align*}
\ex{\u}{2l}{\Phi^{k}F(1)}{F(1)}\cong\left(\ell^{2l}(F(1))\right)^{1}\cong\f2.
\end{align*}
De plus, parce que 
$$\ho{\u}{\frac{\Phi^{k}F(1)}{\Phi^{k+1}F(1)}}{I^{2l}}\cong
\ho{\u}{\frac{\Phi^{k}F(1)}{\Phi^{k+1}F(1)}}{N^{2l}\oplus R^{2l}}\cong 0$$
alors $\ex{\u}{2l}{\Phi^{k}F(1)/\Phi^{k+1}F(1)}{F(1)}$ est trivial et 
$$\ex{\u}{2l}{\Phi^{k}F(1)}{F(1)}\to \ex{\u}{2l}{\Phi^{k+1}F(1)}{F(1)}$$
est injectif.
\end{proof}
\subsection{La partie nilpotente de la résolution}
Puisque d'une part, les $\h^{k}\left(R^{\bullet},\partial_{r}^{\bullet}\right)$ sont finies et
d'autre part
$$
\h^{k}\left(R^{\bullet},\partial_{r}^{\bullet}\right)\cong \h^{k+1}\left(N^{\bullet},\partial_{n}^{\bullet}\right),
$$
alors les $\h^{k+1}\left(N^{\bullet},\partial_{n}^{\bullet}\right)$ sont finies. Dans ce paragraphe, on va montrer que les modules $N^{l}$ sont aussi finis.

Remarquons que si un module instable est nilpotent ou réduit, il en est de même pour son enveloppe injective. Un module instable peut se calculer comme l'extension d'un module réduit par le plus grand sous-module nilpotent. Le lemme suivant explique comment on forme l'enveloppe injective de l'extension à partir de la partie nilpotente et celle qui est réduite. La vérification de ce lemme est simple et est laissée aux lecteurs.
\begin{lemm}\label{rs5}
Étant donné une suite exacte courte 
$$
0\to N\to M\to R\to 0,$$
$N$ désignant un module nilpotent et $R$ désignant un module réduit, l'enveloppe injective de $M$ est la somme directe de l'enveloppe injective de $R$ et celle de $N$.
\end{lemm}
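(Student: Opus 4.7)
The plan is to construct $E(M)$ by decomposing it using theorem \ref{ls89} and identifying the two summands with $E(N)$ and $E(R)$. The argument relies throughout on two elementary facts, both recalled just before the lemma: $\ho{\u}{N'}{R'}=0$ whenever $N'$ is nilpotent and $R'$ is reduced, and, as a consequence, the injective envelope of a nilpotent (resp.\ reduced) module remains nilpotent (resp.\ reduced).

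Write $E(M)=A\oplus B$ with $A$ reduced and $B$ nilpotent, using the Lannes-Schwartz decomposition. The first step I would carry out is the identification $N=M\cap B$. One direction is immediate: the composite $N\hookrightarrow M\hookrightarrow E(M)\twoheadrightarrow A$ is zero, since it is a morphism from a nilpotent to a reduced module, so $N\subset B$. Conversely, $M\cap B$ is a nilpotent submodule of $M$, and since $M/N=R$ is reduced, $N$ is the maximal nilpotent submodule of $M$, whence $M\cap B\subset N$. It follows that the projection $p_A\colon M\to A$ has kernel exactly $N$, and therefore factors through an injection $R=M/N\hookrightarrow A$.

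The main step is to verify that the inclusions $N\subset B$ and $R\subset A$ are both essential, for then the uniqueness of the injective envelope yields $B\cong E(N)$, $A\cong E(R)$, and consequently $E(M)\cong E(N)\oplus E(R)$. Essentiality of $N$ in $B$ is direct: any nonzero submodule $S\subset B\subset E(M)$ meets $M$ nontrivially by essentiality of $M\subset E(M)$, and $S\cap M\subset M\cap B=N$ is then a nonzero subspace of $N$. For the second essentiality, let $T\subset A$ be nonzero; the same essentiality produces a nonzero element $y\in T\cap M$, and since $y\in A$ its $B$-component is zero, so $y=p_A(y)\in p_A(M)=R\subset A$, which gives $0\neq y\in T\cap R$.

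The only mildly subtle point in this plan is the essentiality of $R\subset A$: the subspace $M\cap A$ may be strictly smaller than the image $R=p_A(M)$ inside $A$, but one does not need equality, only the inclusion $M\cap A\subset R$, which is automatic since $p_A$ restricts to the identity on $A$. Aside from this bookkeeping, the argument is entirely formal, in agreement with the author's remark that the verification is simple and left to the reader.
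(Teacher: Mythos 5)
Your argument is correct. Note that the paper itself gives no proof of this lemma (it is explicitly left to the reader), so there is nothing to compare against; your route — decomposing $E(M)=A\oplus B$ via the Lannes--Schwartz classification of injectives, identifying $M\cap B$ with $N$ as the maximal nilpotent submodule of $M$, and checking the two essentiality statements — is the natural one, and each step (vanishing of morphisms from nilpotent to reduced modules, and the fact that a module both nilpotent and reduced is zero) is standard. The only remark worth making is that one could avoid Theorem \ref{ls89} altogether by mapping $M$ into $E(N)\oplus E(R)$ directly (extend $N\hookrightarrow E(N)$ over $M$ by injectivity and compose $M\twoheadrightarrow R\hookrightarrow E(R)$), but the essentiality check there is no shorter than yours, so nothing is lost.
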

Alors, pour déterminer $N^{j}$ il faut calculer les sous-modules nilpotents les plus grands des $\cke{\partial^{j-2}}$. On se place dans une situation générale des catégories abéliennes. La proposition suivante est classique et est laissée aux lecteurs.
\begin{prop}
Soient $\mathcal{C}$ une catégorie abélienne et $\left( I^{j},\partial^{j}\right)^{j\geq
0}$ une
suite exacte dans $\mathcal{C}$. On désigne par $\left( N^{j},\partial_{n}^{j}\right)^{j\geq 0}$ un
sous-complexe de cette suite. On note $\left( R^{j},\partial_{r}^{j}\right)^{j\geq 0}$ le complexe
quotient $\left( I^{j}/N^{j}\right)^{j\geq 0}.$ Alors, le noyau $M^{j}$ du composé
$$\cke{\partial^{j}}\to\cke{\partial^{j}_{r}}\to\frac{R^{j+1}}{\ke{\partial_{r}^{j+1}}}$$
s'insère dans une suite exacte:
$$ 0\to \h^{j}\left( R^{\bullet},\partial_{r}^{\bullet}\right)\to
\cke{\partial_{n}^{j}}\to M^{j}\to \h^{j+1}\left( R^{\bullet},\partial_{r}^{\bullet}\right)\to
0.$$
\end{prop}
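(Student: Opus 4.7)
La stratégie naturelle pour démontrer cette proposition est d'appliquer le lemme du serpent au diagramme commutatif à lignes exactes
$$
\begin{array}{ccccccccc}
0 & \to & N^{j} & \to & I^{j} & \to & R^{j} & \to & 0 \\
 & & \downarrow \partial_{n}^{j} & & \downarrow \partial^{j} & & \downarrow \partial_{r}^{j} & & \\
0 & \to & N^{j+1} & \to & I^{j+1} & \to & R^{j+1} & \to & 0
\end{array}
$$
formé des trois différentielles en degré $j$. Celui-ci fournit une suite exacte à six termes dont la partie qui nous intéresse est
$$
\ke{\partial_{r}^{j}} \xrightarrow{\delta} \cke{\partial_{n}^{j}} \xrightarrow{\alpha} \cke{\partial^{j}} \xrightarrow{\beta} \cke{\partial_{r}^{j}} \to 0.
$$

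Je calculerais ensuite le noyau du morphisme de connexion $\delta$. L'exactitude de la suite $(I^{\bullet},\partial^{\bullet})$ donne $\ke{\partial^{j}} = \im{\partial^{j-1}}$, dont l'image par la surjection $I^{j}\twoheadrightarrow R^{j}$ est précisément $\im{\partial_{r}^{j-1}}$. Il en résulte que $\ke{\delta} = \im{\partial_{r}^{j-1}}$, de sorte que $\delta$ induit une injection $\h^{j}(R^{\bullet},\partial_{r}^{\bullet}) \hookrightarrow \cke{\partial_{n}^{j}}$ dont l'image coïncide avec $\ke{\alpha}$ par exactitude de la suite du serpent.

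Pour conclure, je décrirais $M^{j}$ comme l'image réciproque par $\beta$ du sous-objet $\h^{j+1}(R^{\bullet},\partial_{r}^{\bullet}) \subset \cke{\partial_{r}^{j}}$. Ceci découle de ce que le morphisme canonique surjectif $\cke{\partial_{r}^{j}} \twoheadrightarrow R^{j+1}/\ke{\partial_{r}^{j+1}}$ a pour noyau $\h^{j+1}(R^{\bullet},\partial_{r}^{\bullet})$, et que $M^{j}$ est, par définition, le noyau de la composée avec $\beta$. Comme $\beta$ est surjectif, on obtient la suite exacte courte
$$
0 \to \ke{\beta} \to M^{j} \to \h^{j+1}(R^{\bullet},\partial_{r}^{\bullet}) \to 0
$$
dans laquelle $\ke{\beta} = \im{\alpha} \cong \cke{\partial_{n}^{j}}/\h^{j}(R^{\bullet},\partial_{r}^{\bullet})$. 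L'épissage de cette suite courte avec celle issue du lemme du serpent livre la suite exacte à quatre termes recherchée.

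Le raisonnement se réduit ainsi à une chasse au diagramme organisée autour du lemme du serpent; le seul point véritablement non formel est l'usage de l'exactitude de $(I^{\bullet},\partial^{\bullet})$ pour identifier $\ke{\delta}$ à $\im{\partial_{r}^{j-1}}$, ce qui garantit l'injectivité de la flèche $\h^{j}(R^{\bullet},\partial_{r}^{\bullet}) \hookrightarrow \cke{\partial_{n}^{j}}$ et, par suite, l'exactitude à gauche de la suite finale.
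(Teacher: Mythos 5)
Your proof is correct. Note that the paper does not actually prove this proposition — it declares it "classique et laissée aux lecteurs" — so there is no argument to compare against; your snake-lemma chase, with the identification $\ke{\delta}=\im{\partial_{r}^{j-1}}$ coming from the exactness of $(I^{\bullet},\partial^{\bullet})$ and the description of $M^{j}$ as $\beta^{-1}\bigl(\h^{j+1}(R^{\bullet},\partial_{r}^{\bullet})\bigr)$, is precisely the standard argument the author is alluding to, and it fills the omitted proof completely.
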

Cette proposition applique à la résolution injective minimale de $F(1)$.
\begin{prop}
Le noyau $\mathscr{M}^{j}$ du composé  
$$\cke{\partial^{j}}\to\cke{\partial^{j}_{r}}\to\frac{R^{j+1}}{\ke{\partial_{r}^{j+1}}}$$
s'insère dans une suite exacte:
$$ 0\to \ell^{j}\left(F(1)\right)\to
\cke{\partial_{n}^{j}}\to \mathscr{M}^{j}\to \ell^{j+1}\left(
F(1)\right)\to
0.$$
De plus, $N^{j+2}$ est l'enveloppe injective de $\mathscr{M}^{j}.$
\end{prop}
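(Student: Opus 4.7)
The plan is to treat the two assertions of the proposition separately, each being a direct consequence of structural facts recalled earlier in the section.

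For the exact sequence, I would simply invoke the general abelian-category proposition stated just above, applied to $\mathcal{C}=\u$, to the minimal injective resolution $(I^{\bullet},\partial^{\bullet})$ of $F(1)$, and to the subcomplex $(N^{\bullet},\partial_{n}^{\bullet})$ with quotient complex $(R^{\bullet},\partial_{r}^{\bullet})$. This yields the four-term exact sequence with $\h^{j}(R^{\bullet},\partial_{r}^{\bullet})$ and $\h^{j+1}(R^{\bullet},\partial_{r}^{\bullet})$ at the ends. It then only remains to identify these cohomology groups with $\ell^{j}(F(1))$ and $\ell^{j+1}(F(1))$, which is exactly the content of Corollaire \ref{reduitf1}.

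For the second assertion, I would show that $\mathscr{M}^{j}$ is, up to the canonical identification $\cke{\partial^{j}}\cong \im(\partial^{j+1})\subset I^{j+2}$, the largest nilpotent submodule of $\cke{\partial^{j}}$. Writing an element of $\cke{\partial^{j}}$ as $[(n,r)]$ with $(n,r)\in N^{j+1}\oplus R^{j+1}$, an unwinding of the matrix form \eqref{omegal} of $\partial^{j+1}$ shows that $[(n,r)]$ lies in $\mathscr{M}^{j}$ exactly when $r\in \ke{\partial_{r}^{j+1}}$; on the other side, $\partial^{j+1}(n,r)\in N^{j+2}$ exactly under this same condition. Hence the isomorphism $\cke{\partial^{j}}\cong\im(\partial^{j+1})$ carries $\mathscr{M}^{j}$ onto $\im(\partial^{j+1})\cap N^{j+2}$, while the quotient $\cke{\partial^{j}}/\mathscr{M}^{j}$ is carried isomorphically onto $\im(\partial_{r}^{j+1})\subset R^{j+2}$, a reduced module. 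Since any nilpotent submodule of $N^{j+2}\oplus R^{j+2}$ must project trivially onto $R^{j+2}$, the submodule $\mathscr{M}^{j}$ is indeed the largest nilpotent submodule of $\cke{\partial^{j}}$.

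To conclude, I would apply Lemme \ref{rs5} to the short exact sequence
\begin{equation*}
0\to \mathscr{M}^{j}\to \cke{\partial^{j}}\to \im(\partial_{r}^{j+1})\to 0,
\end{equation*}
which expresses the injective envelope of $\cke{\partial^{j}}$ as the direct sum of the injective envelope of $\mathscr{M}^{j}$ (automatically nilpotent) and the injective envelope of $\im(\partial_{r}^{j+1})$ (automatically reduced). By the minimality of the resolution, this envelope is $I^{j+2}=N^{j+2}\oplus R^{j+2}$, and the uniqueness of the decomposition into nilpotent and reduced summands forces $N^{j+2}$ to be the injective envelope of $\mathscr{M}^{j}$.

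The only delicate point is the bookkeeping in the second step: one must verify carefully that under the identification $\cke{\partial^{j}}\cong\im(\partial^{j+1})$, the submodule $\mathscr{M}^{j}$ really is the intersection with $N^{j+2}$, since the definition of $\mathscr{M}^{j}$ involves the composite through $\cke{\partial_{r}^{j}}$ rather than $\im(\partial_{r}^{j+1})$ directly. Once this matrix-level check is done, the rest is formal and relies only on Lemme \ref{rs5} and the minimality of $(I^{\bullet},\partial^{\bullet})$.
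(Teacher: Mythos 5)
Your argument is correct and follows essentially the same route as the paper's (very terse) proof: the four-term exact sequence is the general abelian-category proposition applied to $(I^{\bullet},\partial^{\bullet})$ with subcomplex $(N^{\bullet},\partial_{n}^{\bullet})$, combined with the identification $\h^{*}\left(R^{\bullet},\partial_{r}^{\bullet}\right)\cong\ell^{*}(F(1))$ of the corollaire \ref{reduitf1}, and the statement about $N^{j+2}$ comes from the lemme \ref{rs5} applied to the extension of the reduced module $\cke{\partial^{j}}/\mathscr{M}^{j}$ (a submodule of $R^{j+1}/\ke{\partial_{r}^{j+1}}$) by the nilpotent module $\mathscr{M}^{j}$, together with the minimality of the resolution. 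The only, immaterial, difference is that the paper deduces the nilpotence of $\mathscr{M}^{j}$ from the four-term sequence itself --- it is an extension of $\ell^{j+1}(F(1))$ by a quotient of $\cke{\partial_{n}^{j}}$, both nilpotent --- rather than from your matrix-level identification of $\mathscr{M}^{j}$ with $\im{\partial^{j+1}}\cap N^{j+2}$.
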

\begin{proof}
La proposition résulte des trois points suivants:
\begin{enumerate}
\item le lemme \ref{rs5};
\item les $R^{j+1}/\ke{\partial_{r}^{j+1}}$ sont réduits;
\item les  $\ell^{j}\left(F(1)\right)$ et $\cke{\partial_{n}^{j}}$ sont nilpotents.
\end{enumerate}
\end{proof}
Comme $F(1)$ est réduit, $I^{0}$ l'est aussi et il suit que $N^0=0$. Alors, par une récurrence
simple on obtient:
\begin{coro}\label{lemmeimp}
Dans la résolution injective minimale de $F(1)$, les $N^i$ sont finis.
\end{coro}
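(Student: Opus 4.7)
Le plan est de procéder par récurrence sur $j \geq 0$, en s'appuyant sur la proposition précédente qui identifie $N^{j+2}$ à l'enveloppe injective du module $\mathscr{M}^j$ et l'insère dans la suite exacte à quatre termes
\begin{equation*}
0 \to \ell^j(F(1)) \to \cke{\partial_n^j} \to \mathscr{M}^j \to \ell^{j+1}(F(1)) \to 0.
\end{equation*}

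Pour amorcer la récurrence, je vérifierai que $N^0 = N^1 = 0$. Comme $F(1)$ est réduit, son enveloppe injective $I^0 \cong \tilde{\h}^{*}\z/2$ l'est aussi, donc $N^0 = 0$. Par minimalité, $I^1$ est l'enveloppe injective de $I^0/F(1)$, engendré par les classes des $u^n$ avec $n$ non puissance de $2$; ce module reste réduit, car $2^k n$ n'est jamais une puissance de $2$ lorsque $n$ n'en est pas une, donc $\mathrm{Sq}_0^k(u^n) = u^{2^k n}$ demeure non nul dans le quotient. L'enveloppe injective d'un module réduit étant réduite, $I^1$ est réduit, d'où $N^1 = 0$.

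Pour le pas de récurrence, je supposerai $N^{j+1}$ fini. Alors $\cke{\partial_n^j}$, quotient de $N^{j+1}$, est fini. D'après le corollaire \ref{calcul1}, $\ell^i(F(1)) \cong F(1)/\Phi^n F(1)$ pour $i = 2^n(2k+1)$, et ce quotient n'ayant pour classes non-nulles que $u, u^2, \ldots, u^{2^{n-1}}$, il est fini. La suite exacte ci-dessus force alors $\mathscr{M}^j$ à être fini. Étant nilpotent, son enveloppe injective $N^{j+2}$ se décompose en somme directe de modules de Brown-Gitler $J(n)$ par le théorème \ref{ls89}; la caractérisation $\ho{\u}{M}{J(n)} \cong (M^n)^*$, combinée à la bornitude des degrés non-nuls de $\mathscr{M}^j$, ne laisse qu'un nombre fini de $J(n)$ dans cette décomposition, donc $N^{j+2}$ est fini.

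Le point le plus délicat est précisément cette dernière étape, à savoir le passage de la finitude de $\mathscr{M}^j$ à celle de son enveloppe injective dans $\u$, qui repose sur la classification des injectifs indécomposables nilpotents. Les autres étapes sont essentiellement formelles une fois la structure de la suite exacte et le calcul explicite de $\ell^i(F(1))$ en main.
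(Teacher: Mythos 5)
Votre démonstration est correcte et suit pour l'essentiel la «~récurrence simple~» que l'article laisse au lecteur~: amorce $N^{0}=N^{1}=0$ (le quotient $I^{0}/F(1)$ est bien réduit, donc son enveloppe injective aussi), puis propagation de la finitude au moyen de la proposition précédente (suite exacte à quatre termes et identification de $N^{j+2}$ avec l'enveloppe injective de $\mathscr{M}^{j}$), de la finitude des $\ell^{j}(F(1))$ pour $j\geq 1$ donnée par le corollaire \ref{calcul1}, et du passage à l'enveloppe injective d'un module nilpotent fini. Le seul point à resserrer est l'invocation du théorème \ref{ls89}~: celui-ci autorise des injectifs nilpotents indécomposables de la forme $L\otimes J(n)$ avec $L$ réduit infini (par exemple $\tilde{\h}^{*}\z/2\otimes J(1)$), de sorte qu'on ne peut pas affirmer directement que l'enveloppe injective d'un module nilpotent est somme de modules de Brown-Gitler~; pour exclure ces facteurs il suffit toutefois de remarquer qu'un module nilpotent fini $M$ se plonge dans la somme finie $\bigoplus_{n\leq \de{M}}J(n)^{\oplus\dim M^{n}}$ par les morphismes associés aux formes linéaires sur les composantes homogènes, ce qui borne son enveloppe injective et donne la conclusion voulue.
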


Si $M$ est un module instable fini, on désigne par $\mathrm{d}(M)$ le plus grand degré tel que $M^{n}$ ne soit pas trivial. Pour exemple, $\mathrm{d}(J(n))=n.$ 
Le lemme suivant mesure les $\mathrm{d}(N^{i})$.
\begin{lemm}\label{rs4}
Si $n>k+1\geq 2$ alors:
\begin{longtable}{|c||*{11}{c|}}
\hline
\rowcolor{Gray}$m$&$2^{n}-2$&$2^{n}-1$&$2^{n}$&$2^{n}+1$&$2^n-2^k-1$&$2^n-2^k$&$2^n-2^k+1$\\
\hline
$N^{m}$& $0$ & $J(1)$& $0$& $J\left(2^{n-1}\right)$& $J(2)$& $0$&$J\left(2^{k-1}\right)$\\
\hline
\end{longtable}
Si $l$ est un entier tel que $2^{k-1}-1\geq l\geq 2$, alors:
$$
\de{N^{2^n-2^k+l}}= 2^{k-1}-l+1\text{ et } \left(N^{2^n-2^k+l}\right)^{2^{k-1}-l+1}\cong\f2.
$$
\definecolor{xdxdff}{rgb}{0.49019607843137253,0.49019607843137253,1.0}
\definecolor{qqqqff}{rgb}{0.0,0.0,1.0}
\definecolor{cqcqcq}{rgb}{0.7529411764705882,0.7529411764705882,0.7529411764705882}
\begin{tikzpicture}[line cap=round,line join=round,>=triangle 45,x=0.27cm,y=0.4cm]
\draw [color=cqcqcq,dash pattern=on 1pt off 1pt, xstep=0.54cm,ystep=0.4cm] (-0.5,-0.5) grid (36.0,17.0);
\draw[->,color=black] (-0.5,0.0) -- (36.0,0.0);
\foreach \x in {0,1,2,3}
\draw[shift={(\x+1,0)},color=black] (0pt,2pt) -- (0pt,-2pt) node[below] {\footnotesize $t_\x$};
\foreach \x in {0,1,2,3,4,5,6}
\draw[shift={(\x+13,0)},color=black] (0pt,2pt) -- (0pt,-2pt) node[below] {\footnotesize $r_\x$};
\foreach \x in {0,1,2,3}
\draw[shift={(\x+21,0)},color=black] (0pt,2pt) -- (0pt,-2pt) node[below] {\footnotesize $s_\x$};
\foreach \x in {0,1,2,3,4,5,6}
\draw[shift={(\x+29,0)},color=black] (0pt,2pt) -- (0pt,-2pt) node[below] {\footnotesize $q_\x$};
\draw[->,color=black] (0.0,-0.5) -- (0.0,17.0);
\foreach \y in {0}
\draw[shift={(0,\y+16)},color=black] (2pt,0pt) -- (-2pt,0pt) node[left] {\footnotesize $2^{k-1}$};
\foreach \y in {1}
\draw[shift={(0,\y+14)},color=black] (2pt,0pt) -- (-2pt,0pt) node[left] {\footnotesize $2^{k-1}-1$};
\foreach \y in {2}
\draw[shift={(0,\y+12)},color=black] (2pt,0pt) -- (-2pt,0pt) node[left] {\footnotesize $2^{k-1}-2$};
\foreach \y in {3}
\draw[shift={(0,\y+10)},color=black] (2pt,0pt) -- (-2pt,0pt) node[left] {\footnotesize $2^{k-1}-3$};
\foreach \y in {-1}
\draw[shift={(0,\y+18)},color=black] (0pt,0pt) -- (0pt,0pt) node[left] {\footnotesize $d(N^{j})$};
\foreach \y in {8}
\draw[shift={(0,\y+0)},color=black] (2pt,0pt) -- (-2pt,0pt) node[left] {\footnotesize $2^{k-2}$};
\foreach \y in {9}
\draw[shift={(0,\y-2)},color=black] (2pt,0pt) -- (-2pt,0pt) node[left] {\footnotesize $2^{k-2}-1$};
\foreach \y in {10}
\draw[shift={(0,\y-4)},color=black] (2pt,0pt) -- (-2pt,0pt) node[left] {\footnotesize $2^{k-2}-2$};
\foreach \y in {0,1,2,3,4}
\draw[shift={(0,\y)},color=black] (2pt,0pt) -- (-2pt,0pt) node[left] {\footnotesize $\y$};
\draw[color=black] (36,0) node[right] {\footnotesize $j$};
\clip(-1,-1) rectangle (36.0,17.0);
\draw (1.0,16.0)-- (2.0,15.0);
\draw (2.0,15.0)-- (3.0,14.0);
\draw (3.0,14.0)-- (4.0,13.0);
\draw [dash pattern=on 5pt off 5pt] (4.0,13.0)-- (13.0,4.0);
\draw (13.0,4.0)-- (14.0,3.0);
\draw (14.0,3.0)-- (15.0,2.0);
\draw (17.0,8.0)-- (18.0,7.0);
\draw (18.0,7.0)-- (19.0,6.0);
\draw [dash pattern=on 5pt off 5pt] (19.0,6.0)-- (21.0,4.0);
\draw (21.0,4.0)-- (22.0,3.0);
\draw (22.0,3.0)-- (23.0,2.0);
\draw (29.0,4.0)-- (30.0,3.0);
\draw (30.0,3.0)-- (31.0,2.0);
\draw [dash pattern=on 1pt off 1pt] (4.5,-0.5)-- (12.5,-0.5);
\draw [dash pattern=on 1pt off 1pt] (24.6,-0.5)-- (28.4,-0.5);
\draw [dash pattern=on 1pt off 1pt] (-0.8,4.4)-- (-0.8,5.6);
\draw [dash pattern=on 1pt off 1pt] (-0.8,8.4)-- (-0.8,12.6);
\draw [dash pattern=on 2pt off 2pt] (1.0,16.0)-- (1.0,0.0);
\draw [dash pattern=on 2pt off 2pt] (2.0,15.0)-- (2.0,0.0);
\draw [dash pattern=on 2pt off 2pt] (3.0,14.0)-- (3.0,0.0);
\draw [dash pattern=on 2pt off 2pt] (4.0,13.0)-- (4.0,0.0);
\draw [dash pattern=on 2pt off 2pt] (13.0,4.0)-- (13.0,0.0);
\draw [dash pattern=on 2pt off 2pt] (14.0,3.0)-- (14.0,0.0);
\draw [dash pattern=on 2pt off 2pt] (15.0,2.0)-- (15.0,0.0);
\draw [dash pattern=on 2pt off 2pt] (17.0,8.0)-- (17.0,0.0);
\draw [dash pattern=on 2pt off 2pt] (18.0,0.0)-- (18.0,7.0);
\draw [dash pattern=on 2pt off 2pt] (19.0,6.0)-- (19.0,0.0);
\draw [dash pattern=on 2pt off 2pt] (21.0,4.0)-- (21.0,0.0);
\draw [dash pattern=on 2pt off 2pt] (22.0,3.0)-- (22.0,0.0);
\draw [dash pattern=on 2pt off 2pt] (23.0,2.0)-- (23.0,0.0);
\draw [dash pattern=on 2pt off 2pt] (29.0,4.0)-- (29.0,0.0);
\draw [dash pattern=on 2pt off 2pt] (30.0,3.0)-- (30.0,0.0);
\draw [dash pattern=on 2pt off 2pt] (31.0,2.0)-- (31.0,0.0);
\draw [dash pattern=on 2pt off 2pt] (33.0,2.0)-- (33.0,0.0);
\draw [dash pattern=on 2pt off 2pt] (35.0,1.0)-- (35.0,0.0);
\begin{scriptsize}
\node[rounded corners,fill=white,text width=2.85cm,align=left] (exp1) at (30,14) [draw] {$t_i:=2^n-2^k+1+i$\\$r_i:=2^n-2^{k-1}-3+i$\\$s_i:=2^n-2^{k-2}-3+i$\\$q_i:=2^{n+1}-7+i$};
\draw [fill=qqqqff] (1.0,16.0) circle (1.5pt);
\draw [fill=qqqqff] (2.0,15.0) circle (1.5pt);
\draw [fill=qqqqff] (3.0,14.0) circle (1.5pt);
\draw [fill=qqqqff] (4.0,13.0) circle (1.5pt);
\draw [fill=qqqqff] (13.0,4.0) circle (1.5pt);
\draw [fill=qqqqff] (14.0,3.0) circle (1.5pt);
\draw [fill=qqqqff] (15.0,2.0) circle (1.5pt);
\draw [fill=qqqqff] (17.0,8.0) circle (1.5pt);
\draw [fill=qqqqff] (18.0,7.0) circle (1.5pt);
\draw [fill=qqqqff] (19.0,6.0) circle (1.5pt);
\draw [fill=qqqqff] (21.0,4.0) circle (1.5pt);
\draw [fill=qqqqff] (22.0,3.0) circle (1.5pt);
\draw [fill=qqqqff] (23.0,2.0) circle (1.5pt);
\draw [fill=qqqqff] (29.0,4.0) circle (1.5pt);
\draw [fill=qqqqff] (30.0,3.0) circle (1.5pt);
\draw [fill=qqqqff] (31.0,2.0) circle (1.5pt);
\draw [fill=qqqqff] (33.0,2.0) circle (1.5pt);
\draw [fill=qqqqff] (35.0,1.0) circle (1.5pt);
\draw [fill=qqqqff] (16.0,0.0) circle (1.5pt);
\draw [fill=qqqqff] (24.0,0.0) circle (1.5pt);
\draw [fill=qqqqff] (32.0,0.0) circle (1.5pt);
\draw [fill=qqqqff] (34.0,0.0) circle (1.5pt);
\end{scriptsize}
\end{tikzpicture}
\end{lemm}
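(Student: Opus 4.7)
The proof proceeds by strong induction on $j$, using the recursion from the preceding proposition: $N^{j+2}$ is the injective envelope of $\mathscr{M}^j$, where $\mathscr{M}^j$ fits in the exact sequence
$$0 \to \ell^j(F(1)) \to \cke{\partial_n^j} \to \mathscr{M}^j \to \ell^{j+1}(F(1)) \to 0.$$
The central input is Corollary \ref{calcul1}: $\ell^i(F(1))$ vanishes for odd $i$ and equals $F(1)/\Phi^n F(1)$, a module of top degree $2^{n-1}$, when $i = 2^n (2k+1)$ with $n \ge 1$. Since $N^{j+2}$ is a finite nilpotent injective in $\u$, it splits as $\bigoplus_\alpha J(m_\alpha)$ indexed by a basis of the socle of $\mathscr{M}^j$, so the problem reduces to identifying these socle elements.

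I would first pinpoint the \emph{jump} positions $j+2 = 2^n - 2^k + 1$. Here $j = 2^n - 2^k - 1$ is odd, so $\ell^j(F(1)) = 0$, while $j+1 = 2^n - 2^k = 2^k(2^{n-k}-1)$ gives $\ell^{j+1}(F(1)) \cong F(1)/\Phi^k F(1)$ whose top generator $u^{2^{k-1}}$ is itself a socle element: by unstability and the binomial vanishing $\binom{2^{k-1}}{i}\equiv 0 \pmod 2$ for $0 < i < 2^{k-1}$, every $Sq^i u^{2^{k-1}}$ with $i>0$ is either zero or equal to $u^{2^k}$, which lies in $\Phi^k F(1)$ and hence vanishes in the quotient. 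This socle element produces the summand $J(2^{k-1})$ of $N^{2^n-2^k+1}$; applied at $j = 2^n - 1$ the same mechanism gives $N^{2^n+1} = J(2^{n-1})$.

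Between two consecutive jumps I would handle the \emph{staircase} zone: both $\ell^j(F(1))$ and $\ell^{j+1}(F(1))$ vanish, so $\mathscr{M}^j = \cke{\partial_n^j}$. Minimality of the resolution forces each summand $J(m)$ of $N^{j+1}$ to contribute exactly one socle element to $\cke{\partial_n^j}$, whose degree is one lower than $m$, through the $Sq^1$-action on Brown--Gitler generators ($Sq^1 x_n = x_{n-1}^2$ by Miller's theorem). Iterating along the staircase yields the linear descent $\de{N^{2^n-2^k+l}} = 2^{k-1} - l + 1$ asserted in the lemma. The \emph{gap} values $N^{2^n-2^k} = 0$ and $N^{2^n} = 0$ then correspond to the point where the staircase has descended completely and the remaining cokernel $\cke{\partial_n^j}$ is entirely absorbed by the incoming copy of $\ell^j(F(1))$.

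The main obstacle is computing $\cke{\partial_n^j}$ precisely enough at each step, which requires controlling the differentials $\partial_n^j$ beyond what minimality alone gives. I would handle this by matching dimensions: on one hand the cohomology $\h^{j+1}(N^\bullet, \partial_n^\bullet) \cong \ell^j(F(1))$ is pinned down by Corollary \ref{calcul1}; on the other, the predicted staircase pattern yields an explicit Euler characteristic for the subcomplex $(N^\bullet, \partial_n^\bullet)$ in each degree. Any spurious summand $J(m)$ creeping into $N^{j+2}$ would inflate the cohomology beyond the bound from Corollary \ref{calcul1}, a contradiction. This same dimension-matching argument pins down the small cases $N^{2^n-1} = J(1)$ and $N^{2^n-2^k-1} = J(2)$, and establishes the $\f2$-dimensionality of the top degree of each $N^j$ throughout the staircase zone.
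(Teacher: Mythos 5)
Your skeleton matches the paper's: an induction driven by the recursion $N^{j+2}=E(\mathscr{M}^{j})$, with Corollary \ref{calcul1} controlling $\ell^{\bullet}(F(1))$ and the jumps at $j+2=2^{n}-2^{k}+1$ produced by the socle $\Sigma^{2^{k-1}}\f2$ of $\h_{k}=F(1)/\Phi^{k}F(1)$; that part is sound. The first genuine gap is your staircase mechanism. The claim that minimality forces each summand $J(m)$ of $N^{j+1}$ to contribute exactly one socle element of degree $m-1$ to $\cke{\partial_{n}^{j}}$ is false, and the paper's own tables refute it: $N^{18}=J(7,6)$ is followed by $N^{19}=J(6,4)$ (not $J(6,5)$), and $N^{19}=J(6,4)$ by $N^{20}=J(5)$ (not $J(5,3)$) --- neither the degrees nor the number of summands descend summand by summand. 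What the paper actually establishes in the staircase zone is only the top-degree statement: $\de{N^{j+2}}=\de{\mathscr{M}^{j}}$, read off from the four-term exact sequence together with the exactness of $N^{\bullet}$ in the top degree of $N^{j+1}$ (which is one-dimensional by induction and gets killed, so the top degree drops by exactly one); no claim is made, or needed, about the lower summands.

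The second gap concerns the exact identifications $N^{2^{n}-2^{k}-1}\cong J(2)$, $N^{2^{n}-2}\cong 0$ and $N^{2^{n}-1}\cong J(1)$, which require excluding extra low Brown--Gitler summands, above all copies of $J(1)$. Since $\ho{\u}{\Sigma\f2}{J(n)}=0$ for $n\neq 1$ and $\ho{\u}{\Sigma\f2}{R^{m}}=0$, minimality gives that the number of $J(1)$-summands of $N^{m}$ equals $\dim\ex{\u}{m}{\Sigma\f2}{F(1)}$; the paper therefore imports the vanishing $\ex{\u}{2^{q}+2^{q-1}-1}{\Sigma\f2}{F(1)}=0$ from the minimal projective resolution of $\Sigma\f2$ computed in \cite{Cuo14b}. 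Your dimension-matching argument cannot substitute for this external input: the relevant invariant is the cohomology of $\ho{\u}{\Sigma\f2}{I^{\bullet}}$, not of $N^{\bullet}$ itself, so a legitimate $J(1)$-summand would not inflate $\h^{*}(N^{\bullet},\partial_{n}^{\bullet})$ beyond the bound of Corollary \ref{calcul1}. (A minor organisational difference: the paper inducts on the exponent, using the full statement for $n\leq q-1$ to seed an inner induction on $s$ over the blocks ending at $2^{q+1}-2^{s}$, rather than a single strong induction on $j$.)
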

\begin{proof}
Il résulte des corollaires \ref{calcul1} et \ref{lemmeimp} 
que
\begin{longtable}{|c||*{11}{c|}}
\hline
\rowcolor{Gray}\makebox[0.3em]{$m$}&\makebox[3em]{$0$}& \makebox[3em]{$1$}& \makebox[3em]{$2$}&
\makebox[3em]{$3$}& \makebox[3em]{$4$}& \makebox[3em]{$5$}\\
\hline
$N^{m}$& $0$ & $0$& $0$& $J(1)$& $0$& $J(2)$\\\hline
\end{longtable}
Supposons que le lemme est vrai pour tout $n\leq q-1$. On vérifie le cas $n=q$. Par hypothèse de
récurrence, 
\begin{longtable}{|c||*{11}{c|}}
\hline
\rowcolor{Gray}$m$&$2^{q}-4$&$2^{q}-3$&$2^{q}-2$&$2^{q}-1$&$2^{q}$&$2^{q}+1$\\
\hline
$N^{m}$& $0$ & $J(2)$& $0$& $J\left(1\right)$& $0$& $J\left(2^{q-1}\right)$\\
\hline
\end{longtable}
Parce que, d'une part, 
$$\mathrm{d}\left(\mathscr{M}^{2^{q}}\right)=\mathrm{d}\left(\frac{\cke{\partial^{2^{q}}_{
n}}}{
\ell^{2^{q}}(F(1))}\right), $$
et d'autre part la suite 
$$N^{2^{q}+1}\to N^{2^{q}+2}\to N^{2^{q}+3}$$
est exacte en degré $2^{q}-1$ alors 
$$
\de{N^{2^q+2}}= 2^{q-1}-1\text{ et } \left(N^{2^q-1}\right)^{2^{q-1}-1}\cong\f2.
$$
De manière analogue on obtient des égalités pour $2\leq t\leq 2^{q-1}-1$:
$$
\de{N^{2^q+t}}= 2^{q-1}-t+1\text{ et } \left(N^{2^q+t}\right)^{2^{q-1}-t+1}\cong\f2.
$$
Le noyau $\mathscr{M}^{2^{q}+2^{q-1}-3}$ s'insère dans la suite exacte courte:
$$0\to \Sigma^{2}\f2\to \mathscr{M}^{2^{q}+2^{q-1}-3}\to J(1)\to 0.$$
Il découle (voir \cite{Cuo14b}) de la résolution projective minimale de $\Sigma\f2$ que
$$\ex{\u}{2^{q}+2^{q-1}-1}{\Sigma\f2}{F(1)}=0.$$ 
Alors $N^{2^{q}+2^{q-1}-1}$ ne contient aucune copie de $J(1)$ et donc 
$$N^{2^{q}+2^{q-1}-1}\cong J(2).$$
Une récurrence simple sur $s$ tel que $q>s\geq 2$ montre que:
$$
N^{2^{q+1}-2^s-1}\cong J(2),\quad N^{2^{q+1}-2^s}\cong 0,\quad N^{2^{q+1}-2^s+1}\cong J(2^{s-1}),$$
$$
\de{N^{2^{q+1}-2^s+r}}= 2^{s-1}-r+1\text{ et } \left(N^{2^{q+1}-2^s+r}\right)^{2^{s-1}-r+1}\cong\f2,
$$
pour $2\leq r\leq 2^{s-1}-1$. Il en découle que $N^{2^{q+1}-2}\cong 0$ et donc
$N^{2^{q+1}-1}\cong J(1)$. Le lemme en résulte.
\end{proof}
Comme une conséquence du lemme \ref{rs4}, $\mathscr{M}^{2k+1}$ est une extension triviale de
$\h^{2k+2}(R^{\bullet},\partial_{r}^{\bullet})$ par $\cke{\partial_n^{2k+1}}$ si $k$ est de la
forme $2^{n}-2^{m}-1$. On montre en fait qu'elles sont les seules
extensions triviales. Avant de détailler cette classification des extensions dans la section 5, on étudie un phénomène de périodicité de la partie nilpotente de la résolution injective de $F(1).$
\section{Un phénomène de périodicité}\label{perif1}
On rappelle que $\ex{\F}{*}{I}{I}$ est engendrée par $e_{n}\in
\ex{\F}{2^{n+1}}{I}{I}$ en tant qu'algèbre (voir le théorème \ref{fls94}).
\begin{notat}\label{yoneda1} Soient deux entiers $n>k\geq 2$. On désigne: $$e(n,k)=e_{n-2}e_{n-3}\ldots e_{k}.$$
\end{notat}
Le cup-produit avec $e(n,k)$ induit un isomorphisme \cite[proposition 7.2]{FLS94}
\begin{equation}\label{perio1}
\ex{\F}{0}{I}{I}\xrightarrow{\smile e(n,k)}\ex{\F}{2^n-2^{k+1}}{I}{I}.
\end{equation}

On désigne par $\gamma\in\ho{\F}{I}{f\left(R^{0}\right)}$ le
morphisme qui représente l'unité $1$ et par
$\delta\in\ho{\F}{I}{f\left(R^{2^n-2^{k+1}}\right)}$ celui qui représente $e(n,k)$. Puisque
$\left(f(R^{*}),f(\partial^{*})\right)$ est une résolution injective dans $\F$ du foncteur $I$,
l'isomorphisme (\ref{perio1}) signifie que $\delta$ se factorise à travers $\gamma$ \textit{via} un
morphisme $$\gamma_0:f(R^0)\to f(R^{2^n-2^{k+1}}).$$
Grâce à l'exactitude de la suite 
$$I\hookrightarrow f(R^0)\to f(R^1)\to\cdots\to f(R^n)\to\cdots$$
et à l'injectivité des foncteurs $f(R^i)$, on obtient un morphisme de complexes:
$$\gamma_{\bullet}:f(R^{\bullet})\to f\left(R^{2^{n}-2^{k+1}+\bullet}\right)$$
tel que les morphismes induits en cohomologie sont les cup-produits avec $e(n,k)$. On rappelle que
le foncteur $f:\u\to\F$ admet un adjoint à droite qu'on désigne par $m$. Puisque les modules $R^{i}$
sont $\nil-$fermés, alors 
$$\left(m(\gamma_{\bullet}) \right):R^{\bullet}\to
R^{\bullet+2^{n}-2^{k}}$$ 
est un morphisme de complexes. Désignons par 
$\alpha^{i}$ le morphisme $m(\gamma_{i})$. 
Alors pour $0\leq t\leq 2^{k-1}-1$, on a 
\begin{align*}
\alpha^{2^{k}+2t}(u)=u
\end{align*}
et donc le morphisme $\alpha^{2^{k}+2t}$ induit un isomorphisme
$$\ell^{2^k+2t}(F(1))\overset{\sim}{\to}\ell^{2^n-2^k+2t}(F(1)).$$
\begin{theo}[Périodicité]\label{princh42}
Étant donné $n>k\geq 2$ on a:
\begin{align*}
\mathscr{M}^{2^n-2^k+t-2}&\cong \mathscr{M}^{2^k+t-2},\\
N^{2^n-2^k+t}&\cong N^{2^k+t},
\end{align*}
pour tout $0\leq t\leq 2^{k}-1$.
\end{theo}
\begin{proof}
Il suit du lemme \ref{rs4} que $N^{2^n-2^k}$ et $N^{2^{k}}$ sont triviaux.
Alors 
\begin{align*}
\cke{\partial_{r}^{2^{k}}}&\cong\cke{\partial^{2^{k}}}\\
\cke{\partial_{r}^{2^{n}-2^{k}}}&\cong\cke{\partial^{2^{n}-2^{k}}}.
\end{align*}
Puisque $I^{2^{n}-2^{k}+1}$ est injectif, il existe $\beta^{2^{k}+1}:I^{2^{k}+1}\to I^{2^{n}-2^{k}+1}$ qui fait commuter le diagramme
\begin{equation}\label{existe13}
\xymatrixrowsep{1.5pc}\xymatrixcolsep{3pc}\xymatrix{
I^{2^{k}}\ar[r]^{}\ar[d]^{\alpha^{2^{k}}}&
\cke{\partial^{2^{k}-1}}\ar[d]^{\alpha^{2^{k}}}\ar@{^{(}->}[r]&
I^{2^{k}+1}\ar@{-->}[d]^{\exists\beta^{2^{k}+1}} \\
I^{2^n-2^{k}}\ar[r]_{}& \cke{\partial^{2^{n}-2^{k}-1}}\ar@{^{(}->}[r]& I^{2^{n}-2^{k}+1}
}\end{equation}
Parce que, d'une part 
$$
N^{2^{k}+1}\cong J\left(2^{k-1}\right)\cong N^{2^{n}-2^{k}+1}
$$
et d'autre part
$$
\pa{2^{k}}{u}=x_{k-1}=\pa{2^{n}-2^{k}}{u}
$$
alors la restriction de $\beta^{2^{k}+1}$ sur $N^{2^{k}+1}$ est un automorphisme de
$J\left(2^{k-1}\right)$. Puisque les $I^{j}$ sont injectifs, le diagramme \eqref{existe13} montre
qu'il existe un morphisme de complexes:
$$\beta^{2^{k}+\bullet}:I^{2^{k}+\bullet}\to I^{2^{n}-2^{k}+\bullet}.$$
Supposons que $\beta^{2^{k}+t}$ entraîne 
\begin{align*}
\mathscr{M}^{2^n-2^k+t-2}&\cong \mathscr{M}^{2^k+t-2},\\
N^{2^n-2^k+t}&\cong N^{2^k+t},
\end{align*}
pour tout $1\leq t< m< 2^{k-1}-1$. On passe au cas
$t=m$. Par hypothèse de récurrence, $\beta^{2^{k}+m}$  induit un diagramme commutatif
$$\xymatrixcolsep{1pc}\xymatrixrowsep{1.5pc}\xymatrix{
\ell^{2^k+m-2}(F(1))\ar@{=}[d]\ar@{^{(}->}[r]&\cke{\partial_n^{2^{k}+m-2}}\ar@{=}[d]\ar[r]&\mathscr{M}^{2^{k}+m-2}\ar[d]\ar@{->>}[r]&\ell^{2^k+m-1}(F(1))\ar@{=}[d]\\
\ell^{s-2}(F(1))\ar@{^{(}->}[r]&\cke{\partial_n^{\alpha-2}}\ar[r]&\mathscr{M}^{\alpha-2}\ar@{->>}[r]
&\ell^{\alpha-1}(F(1))
}$$
où $s=2^n-2^k+m$. On peut conclure la récurrence et le théorème en découle.
\end{proof}

\section{Sur les extensions $\mathscr{M}^{2k+1}$}
Le cœur de cette section est le théorème suivant:
\begin{theo}\label{exnontriv1}
Les extensions $\mathscr{M}^{2k+1}$ sont triviales si et seulement si $k$ est de la forme
$2^{n}-2^{m}-1,$ où $n>m$.
\end{theo}
Afin de démontrer ce théorème, on introduit ci-dessous une interprétation de la non-trivialité de
l'extension
$\mathscr{M}^{k}$. 
\begin{prop}
Soit $k=2^{r}(2m+1)-1,r\geq 1$. Alors l'extension 
$$\mathscr{M}^{k}\in\ex{\u}{1}{\frac{F(1)}{\Phi^{r}F(1)}}{\cke{\partial_{n}^{k}}}$$
n'est pas triviale si et seulement si l'une des deux conditions équivalentes suivantes est
satisfaite: 
\begin{enumerate}
\item l'élément $\omega^{k+1}\left(u^{2^{r}}\right)$ n'est pas nul;
\item il existe $v$ dans $R^{k}$ tel que $\partial_{r}^{k}(v)=u^{2^{r}}$ et que
$\partial^{k}(v)-u^{2^{r}}$ n'est pas trivial. 
\end{enumerate}
\end{prop}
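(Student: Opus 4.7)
The plan is to exploit that $k = 2^{r}(2m+1)-1$ with $r\geq 1$ is odd, so by corollaire \ref{calcul1} one has $\ell^{k}(F(1)) = 0$; consequently the four-term exact sequence displayed just before the proposition collapses to the short exact sequence
$$0 \to \cke{\partial_{n}^{k}} \to \mathscr{M}^{k} \to F(1)/\Phi^{r}F(1) \to 0.$$
Moreover $u^{2^{r}} = \Phi^{r}u$ represents $0$ in $\ell^{k+1}(F(1)) = F(1)/\Phi^{r}F(1)$, so there always exists $v\in R^{k}$ with $\partial_{r}^{k}(v) = u^{2^{r}}$; this secures the first half of condition (2) for free.

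For the equivalence of (1) and (2), I would expand $\partial^{k+1}\partial^{k} = 0$ in the matrix form \eqref{omegal}, which gives the relation $\partial_{n}^{k+1}\omega^{k} + \omega^{k+1}\partial_{r}^{k} = 0$. Evaluated on such a $v$, this yields the key identity $\partial_{n}^{k+1}\omega^{k}(v) = \omega^{k+1}(u^{2^{r}})$. Thus (1) forces $\omega^{k}(v) \neq 0$, whence $\partial^{k}(v) - u^{2^{r}} = (\omega^{k}(v), 0)$ is non-trivial in $I^{k+1}$; this is (2). For the converse, I would use that $\h^{k+1}(N^{\bullet}, \partial_{n}^{\bullet}) \cong \ell^{k}(F(1)) = 0$, which makes the induced map $\overline{\partial_{n}^{k+1}}: \cke{\partial_{n}^{k}} \to N^{k+2}$ injective; combined with the fact that two admissible choices of $v$ differ by an element of $\ker\partial_{r}^{k} = \text{Im}\,\partial_{r}^{k-1}$ (using once more $\ell^{k}(F(1))=0$) and that the identity $\omega^{k}\partial_{r}^{k-1} = -\partial_{n}^{k}\omega^{k-1}$ pushes such ambiguities into $\text{Im}\,\partial_{n}^{k}$, the class $[\omega^{k}(v)] \in \cke{\partial_{n}^{k}}$ is well-defined and the vanishing of $\omega^{k+1}(u^{2^{r}})$ allows one to modify $v$ to achieve $\omega^{k}(v) = 0$.

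For the equivalence with non-triviality of $\mathscr{M}^{k}$: in the easy direction, if $\omega^{k+1}(u^{2^{r}}) = 0$ then $(0, u^{2^{r}}) \in I^{k+1}$ is a cycle, hence by exactness a boundary, and the assignment $s: [u] \mapsto [(0, u)]$ extends to a well-defined splitting of $\mathscr{M}^{k} \to F(1)/\Phi^{r}F(1)$ because $Sq_{0}^{r} \cdot s([u]) = [(0, u^{2^{r}})] = 0$ in $\cke{\partial^{k}}$. For the converse, I would invoke the structural observation from Section 3 that $N^{k+1}$ (with $k+1$ even) contains no Brown-Gitler summand of type $J(2^{m})$, so that $(N^{k+1})^{1} = 0$; this forces any splitting to take the form $s([u]) = [(0, u)]$, and its compatibility with the relation $Sq_{0}^{r}[u] = 0$ in $F(1)/\Phi^{r}F(1)$ then demands precisely $(0, u^{2^{r}}) \in \text{Im}\,\partial^{k}$, equivalently $\omega^{k+1}(u^{2^{r}}) = 0$.

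The main obstacle will be the structural input that $(N^{k+1})^{1} = 0$ for $k+1$ even, which is what forces the splitting to be essentially unique and thereby drives the converse; verifying this rests on the minimality of the resolution together with the known description of the degree-one generators $\omega^{2l}(u) = x_{n_{l}}$ in the odd-indexed terms $N^{2l+1}$.
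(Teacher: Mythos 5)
Your treatment of the equivalence between (1) and (2) is sound and in fact more explicit than the paper's: the relation $\partial_{n}^{k+1}\omega^{k}+\omega^{k+1}\partial_{r}^{k}=0$ extracted from $\partial^{k+1}\partial^{k}=0$, together with $\ell^{k}(F(1))=0$ (hence $\ke{\partial_{r}^{k}}=\im{\partial_{r}^{k-1}}$ and $\h^{k+1}(N^{\bullet})=0$), does show that the class of $\omega^{k}(v)$ in $\cke{\partial_{n}^{k}}$ is independent of the choice of $v$ and vanishes exactly when $\omega^{k+1}(u^{2^{r}})=0$. The existence of $v$, and the direction ``$\omega^{k+1}(u^{2^{r}})=0$ implies the extension splits'' (via $Sq_{0}^{r}[(0,u)]=[(0,u^{2^{r}})]=0$ and the presentation of $\h_{r}$ as a cyclic module with the single relation $Sq_{0}^{r}u=0$), are also correct and agree in substance with the paper.

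The gap is in the converse, exactly where you flag it. You need every splitting $s$ to satisfy $s([u])=[(0,u)]$, and for that you invoke $(N^{k+1})^{1}=0$, i.e.\ that $N^{k+1}$ (with $k+1$ even) contains no Brown--Gitler summand of type $J(2^{m})$. This does not follow from minimality plus the identity $\omega^{2l}(u)=x_{n_{l}}$: that identity concerns the odd-indexed terms $N^{2l+1}$ and gives no control over $N^{2l}$. In fact the absence of $J(2^{m})$-summands in $N^{2l}$ is precisely the unproven hypothesis of the proposition \ref{calculessentiel}, which the paper only verifies in the ranges of the theorem \ref{princh43}; were it available in general, the conjecture \ref{conj1} would essentially follow. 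If $(N^{k+1})^{1}\neq 0$, a splitting could send $[u]$ to $[(\tilde{c},u)]$ with $\tilde{c}\neq 0$, and the relation $Sq_{0}^{r}s([u])=0$ then only yields $\omega^{k+1}(u^{2^{r}})=Sq_{0}^{r}\partial_{n}^{k+1}(\tilde{c})$, which you cannot conclude is zero. The paper's own proof sidesteps this by working inside the injective hull $N^{k+2}$ of $\mathscr{M}^{k}$: it identifies the Brown--Gitler factor $J(2^{m_{\alpha}})$ with $\omega^{k+1}(u)=x_{m_{\alpha}}$, shows $m_{\alpha}\geq r-1$ because $\omega^{k+1}(u^{2^{r-1}})\neq 0$, and reads off triviality of the extension as the equality $m_{\alpha}=r-1$ (the hull of a split copy of $\h_{r}$ being $J(2^{r-1})$), so that non-triviality becomes $Sq_{0}^{r}x_{m_{\alpha}}=\omega^{k+1}(u^{2^{r}})\neq 0$. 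You should either adopt that route or supply an independent argument for $(N^{k+1})^{1}=0$.
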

\begin{proof}
Le module $R^{k+1}$ est $1-$connexe et 
\begin{align*}
\left(R^{k+1}\right)^{1}&\cong\left(\tilde{\h}^{*}\z/2\right)^{1}\cong\langle u\rangle.
\end{align*} 
Donc il résulte de l'exactitude de la résolution 
$\left\{I^{\bullet},\partial^{\bullet}\right\}$ de $F(1)$ que $N^{k+2}$ contient un facteur direct
$J(2^{m_{\alpha}})$ tel que
$$\omega^{k+1}(u)=x_{m_{\alpha}}.$$
Parce que $\ell^{k+1}(F(1))=\h_{r}$ alors $$\omega^{k+1}(u^{2^{r-1}})=
Sq^{2^{r-2}}Sq^{2^{r-3}}\ldots Sq^{1}x_{m_{\alpha}}.$$ 
Il suit que $m_{\alpha}\geq r-1$ et l'extension $\mathscr{M}^{k}$ n'est pas triviale si et
seulement si cette inégalité est stricte. C'est équivalent à dire que $\omega^{k+1}(u^{2^{r}})$
n'est
pas nul. En d'autres termes, $u^{2^{r}}$ n'est pas un cobord. Par ailleurs (voir le corollaire
\ref{calcul1}), la suite 
$$\left(R^{k}\right)^{2^{r}}\to \left(R^{k+1}\right)^{2^{r}}\to
\left(R^{k+2}\right)^{2^{r}}$$
est exacte  donc il existe $v\in \left(R^{k}\right)^{2^{r}}$ tel
que $\partial^{k}_{r}(v)=u^{2^{r}}$. Il en résulte que $\partial^{k}(v)\neq u^{2^{r}}$. Alors,
la trivialité
de l'extension $\mathscr{M}^{k}$ est équivalente à la non-trivialité de
$\partial^{k}(v)-u^{2^{r}}$.
\end{proof}
\begin{defi}
Soit $k=2^{r}(2l+1)-1$. L'extension $\mathscr{M}^{k}$ est dite
$Sq^{1}-$non-triviale si il existe $v\in R^{k}$ tel que
$Sq^{1}\left(\partial^{k}(v)-u^{2^{r}}\right)\neq 0$ et que $\partial_{r}^{k}(v)=u^{2^{r}}$. 
\end{defi}
On constate que la $Sq^{1}-$non-trivialité entraîne la non-trivialité des extensions. Alors le
théorème \ref{exnontriv1} se découle du lemme suivant.
\begin{lemm}\label{prelem1}
Si $\mathscr{M}^{2k+1}$ est $Sq^1-$non-triviale pour tout $k< 2^{n}$ n'étant pas de la forme
$2^{m}-2^{l}-1$ où $m>l$, alors il en est de même pour $\mathscr{M}^{2k+1}$ où $k< 2^{n+1}$ n'étant
pas
de la forme $2^{m}-2^{l}-1$ où $m>l$.
\end{lemm}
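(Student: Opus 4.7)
Le plan est d'utiliser la périodicité du théorème \ref{princh42} pour ramener les indices $2k+1$ avec $k\in[2^n,2^{n+1})$ (et $k$ non-exceptionnel) à des indices $2k'+1$ avec $k'<2^n$ (encore non-exceptionnels), pour lesquels l'hypothèse du lemme s'applique directement.

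Étant donné un tel $k$, on cherche une paire d'entiers $(q,s)$ avec $q>s\geq 2$ et un entier $t\in[0,2^s-1]$ tels que $2k+1=2^q-2^s+t-2$ et que l'indice réduit $2k'+1:=2^s+t-2$ vérifie $k'<2^n$. Le choix naturel est $q=n+2$ et $s$ le plus petit entier tel que $2^{n+2}-2^s-2\leq 2k+1$. Le théorème \ref{princh42} fournit alors un isomorphisme $\mathscr{M}^{2k+1}\cong\mathscr{M}^{2k'+1}$, induit par le morphisme de complexes $\beta^\bullet:I^{2^s+\bullet}\to I^{2^{q}-2^s+\bullet}$ construit dans la preuve de ce théorème. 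Ce $\beta^\bullet$ est $\mathcal{A}_2$-linéaire, commute avec les différentielles $\partial^\bullet$ et $\partial_r^\bullet$, et envoie le générateur $u\in R^{2k'+1}$ sur $u\in R^{2k+1}$ (puisque $\alpha^{2^s+2t}(u)=u$ d'après la section \ref{perif1}); tout témoin $v$ de la $Sq^1$-non-trivialité au niveau $2k'+1$ se transporte donc par $\beta$ en un témoin $\beta(v)$ au niveau $2k+1$. La non-exceptionnalité de $k'$ se déduit de celle de $k$ par comparaison des expansions binaires de $2k+1$ et de $2k'+1$, qui coïncident sur leurs bits inférieurs à $2^s$: un motif $2^{m+1}-2^{l+1}-1$ dans l'une des deux expressions se relèverait automatiquement à l'autre, contredisant l'hypothèse sur $k$.

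La difficulté principale réside dans le choix effectif des paramètres $(q,s)$ amenant strictement en dessous de $2^n$: pour certains indices proches de $2^n$ (notamment $k=2^n$ lui-même), le choix $q=n+2$ force $s=n+1$, ce qui conduit à la périodicité triviale $k'=k$. On résoudra ces cas résiduels en appliquant la périodicité avec d'autres paramètres, ou en passant par le produit de Yoneda avec les classes $e_m\in\ex{\F}{2^{m+1}}{I}{I}$ pour $m<n$, qui décalent les indices de $2^{m+1}$ et préservent la $Sq^1$-non-trivialité par compatibilité entre la structure multiplicative de $\ex{\F}{*}{I}{I}$ et l'action de Steenrod. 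La vérification de cette compatibilité, combinée avec la description du lemme \ref{rs4} des termes $N^l$ et de l'unique facteur $J(2^{m_l})$ recevant $\omega^l(u)$, constituera le cœur technique de la démonstration; elle repose en dernier ressort sur le fait que $\beta^\bullet$ restreint à un isomorphisme entre les facteurs de Brown-Gitler correspondants de $N^{2k'+2}$ et de $N^{2k+2}$, et sur la non-trivialité de $Sq^1 x_k=x_{k-1}^2$ dans chaque $J(2^k)$.
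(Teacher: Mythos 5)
Votre idée directrice (transférer la $Sq^1$-non-trivialité des indices $k<2^n$ vers les indices $k\in[2^n,2^{n+1})$ au moyen de morphismes de comparaison entre complexes) est bien celle de l'article, mais telle quelle votre réduction ne fonctionne pas. Pour un exposant $s=2k+1\in[2^{n+1}+1,2^{n+1}+2^{n}-3]$, la seule écriture $s=2^{N}-2^{K}+t-2$ avec $0\leq t\leq 2^{K}-1$ est $(N,K)=(n+2,n+1)$, qui donne l'identification triviale $\mathscr{M}^{s}\cong\mathscr{M}^{s}$ : le théorème \ref{princh42} ne ramène donc \emph{aucun} indice de toute la moitié inférieure de la nouvelle plage en dessous de $2^{n}$, et non pas seulement quelques indices proches de $2^{n}$. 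L'article contourne ce point en construisant un morphisme de complexes $\beta^{\bullet}:I^{2^{n+1}+\bullet}\to I^{2^{n+1}+2^{n}+\bullet}$ qui n'est \emph{pas} un isomorphisme, en établissant d'abord la non-trivialité au but par périodicité (le but, d'exposant $2^{n+1}+2^{n}+2q+1$, se ramène lui à un indice $<2^{n}$), puis en la redescendant à la source dans le sens contravariant : si $\beta(w)\neq 0$ alors $w\neq 0$. Votre transport en avant d'un témoin $v$ en $\beta(v)$ exigerait l'injectivité de $\beta$ sur les éléments concernés, laquelle n'est pas acquise pour un simple morphisme de comparaison ; ce n'est légitime que lorsque \ref{princh42} fournit un véritable isomorphisme.

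La seconde lacune est plus grave : les cas véritablement résiduels sont les exposants $2k+1=2^{n+1}+2^{n}-2^{q}-1$, pour lesquels la chaîne de transfert aboutit à l'indice exceptionnel $2^{n+1}-2^{q}-1$, où l'extension $\mathscr{M}$ est \emph{triviale}. Il n'y a alors rien à transférer, et ni la périodicité avec d'autres paramètres ni le cup-produit par les classes $e_{m}$ ne peuvent conclure, puisque ces deux outils sont un seul et même mécanisme (les morphismes de périodicité sont précisément induits par le produit de Yoneda avec les $e(n,k)$). L'article traite ces cas par un argument de nature entièrement différente : les $\left(u^{2^{q}}\to u^{2^{n}-1}\right)$-suites de l'appendice (définition \ref{keybefore}, corollaire \ref{keyfinal}), construites à partir de la $Sq^1$-acyclicité des injectifs réduits, qui produisent explicitement $\partial^{2^{n+1}+2^{n}-2^{q}-1}(v_{0})=u^{2^{q}}+x_{0}^{2^{q}-1}x_{1}$ et donc la $Sq^1$-non-trivialité cherchée. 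C'est exactement l'ingrédient que votre proposition renvoie au \emph{cœur technique} sans le fournir ; en l'état, la démonstration est incomplète sur ces deux points.
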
 
\subsection*{Démonstration du théorème principal}
Le théorème \ref{exnontriv1} est démontré par récurrence. La preuve sera divisée en deux étapes. 
D'après le lemme \ref{rs4}, le théorème \ref{exnontriv1} est vrai pour $k<2$. On peut donc établir
l'argument de récurrence. 

Dans un premier temps, on démontre que si $\mathscr{M}^{2^{r}+t},0\leq t\leq 2^{r-1}-2$ n'est pas
$Sq^{1}-$triviale pour $n\geq r$ alors il en est de même pour $\mathscr{M}^{2^{n+1}-2^{r}+t}.$

Puisque si $t$ est de la forme $2^{r}-2^{k}-1, k\leq r-1,$ l'extension
$\mathscr{M}^{2^{r}+t}$ est triviale alors dans un deuxième temps, on montre que
$\mathscr{M}^{2^{n+1} +2^{n}-2^{r}-1}$ est $Sq^{1}-$non triviale pour $1\leq r<n.$ 

\subsubsection*{Première partie de la preuve}
Soit $q=2^{r-1}(2s+1)-1< 2^{n-2}-1$. On se place dans une situation similaire que celle de la
preuve du théorème \ref{princh42}. Alors il existe un morphisme de complexes 
$$\beta^{\bullet}:I^{2^{n}+\bullet}\to
I^{2^{n}+2^{n-1}+\bullet},$$
tel que pour $j\leq 2^{n-2}$
$$\beta^{2j}(u)=u.$$

On va montrer que la $Sq^{1}-$non-trivialité de $\mathscr{M}^{2^{n}+2^{n-1}+2q+1}$ entraîne la
$Sq^{1}-$non-trivialité de $\mathscr{M}^{2^{n}+2q+1}.$

Supposons que $\mathscr{M}^{2^{n}+2^{n-1}+2q+1}$ n'est pas
$Sq^{1}-$triviale. On va montrer que
$u^{2^{r}}\in I^{2^{n}+2q+2}$ 
n'est pas un cobord. On suppose par l'absurde qu'il l'est. Alors il existe $x\in
R^{2^{n}+2q+1}$ tel
que 
$$\partial^{2^{n}+2q+1}(x)=u^{2^{r}}.$$
Il en résulte que
\begin{align*}
u^{2^{r}}=\beta^{2^{n}+2q+2}\left(u^{2^{r}}\right)=\partial^{2^{n}+2^{n-1}+2q+1}\left(\beta^{2^{n
}+2q+1} (x)\right)
\end{align*}
ce qui contredit la non-trivialité de $\mathscr{M}^{2^{n}+2^{n-1}+2q+1}$. Soit $z\in R^{2^{n}+2q+1}$
tel que 
$$\partial_{r}^{2^{n}+2q+1}(x)=u^{2^{r}}.$$ 
Un tel $z$ existe à cause de l'exactitude de la
suite
$$\left(R^{2^{n}+2q+1}\right)^{2^{r}}\to \left(R^{2^{n}+2q+2}\right)^{2^{r}}\to
\left(R^{2^{n}+2q+3}\right)^{2^{r}}.$$
Notant $m:=2^ {n}+2^{n-1}$, il s'ensuit que:
\begin{align*}
\morp{\partial_{r}^{m+2q+1}}{\beta^{2^{n} +2q+1}\left(x\right)}&=u^{2^{r}},\\
\morp{\beta^{2^{n}+2q+2}}{Sq^{1}\left(\partial^{2^{n}+2q+1}(x)-u^{2^{r}}\right)}&=Sq^1\left(\morp{
\partial^{m+2q+1}} {\beta^{2^{n} +2q+1}\left(x\right)}-u^{2^{r}}\right).
\end{align*}
Il en découle que $\mathscr{M}^{2^{n}+2q+1}$ est $Sq^{1}-$non-triviale. 

Par hypothèse de récurrence
de la $Sq^{1}-$non-trivialité de $\mathscr{M}^{2^{n}+2^{n-1}+2q+1}$ pour $q<2^{n-2}-1$, les
$\mathscr{M}^{2^{n}+2q+1}$ ne sont pas $Sq^{1}-$triviales si $q\leq 2^{n-2}-1$ n'étant pas de la
forme
$2^{n-2}-2^{k}-1,k\leq n-2$. 

\subsubsection*{Suite de la preuve}
Soient $q<n-1$ et $u^{2^{q}}\in R^{2^{n}+2^{n-1}-2^{q}}$. Il découle du corollaire \ref{keyfinal}
qu'il existe une $\left(u^{2^{q}}\to
u^{2^{n-1}-1}\right)-$suite (voir la définition \ref{keybefore})
$$\left\{v_i\in
R^{2^{n}+2^{n-1}-2^{q}-1-i}\left|Sq^{1}v_i\neq 0,0\leq i\leq 2^{n-1}-2^{q}-2 \right.\right\}.$$
Comme 
$$\pa{2^n}{u^{2^{n-1}}}=x_0^{2^{n-1}},$$ 
alors
$$\partial^{2^{n}}(u^{2^{n-1}-1})=Sq^{1}v_{2^{n-1}-2^{q}-2}+x_0^{2^{n-1}-2}x_1.$$
Il découle de
$$\partial_{n}^{2^{n-1}+1}\left(x_0^{2^{n-1}-2}x_1\right)=x_0^{n^{n-1}-1}$$
que
$$\partial^{2^{n}-1}(v_{2^{n-1}-2^{q}-2})=Sq^{1}v_{2^{n-1}-2^{q}-3}+x_0^{2^{n-1}-3}x_1.$$
De manière similaire, on obtient:
$$\partial^{2^{n}+2^{n-1}-2^{q}-1}(v_0)=u^{2^{q}}+x_0^{2^{q}-1}x_1.$$
Il résulte que $\mathscr{M}^{2^{n}+2^{n-1}-2^{q}-1}$ est
$Sq^{1}-$non-triviale.

Le lemme \ref{prelem1} permet de conclure le théorème \ref{exnontriv1} par récurrence.

\section{Les cas particuliers}

Le théorème \ref{princh42} permet de concentrer au calcul de $N^{2^{k}+m}$ pour $m<2^{k-1}$. Dans le cadre de cet article, on peut effectuer les calculs pour $1\leq m\leq 4$.
\subsection{Les $N^{2^{k}+2}$}
Les modules $N^{2^{k}+2}$ sont calculés à l'aide du lemme suivant dont la preuve est laissée aux lecteurs:
\begin{lemm}
La suite 
$$N^{2^{k}+1}\to N^{2^{k}+2}$$
fournit les deux premiers termes de la résolution injective minimale de $\h_{k}$.
\end{lemm}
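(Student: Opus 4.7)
Mon approche consiste à vérifier successivement les trois propriétés caractérisant le début d'une résolution injective minimale de $\h_{k}$ (on suppose $k\geq 1$): que $\ke{\partial_{n}^{2^{k}+1}} = \h_{k}$, que $N^{2^{k}+1}$ est l'enveloppe injective de $\h_{k}$, et que $N^{2^{k}+2}$ est l'enveloppe injective du conoyau $N^{2^{k}+1}/\h_{k}$. Le point délicat sera le dernier : l'essentialité de $\im{\partial_{n}^{2^{k}+1}}$ dans $N^{2^{k}+2}$, qui ne suit pas directement de la minimalité globale de $(I^{\bullet}, \partial^{\bullet})$ à cause de la présence éventuelle de contributions par les $\omega^{l}$.

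Pour la première étape, la suite exacte courte $0\to N^{\bullet}\to I^{\bullet}\to R^{\bullet}\to 0$, combinée à l'acyclicité de $I^{\bullet}$ en degrés strictement positifs, fournit $\h^{j+1}(N^{\bullet}) \cong \h^{j}(R^{\bullet}) \cong \ell^{j}(F(1))$ pour $j\geq 1$ via le corollaire \ref{reduitf1}. Le corollaire \ref{calcul1} donne alors $\h^{2^{k}+1}(N^{\bullet}) \cong \h_{k}$ et $\h^{2^{k}+2}(N^{\bullet}) \cong \ell^{2^{k}+1}(F(1)) = 0$, puisque $2^{k}+1$ est impair. Comme $N^{2^{k}}=0$ par le lemme \ref{rs4}, on en déduit $\ke{\partial_{n}^{2^{k}+1}} = \h_{k}$ et $\im{\partial_{n}^{2^{k}+1}} = \ke{\partial_{n}^{2^{k}+2}}$.

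Pour la deuxième étape, le lemme \ref{rs4} donne $N^{2^{k}+1}\cong J(2^{k-1})$. Le module $\h_{k} = F(1)/\Phi^{k}F(1)$ admet la base $\{u, u^{2}, u^{4}, \ldots, u^{2^{k-1}}\}$; son socle est donc le sous-module simple $\langle u^{2^{k-1}}\rangle \cong \Sigma^{2^{k-1}}\f2$ en degré $2^{k-1}$. Comme $J(2^{k-1})$ est par définition l'enveloppe injective de $\Sigma^{2^{k-1}}\f2$, l'inclusion $\h_{k} \hookrightarrow N^{2^{k}+1}$ est une enveloppe injective.

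Reste le point délicat. $N^{2^{k}+2}$ étant un facteur direct de l'injectif $I^{2^{k}+2}$, il suffit de montrer que $\im{\partial_{n}^{2^{k}+1}}$ est essentiel dans $N^{2^{k}+2}$. Soit $L\subseteq N^{2^{k}+2}$ un sous-module non nul. Par minimalité de la résolution $(I^{\bullet}, \partial^{\bullet})$, $\im{\partial^{2^{k}+1}}$ est essentielle dans $I^{2^{k}+2}$, donc il existe $0\neq x \in L \cap \im{\partial^{2^{k}+1}}$; soit $x = \partial^{2^{k}+1}(y_{n}+y_{r})$ avec $y_{n}\in N^{2^{k}+1}$, $y_{r}\in R^{2^{k}+1}$. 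L'appartenance $x\in N^{2^{k}+2}$ force l'annulation de la composante réduite : $\partial_{r}^{2^{k}+1}(y_{r}) = 0$. Puisque $\h^{2^{k}+1}(R^{\bullet}) \cong \ell^{2^{k}+1}(F(1)) = 0$, on a $y_{r} = \partial_{r}^{2^{k}}(z_{r})$ pour un certain $z_{r}\in R^{2^{k}}$. La relation $\partial^{2^{k}+1}\partial^{2^{k}}=0$, lue sur le bloc $(1,2)$ de la forme matricielle \eqref{omegal}, donne $\omega^{2^{k}+1}\partial_{r}^{2^{k}} = \partial_{n}^{2^{k}+1}\omega^{2^{k}}$ en caractéristique $2$, d'où $\omega^{2^{k}+1}(y_{r}) = \partial_{n}^{2^{k}+1}(\omega^{2^{k}}(z_{r}))$ et finalement $x = \partial_{n}^{2^{k}+1}(y_{n} + \omega^{2^{k}}(z_{r})) \in \im{\partial_{n}^{2^{k}+1}}$, ce qui établit l'essentialité cherchée.
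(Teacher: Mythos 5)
Le texte laisse la preuve de ce lemme aux lecteurs, il n'y a donc pas d'argument dans l'article auquel comparer le vôtre. Votre démonstration est correcte et complète : l'identification $\ke{\partial_{n}^{2^{k}+1}}\cong\h^{2^{k}+1}(N^{\bullet})\cong\ell^{2^{k}}(F(1))\cong\h_{k}$ (en utilisant $N^{2^{k}}=0$), la reconnaissance de $J(2^{k-1})$ comme enveloppe injective de $\h_{k}$ via son socle $\langle u^{2^{k-1}}\rangle$, et surtout le point réellement non trivial — l'essentialité de $\im{\partial_{n}^{2^{k}+1}}$ dans $N^{2^{k}+2}$ — sont traités proprement. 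Votre réduction de ce dernier point à la relation $\omega^{2^{k}+1}\partial_{r}^{2^{k}}=\partial_{n}^{2^{k}+1}\omega^{2^{k}}$ issue de $\partial^{2^{k}+1}\partial^{2^{k}}=0$, combinée à l'annulation de $\ell^{2^{k}+1}(F(1))$ en degré cohomologique impair, est exactement ce qu'il faut pour neutraliser la contribution éventuelle de $\omega^{2^{k}+1}$, et c'est bien là que se cache la difficulté que l'énoncé passe sous silence.
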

On rappelle que la somme directe $\bigoplus_{n\geq 0}J(n)$ est isomorphe à l'algèbre:
$$\f2[x_i|i\geq 0, x_{i}\in J(2^{i})^{1}]$$
Un morphisme de modules de Brown-Gitler $J(n)\to J(m)$ est déterminé par une opération de Steenrod $\theta$ de degré $n-m$. Ce morphisme est désigné par $\bullet \theta$. 
La détermination de $N^{2^{k}+2}$ se réalise dans la proposition suivante:
\begin{prop}\label{nk2}
Il y a un isomorphisme:
$$N^{2^{k}+2}\cong \bigoplus_{i=0}^{k-3}J(2^{k-1}-2^{i}).$$
Le morphisme $\partial_{n}^{2^{k}+1}$ se présente sous forme matricielle:
$$\left(\bullet Sq^{1},\bullet Sq^{2},\ldots,\bullet Sq^{2^{k-3}}\right)^{t}.$$
\end{prop}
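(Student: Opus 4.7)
The preceding lemma identifies $N^{2^k+1} \to N^{2^k+2}$ with the first two terms of the minimal injective resolution of $\h_k = F(1)/\Phi^k F(1)$. Since the socle of $\h_k$ is generated by $u^{2^{k-1}}$, its injective envelope is $J(2^{k-1})$, and one verifies via the Cartan formula that the embedding $\h_k \hookrightarrow J(2^{k-1})$ sends $u^{2^i}$ to $y_i := x_{k-1-i}^{2^i}$, consistently with $Sq^{2^i} y_i = y_{i+1}$. Consequently $N^{2^k+2}$ is the injective envelope of the nilpotent cokernel $C := J(2^{k-1})/\h_k$; since the injective envelope of a nilpotent module is determined by its socle, the problem reduces to computing $\mathrm{soc}(C)$.

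I would introduce the socle candidates $c_i := x_1^{2^i} x_0^{2^{k-1}-2^{i+1}} \in J(2^{k-1})^{2^{k-1}-2^i}$ for $0 \leq i \leq k-3$. The forward direction ($\bar c_i \in \mathrm{soc}(C)$) is a direct Steenrod-algebra computation: since $Sq^b(x_0) = 0$ for all $b \geq 1$, the Cartan formula forces $Sq^b(x_0^m) = 0$ for every $b \geq 1$ and $m \geq 1$, hence $Sq^j c_i = Sq^j(x_1^{2^i}) \cdot x_0^{2^{k-1}-2^{i+1}}$. An iterated application of Cartan yields $Sq^j(x_1^{2^i}) = \binom{2^i}{j} x_1^{2^i-j} x_0^{2j}$, which vanishes modulo $2$ for every $j \in \{1, \ldots, 2^i-1\}$ by Lucas' theorem; and for $j = 2^i$ one obtains $Sq^{2^i} c_i = x_0^{2^{k-1}} = y_{k-1} \in \h_k$. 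Unstability kills $Sq^j c_i$ for $j > 2^i$. Thus $\bar c_i$ lies in the socle.

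The main obstacle is the converse direction, namely that the $\bar c_i$ exhaust the socle. The plan is to fix an internal degree $n$ and analyze the subspace $S^n := \{c \in J(2^{k-1})^n : Sq^j c \in \h_k \text{ for all } j \geq 1\}$ by case analysis on the largest index $r$ such that $x_r$ appears in some monomial of $c$. For $r \leq 1$, the element $c$ in each internal degree has the form $\lambda\, x_1^j x_0^{2^{k-1}-2j}$, and the requirement ``$Sq^{j'} c \in \h_k$ for all $j' \geq 1$'' combined with the pure-power shape $y_s = x_{k-1-s}^{2^s}$ of the elements of $\h_k$ forces $j$ to be a power of $2$, reproducing the $c_i$ (the boundary case $j = 2^{k-2}$ gives $y_{k-2} \in \h_k$, which is discarded). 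For $r \geq 2$, applying $Sq^{2^t}$ for an appropriate $t$ to any such monomial introduces ``mixed'' terms (simultaneously involving several distinct factors $x_r, x_{r-1}, \ldots$) which cannot lie in the extremely sparse subspace $\h_k = \langle y_0, \ldots, y_{k-1}\rangle$, forcing a contradiction. This yields $\mathrm{soc}(C) = \bigoplus_{i=0}^{k-3} \f2 \cdot \bar c_i$ and hence $N^{2^k+2} \cong \bigoplus_{i=0}^{k-3} J(2^{k-1}-2^i)$.

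For the differential, the $i$-th component of $\partial_n^{2^k+1} \colon J(2^{k-1}) \to J(2^{k-1}-2^i)$ is the unique non-zero map sending the socle class $\bar c_i$ to the socle generator $x_0^{2^{k-1}-2^i}$ of $J(2^{k-1}-2^i)$. Under the Brown-Gitler universal property $\ho{\u}{J(2^{k-1})}{J(2^{k-1}-2^i)} \cong (J(2^{k-1})^{2^{k-1}-2^i})^*$, this map corresponds to the linear functional $c \mapsto (\textrm{coefficient of } x_0^{2^{k-1}} \textrm{ in } Sq^{2^i}(c))$, which is precisely the defining functional of $\bullet Sq^{2^i}$ obtained by transposing the action of $Sq^{2^i}$ along the natural isomorphism $\ho{\u}{-}{J(n)} \cong ((-)^n)^*$. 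This identifies $\partial_n^{2^k+1}$ with $(\bullet Sq^1, \bullet Sq^2, \ldots, \bullet Sq^{2^{k-3}})^t$.
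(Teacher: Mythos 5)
Your overall strategy --- reduce to the injective envelope of $C:=J(2^{k-1})/\h_{k}$ and compute $\mathrm{soc}(C)$ in Miller's polynomial model --- is legitimate but genuinely different from the paper's. The paper never computes a socle: it uses the Wall basis of $\a2$ together with the cocyclicity of $J(2^{k-1})$ to show directly that the kernel of $\left(\bullet Sq^{1},\ldots,\bullet Sq^{2^{k-3}}\right)^{t}$ is exactly the copy of $\h_{k}$ (the elements detected only by the monomials $Sq^{2^{k-2}}Sq^{2^{k-3}}\cdots Sq^{2^{k-i}}$), so that $C$ injects into $\bigoplus_{i=0}^{k-3}J(2^{k-1}-2^{i})$, essentiality being immediate. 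Your forward direction (each $\bar c_{i}$ lies in $\mathrm{soc}(C)$) is complete and correct, as is the reduction of the first assertion to the socle computation.

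The converse --- that the $\bar c_{i}$ exhaust the socle --- is where the proof actually lives, and your plan for it has a genuine gap: a socle element of $C$ in a fixed degree is an arbitrary linear combination of monomials, while your argument for $r\geq 2$ is applied ``to any such monomial'' individually. The mixed terms produced by $Sq^{2^{t}}$ on different monomials can cancel, so the fact that no single monomial survives does not show that no combination does. This is not hypothetical: already for $k=4$, the element $x_{2}x_{0}^{4}+x_{1}^{3}x_{0}^{2}\in J(8)^{5}$ has $Sq^{1}$ of each summand equal to $x_{1}^{2}x_{0}^{4}\notin\h_{4}$, so the two contributions cancel and one must pass to $Sq^{2}$ (which gives $x_{1}x_{0}^{6}$) to exclude it from the socle. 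Your $r\leq 1$ case is safe only because there is a single monomial per degree there; for $r\geq 2$ you need either an induction organizing these cancellations (e.g.\ filtering by the exponent of $x_{r}$) or the paper's dual Wall-basis argument, which avoids the issue entirely. A smaller inaccuracy: the $i$-th component of the differential is not ``the unique non-zero map sending $\bar c_{i}$ to the socle generator'' --- $\ho{\u}{J(2^{k-1})}{J(2^{k-1}-2^{i})}$ is much larger than $\f2$ --- but this is harmless, since the minimal resolution determines $\partial_{n}^{2^{k}+1}$ only up to automorphisms of the terms and $\left(\bullet Sq^{2^{i}}\right)_{i}$ is indeed a valid choice once the socle computation is in place.
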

\begin{proof}
On rappelle que la base de Wall de l'algèbre de Steenrod comporte que les produits de $Sq^{2^{n}}$.
Puisque $J(2^{k-1})$ est cocyclique, alors pour un élément $x\in J(2^{k-1})$ il existe un monôme
$Sq^{2^{n_{1}}}Sq^{2^{n_{2}}}\ldots Sq^{2^{n_{k}}}$ tel que
$$Sq^{2^{n_{1}}}Sq^{2^{n_{2}}}\ldots Sq^{2^{n_{k}}} x=x_{0}^{2^{k-1}}.$$
Par l'instabilité, $n_{j}\geq k-2$ et les seuls monômes contenant $Sq^{2^{k-2}}$ sont
$$Sq^{2^{k-2}}Sq^{2^{k-3}}\ldots Sq^{2^{k-i}}, 2\leq i\leq k.$$
Les éléments de $J(2^{k-1})$ correspondant à ces opérations forment un sous module isomorphe à $\h_{k}$.
Alors le composé
$$\h_{k}\to J(2^{k-1})\xrightarrow{\left(\bullet Sq^{1},\bullet Sq^{2},\ldots,\bullet Sq^{2^{k-3}}\right)^{t}} \bigoplus_{i=0}^{k-3}J(2^{k-1}-2^{i})$$
est trivial. Puisque les $J(m)$ sont co-libres, le quotient $J(2^{k-1})/\h_{k}$ s'injecte dans $\bigoplus_{i=0}^{k-3}J(2^{k-1}-2^{i})$. Le lemme en découle.
\end{proof}
\subsection{Les $N^{2^k+3}, k\geq 2$}
Le module $N^{2^k+3}$ est l'enveloppe injective de $\mathscr{M}^{2^k+1}$ qui s'insère dans la suite exacte courte: 
$$\xymatrixrowsep{0.6pc}\xymatrix{
0\ar[r]&\cke{\partial_{n}^{2^k+1}}\ar[r]&\mathscr{M}^{2^k+1}\ar[r]&\ell^{2^k+2}(F(1))\ar[r]&0.
}$$
\begin{prop}
Soit $k\geq 2$. L'extension $$\mathscr{M}^{2^k+1}\in\ex{\u}{1}{\h_{1}}{\cke{\partial_{n}^{2^k+1}}}$$
 est non-triviale et l'enveloppe injective de $\cke{\partial_{n}^{2^k+1}}$ est celle de $\mathscr{M}^{2^k+1}$.
\end{prop}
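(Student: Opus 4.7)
The plan is to reduce both claims to Theorem~\ref{exnontriv1} together with the simplicity of $\h_1 \cong \Sigma\f2$ in $\u$; no further computation is needed.

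First, for non-triviality, I would apply Theorem~\ref{exnontriv1} with its index $2^{k-1}$, writing $2^k + 1 = 2 \cdot 2^{k-1} + 1$. The theorem says $\mathscr{M}^{2^k+1}$ is trivial exactly when $2^{k-1} = 2^a - 2^b - 1$ for some $a > b \geq 1$ (the range of $b$ implicit in Lemma~\ref{rs4} and in the recursive argument of Theorem~\ref{exnontriv1}). Such a relation would force $2^a - 2^b = 2^{k-1} + 1$; but $2^a - 2^b = 2^b(2^{a-b}-1)$ is odd only for $b = 0$, while for $k \geq 2$ the value $2^{k-1} + 1$ is odd. Thus no admissible representation exists and $\mathscr{M}^{2^k+1}$ is non-trivial.

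Second, for the envelope identification, I would use the general principle that a non-split extension of a simple unstable module $S$ by a module $A$ embeds into $E(A)$. Concretely, set $A := \cke{\partial_n^{2^k+1}}$ and $E := E(A)$, with essential inclusion $A \hookrightarrow E$; by injectivity of $E$, the inclusion extends to a morphism $\varphi \colon \mathscr{M}^{2^k+1} \to E$. The kernel $\ke{\varphi}$ meets $A$ trivially, hence injects into the quotient $\mathscr{M}^{2^k+1}/A \cong \h_1 \cong \Sigma\f2$. Since $\Sigma\f2$ is simple in $\u$, either $\ke{\varphi} = 0$ or $\ke{\varphi}$ maps isomorphically onto $\Sigma\f2$ and furnishes a splitting of the extension; the latter is excluded by the first step. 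So $\varphi$ is injective, and the essentiality of $A$ in $E$ promotes to essentiality of $\mathscr{M}^{2^k+1}$ in $E$, whence $E(\mathscr{M}^{2^k+1}) = E = E(\cke{\partial_n^{2^k+1}})$.

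The real difficulty sits upstream, inside Theorem~\ref{exnontriv1}, whose $Sq^1$-non-triviality recursion is the delicate ingredient. Once that theorem is granted, the current proposition reduces to the elementary parity check above and to the formal injective-envelope argument.
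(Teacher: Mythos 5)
Your overall strategy is the paper's: quote Théorème~\ref{exnontriv1} for the non-triviality, then deduce the envelope identification formally. Your second step is correct and is essentially an expanded version of the paper's one-line argument: the paper sandwiches the envelope of $\mathscr{M}^{2^k+1}$ between $E\bigl(\cke{\partial_n^{2^k+1}}\bigr)$ and $E\bigl(\cke{\partial_n^{2^k+1}}\bigr)\oplus \h_1$, using that $\h_1\cong J(1)$ is injective, and excludes the larger option by non-triviality, whereas you extend the inclusion of $\cke{\partial_n^{2^k+1}}$ into its envelope and use the simplicity of $\Sigma\f2$. Both routes are valid and of comparable length.

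The gap is in the first step. You restrict the triviality criterion of Théorème~\ref{exnontriv1} to exponents $2^a-2^b-1$ with $b\geq 1$ and call this restriction \emph{implicit}; it is neither implicit nor true. The theorem's list genuinely contains $m=0$: for $j=2^n-2^0-1=2^n-2$ one has $N^{2j+2}=N^{2^{n+1}-2}=0$ by the lemme~\ref{rs4}, so $\cke{\partial_n^{2j+1}}=0$ and $\mathscr{M}^{2j+1}$ is a split extension. Your parity argument only rules out $b\geq 1$; the case $b=0$ must be treated separately, and there $2^a-1=2^{k-1}+1$, i.e.\ $2^a=2^{k-1}+2$, has the solution $a=k=2$. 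So for $k=2$ the theorem actually declares $\mathscr{M}^{5}$ trivial, and indeed $N^{6}=0$ (the sum in la proposition~\ref{nk2} is empty), hence $\cke{\partial_n^{5}}=0$, $\mathscr{M}^{5}\cong\Sigma\f2$, the extension splits, and the two injective envelopes are $0$ and $J(1)$: the statement degenerates at $k=2$ and its effective range is $k\geq 3$. For $k\geq 3$ your argument becomes complete once you add the one line disposing of $b=0$, namely that $2^{k-1}+2=2\left(2^{k-2}+1\right)$ is not a power of $2$.
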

\begin{proof}
La non-trivialité de $\mathscr{M}^{2^k+1}$ se résulte du théorème  \ref{exnontriv1}. On note $I_0$ l'enveloppe injective de $\cke{\partial_{n}^{2^k+1}}$ et $I_1$ celle de $\mathscr{M}^{2^k+1}$. On constate que $I_0\subset I_1$. La suite exacte courte $$0\to \cke{\partial_{n}^{2^k+1}}\to\mathscr{M}^{2^k+1}\to \h_{1}\to 0$$
montre que $I_1\subset I_0\oplus \h_{1}$. Puisque l'extension est non-triviale, cette inclusion n'est pas stricte. On en déduit que $I_0\cong I_1$.
\end{proof}
 \begin{coro}\label{cor321}
La suite 
	$$N^{2^n-2^k+1}\xrightarrow{\partial_{n}^{2^n-2^k+1}} N^{2^n-2^k+2}\xrightarrow{\partial_{n}^{2^n-2^k+2}} N^{2^n-2^k+3}$$
fournit les trois premiers termes de la résolution injective minimale de $\h_{k}$.
\end{coro}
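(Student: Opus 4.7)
L'idée est d'appliquer le théorème de périodicité \ref{princh42} pour se ramener au segment des trois termes consécutifs $N^{2^{k}+1},N^{2^{k}+2},N^{2^{k}+3}$, puis de combiner le lemme qui précède la Proposition~\ref{nk2} (qui traite des deux premiers termes) avec la proposition immédiatement antérieure au présent corollaire (qui traite du troisième).

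Dans un premier temps, j'invoque le Théorème~\ref{princh42} avec les paramètres $n>k\geq 2$ du corollaire : le morphisme de complexes $\beta^{\bullet}$ construit dans sa preuve se restreint au sous-complexe nilpotent et fournit des isomorphismes $N^{2^{n}-2^{k}+t}\cong N^{2^{k}+t}$ pour $0\leq t\leq 2^{k}-1$, compatibles avec les différentielles restreintes $\partial_{n}^{\bullet}$ (la compatibilité résulte du fait que $\beta^{\bullet}$ est un morphisme de complexes et de la forme triangulaire des différentielles $\partial^{\bullet}$ explicitée par la matrice \eqref{omegal}). En particulier pour $t\in\{1,2,3\}$, la suite de l'énoncé s'identifie, comme complexe de modules instables injectifs, au segment
\[
N^{2^{k}+1}\xrightarrow{\partial_{n}^{2^{k}+1}}N^{2^{k}+2}\xrightarrow{\partial_{n}^{2^{k}+2}}N^{2^{k}+3}.
\]

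Dans un second temps, je cite le lemme qui précède la Proposition~\ref{nk2}, lequel affirme que $N^{2^{k}+1}\to N^{2^{k}+2}$ donne déjà les deux premiers termes de la résolution injective minimale de $\h_{k}$. Pour identifier $N^{2^{k}+3}$ au troisième terme, je combine trois ingrédients : d'après la construction rappelée au début de la Section~3, $N^{2^{k}+3}$ est l'enveloppe injective de $\mathscr{M}^{2^{k}+1}$ ; d'après le Théorème~\ref{exnontriv1}, l'extension $\mathscr{M}^{2^{k}+1}$ n'est pas triviale ; et d'après la proposition immédiatement antérieure au corollaire, l'enveloppe injective de $\mathscr{M}^{2^{k}+1}$ coïncide avec celle de $\cke{\partial_{n}^{2^{k}+1}}$. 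Donc $N^{2^{k}+3}$ est précisément l'enveloppe injective de $\cke{\partial_{n}^{2^{k}+1}}$, c'est-à-dire le troisième terme de la résolution injective minimale de $\h_{k}$.

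Le seul point délicat attendu est de vérifier que le plongement $\cke{\partial_{n}^{2^{k}+1}}\hookrightarrow N^{2^{k}+3}$ induit par $\partial_{n}^{2^{k}+2}$ est essentiel, de sorte que par unicité des enveloppes injectives minimales cette différentielle coïncide avec la différentielle canonique de la résolution minimale de $\h_{k}$. Cela découle de la minimalité de la résolution $(I^{\bullet},\partial^{\bullet})$ de $F(1)$, qui force le composé $\cke{\partial_{n}^{2^{k}+1}}\hookrightarrow\mathscr{M}^{2^{k}+1}\hookrightarrow N^{2^{k}+3}$ à être essentiel.
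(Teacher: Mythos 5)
Votre reconstruction est correcte et suit exactement la dérivation implicite du texte : le théorème de périodicité \ref{princh42} ramène la suite aux indices $2^{k}+1$, $2^{k}+2$, $2^{k}+3$, le lemme précédant la proposition \ref{nk2} fournit les deux premiers termes, et la proposition qui précède immédiatement le corollaire identifie $N^{2^{k}+3}$ à l'enveloppe injective de $\cke{\partial_{n}^{2^{k}+1}}$, c'est-à-dire au troisième terme de la résolution injective minimale de $\h_{k}$. Notez seulement que, comme l'énoncé lui-même, votre argument requiert implicitement $k\geq 3$ : pour $k=2$ le module $\h_{2}\cong J(2)$ est injectif alors que $N^{2^{n}-1}\cong J(1)\neq 0$, et l'extension $\mathscr{M}^{2^{2}+1}=\mathscr{M}^{5}$ est en fait triviale d'après le théorème \ref{exnontriv1}, ce qui contredit la proposition invoquée dans ce cas limite.
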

\subsection{Les $N^{2^k+4}, k\geq 3$}
On rappelle que $N^{2^k+4}$ est l'enveloppe injective de $\ke{_{2^{k}+2}}$ qui se calcule de la suite exacte courte:
$$0\to J(1)\to \cke{\partial}_{n}^{2^{k}+2}\to\ke{_{2^{k}+2}}\to 0.$$
Puisque $J(1)$ est injectif, $N^{2^k+4}\oplus J(1)$ est l'enveloppe injective de $\cke{\partial}_{n}^{2^{k}+2}$.
Alors:
\begin{prop}
Il existe un morphisme $N^{2^k+3}\to N^{2^k+4}\oplus J(1)$ tel que la suite suivante 
$$0\to \h_{k}\to N^{2^k+1}\xrightarrow{\partial_{n}^{2^k+1}} N^{2^k+2}\xrightarrow{\partial_{n}^{2^k+2}} N^{2^k+3}\to N^{2^k+4}\oplus J(1)$$
fournit les quatre premiers termes de la résolution injective minimale de $\h_{k}$.	
\end{prop}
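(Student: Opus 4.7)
Le plan consiste à étendre d'une étape la résolution injective minimale partielle de $\h_k$ fournie par le Corollaire \ref{cor321}, en invoquant le calcul préalable de l'enveloppe injective du conoyau $\cke{\partial_n^{2^k+2}}$ déjà effectué dans le paragraphe précédant l'énoncé.

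Premièrement, je rappelle le principe général : dans toute résolution injective minimale $0\to M\to J^0\to J^1\to\cdots$, le terme $J^{j+1}$ pour $j\geq 1$ s'identifie à l'enveloppe injective du conoyau du morphisme $J^{j-1}\to J^j$. Le Corollaire \ref{cor321} garantit déjà que
$$0\to\h_k\to N^{2^k+1}\xrightarrow{\partial_n^{2^k+1}}N^{2^k+2}\xrightarrow{\partial_n^{2^k+2}}N^{2^k+3}$$
fournit les trois premiers termes de la résolution injective minimale de $\h_k$. Par le principe général, le quatrième terme est donc l'enveloppe injective de $\cke{\partial_n^{2^k+2}}$, et le morphisme de $N^{2^k+3}$ vers ce quatrième terme s'obtient en composant la projection canonique $N^{2^k+3}\twoheadrightarrow\cke{\partial_n^{2^k+2}}$ avec l'inclusion essentielle du conoyau dans son enveloppe injective.

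Deuxièmement, j'invoque l'identification déjà établie dans le paragraphe précédant l'énoncé : l'enveloppe injective de $\cke{\partial_n^{2^k+2}}$ est $N^{2^k+4}\oplus J(1)$. Cette identification résulte de la suite exacte courte qui décompose $\cke{\partial_n^{2^k+2}}$, combinée avec l'observation que $J(1)$, étant lui-même injectif, se scinde nécessairement comme facteur direct dans toute enveloppe injective le contenant. Il suit que la suite annoncée, complétée par la composition décrite ci-dessus, constitue bien les quatre premiers termes de la résolution injective minimale de $\h_k$.

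L'obstacle principal de cette démonstration a déjà été franchi en amont : il résidait dans le calcul explicite de $\cke{\partial_n^{2^k+2}}$ et la mise en évidence du sommand $J(1)$ supplémentaire qui apparaît au quatrième terme mais non aux précédents. Le présent argument n'est qu'une mise en forme combinant le Corollaire \ref{cor321} avec cette analyse préalable du conoyau, par le biais du principe général rappelé ci-dessus sur les résolutions injectives minimales.
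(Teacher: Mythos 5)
Votre démonstration est correcte et suit essentiellement le même chemin que l'article : celui-ci ne donne d'ailleurs aucune preuve explicite et déduit la proposition directement du corollaire \ref{cor321} et de l'identification, établie juste avant l'énoncé, de $N^{2^k+4}\oplus J(1)$ comme enveloppe injective de $\cke{\partial_{n}^{2^{k}+2}}$ (la suite exacte courte $0\to J(1)\to \cke{\partial_{n}^{2^{k}+2}}\to \mathscr{M}^{2^k+2}\to 0$ se scindant car $J(1)$ est injectif). Votre rédaction ne fait qu'expliciter ce raisonnement \emph{via} le principe général sur les résolutions injectives minimales, ce qui est exactement l'intention de l'auteur.
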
	
\section{Torsion de Frobenius}
Dans cette section, on calcule les premiers termes de la résolution injective minimale de $\h_{k}$
afin de compléter le résultat principal sur la partie nilpotente de la résolution injective minimale
de $F(1)$. Cette information permet d'étudier l'effet de la torsion de Frobenius sur certains
groupes d'extensions de modules instables.
\subsection{Résolution injective minimale de $\h_{k}$}\label{casp}
La proposition \ref{nk2} fournit les deux premiers termes de la résolution injective minimale de $\h_{k}$:
$$0\to \h_{k}\to J\left(2^{k-1}\right)\xrightarrow{\left(\bullet Sq^{1},\bullet Sq^{2},\ldots,\bullet Sq^{2^{k-3}}\right)^{t}} \bigoplus_{i=0}^{k-3}J\left(2^{k-1}-2^{i}\right)$$ 
On désigne par $J^{k}_{2}$ le troisième terme de la résolution injective minimale de $\h_{k}$ et par
$\partial^{k}_{2}$ la deuxième différentielle. Cette sous-section est consacrée à calculer les 
$J^{k}_{2}$ et $\partial^{k}_{2}$. 

Selon \cite{Wal60}, pour $0\leq j\leq i-2$ ou $j=i$, il existe les opérations de Steenrod
$m^{t}_{i,j}$ où $1\leq t\leq i$ telles que
\begin{align*}
Sq^{2^{i}}Sq^{2^{j}}&=\sum_{t=1}^{i}Sq^{2^{i-t}}m^{t}_{i,j}.
\end{align*}
Alors: 
\begin{lemm}
On a:
\begin{align*}
J_{2}^{k}=\left(\bigoplus_{i=1}^{k-2}J\left(2^{k-1}-2^{i}\right)\right)\bigoplus
\left(\bigoplus_{\substack{1\leq i\leq k-2\\0\leq j\leq
i-2}}^{}J\left(2^{k-1}-2^{i}-2^{j}\right)\right)
\end{align*}
et $\partial^{k}_{2}$ restreint au $J(2^{k-1}-2^{i})$ vers $J(2^{k-1}-2^{n}-2^{m})$ est $\bullet m^{n-i}_{n,m}$.
\end{lemm}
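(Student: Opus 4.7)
The strategy is to construct a candidate for $\partial^{k}_{2}$ directly from Wall's decomposition of the Steenrod algebra and verify that it realizes the minimal injective envelope of $\cke(\partial^{k}_{1})$. The candidate is organized by the Wall relations themselves, which is why the formula for $J^{k}_{2}$ has this particular indexing structure.

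Concretely, each Wall relation $Sq^{2^n}Sq^{2^m} = \sum_{t=1}^{n} Sq^{2^{n-t}}m^{t}_{n,m}$ with $1\leq n\leq k-2$ and either $m=n$ or $0\leq m\leq n-2$ will dictate a direct summand of $J^{k}_{2}$. In the doubling case $m=n$ one obtains a summand $J(2^{k-1}-2^{n+1})$, which under the reindexing $i:=n+1$ matches the first direct sum $\bigoplus_{i=1}^{k-2}J(2^{k-1}-2^{i})$; in the commutation case $0\leq m\leq n-2$ one obtains $J(2^{k-1}-2^{n}-2^{m})$, giving the second sum. The restriction of $\partial^{k}_{2}$ from the $i$-th summand $J(2^{k-1}-2^{i})$ of $J^{k}_{1}$ to each such output is set to $\bullet m^{n-i}_{n,m}$, which is nonzero precisely when $0\leq i\leq n-1$.

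For the complex property $\partial^{k}_{2}\circ\partial^{k}_{1}=0$, the composition on each $(n,m)$-output summand unfolds via the composition law for Brown-Gitler morphisms into $\bullet\bigl(\sum_{i}Sq^{2^{i}}m^{n-i}_{n,m}\bigr)=\bullet(Sq^{2^n}Sq^{2^m})$ by Wall's relation, and this vanishes on the image of $\h_{k}$ in $J(2^{k-1})$ by the instability-plus-Wall-basis argument that was used in the proof of Proposition \ref{nk2}. Minimality is immediate since each $m^{t}_{n,m}$ is a Steenrod operation of strictly positive degree, so no component of $\partial^{k}_{2}$ is an isomorphism between a pair of Brown-Gitler summands. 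Exhaustiveness of the listed summands—that nothing is missing from $J^{k}_{2}$—follows from Wall's theorem that the relations $Sq^{2^{i}}Sq^{2^{j}}-\sum_{t}Sq^{2^{i-t}}m^{t}_{i,j}$ constitute a minimal set of defining relations for $\mathcal{A}_{2}$: in the degree range forced by the instability of $\iota_{2^{k-1}}$ and the truncation of $J^{k}_{1}$ to indices $i\leq k-3$, only the Wall relations with $1\leq n\leq k-2$ produce socle elements in $\cke(\partial^{k}_{1})$.

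The main technical obstacle is precisely this exhaustiveness step: verifying via the socle structure of $\cke(\partial^{k}_{1})$ that no further Brown-Gitler summand appears beyond those coming from the Wall relations in the stated range. This requires a careful dimension analysis using Miller's description $\bigoplus_{n}J(n)\cong\f2[x_{i}]$ combined with the explicit action of the maps $\bullet Sq^{2^{i}}$, in the spirit of the computation carried out in the proof of Proposition \ref{nk2}. A separate bookkeeping subtlety to handle carefully is the reindexing between Wall doubling relations $(n,n)$ and the first direct sum of the formula (indexed by $i=n+1$), as well as verifying that the low-index boundary cases (where the Wall sum is empty or reduces to a trivial relation) do not add spurious contributions.
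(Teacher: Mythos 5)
Votre stratégie part de la même idée directrice que le texte — les relations de Wall indexent les nouveaux facteurs de $J_{2}^{k}$ — mais, telle quelle, elle laisse en blanc précisément les deux points que la démonstration du texte établit. D'abord, l'exhaustivité : vous la reconnaissez comme « the main technical obstacle » et vous la renvoyez à une « careful dimension analysis » non effectuée ; l'appel à la minimalité de l'ensemble des relations de Wall comme relations définissantes de $\mathcal{A}_{2}$ n'est pas un argument, car le lien entre cette minimalité (qui concerne une présentation de l'algèbre) et le troisième terme de la résolution injective minimale de $\h_{k}$ dans $\u$ est exactement ce qu'il faut démontrer. Le texte le fait de façon élémentaire : comme la base de Wall ne comporte que des produits de $Sq^{2^{l}}$, tout élément non nul de $\cke{\partial_{n}^{2^{k}+1}}$ est envoyé, par une opération de Steenrod convenable, dans le sous-module $M$ engendré par les classes explicites $\overline{x_{0}^{2^{k-1}-2^{n+1}-2^{m}}x_{1}^{2^{n}}}$ et $\overline{x_{0}^{2^{k-1}-2^{n+1}-2^{n+2}}x_{1}^{2^{n}}x_{2}^{2^{n}}}$ ; $M$ est donc essentiel et son enveloppe injective est $J_{2}^{k}$. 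Ensuite, la non-trivialité de chaque facteur annoncé : votre critère de « minimalité » (aucune composante de $\partial_{2}^{k}$ n'est un isomorphisme) ne montre pas que chaque relation de Wall produit effectivement une classe cocyclique non nulle dans le conoyau ; le texte le démontre par l'absurde en utilisant $Sq^{2^{n}}Sq^{2^{m}}+Sq^{2^{m}}Sq^{2^{n}}\in\mathcal{A}(n-1)$, ce qui interdit l'existence d'un $z\in J(2^{k-1})$ dont l'image par $\partial_{n}^{2^{k}+1}$ serait la classe en question.

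Un troisième point est mal dirigé : pour la propriété de complexe, vous vérifiez que $\partial_{2}^{k}\circ\partial_{1}^{k}$ s'annule « sur l'image de $\h_{k}$ dans $J(2^{k-1})$ ». Or cette image est le noyau de $\partial_{1}^{k}$, donc l'annulation y est automatique et ne prouve rien ; ce qu'il faut, c'est que le composé s'annule sur tout $J(2^{k-1})$ (autrement dit que $\sum_{i}Sq^{2^{i}}m_{n,m}^{n-i}$ agisse trivialement sur $\iota_{2^{k-1}-2^{n}-2^{m}}$, ce qui demande un contrôle d'excès et n'est pas une conséquence formelle de la relation de Wall), puis que $\partial_{2}^{k}$ soit injectif sur $\cke{\partial_{n}^{2^{k}+1}}$ pour avoir l'exactitude. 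La voie du texte contourne entièrement cette difficulté en définissant $\partial_{2}^{k}$ comme le composé $J_{1}^{k}\twoheadrightarrow\cke{\partial_{n}^{2^{k}+1}}\hookrightarrow J_{2}^{k}$, de sorte que la propriété de complexe est acquise par construction, et en identifiant ensuite l'enveloppe injective par le calcul explicite des éléments cogénérateurs. En l'état, votre proposition décrit un plan plausible mais ne constitue pas une démonstration.
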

\begin{proof}
Soient deux entiers $n,m$ tels que $0\leq m\leq n-2\leq k-5$. On désigne par $x$ l'élément 
$$x_{0}^{2^{k-1}-2^{n}-2^{m}}\in J(2^{k-1}).$$
Alors
\begin{align*}
Sq^{2^{n}}Sq^{2^{m}}x&=Sq^{2^{m}}Sq^{2^{n}}x\\
&=x_{0}^{2^{k-1}}.
\end{align*}
Il suit que 
$$\left(\bullet Sq^{1},\bullet Sq^{2},\ldots,\bullet Sq^{2^{k-3}}\right)^{t}(x)=x_{0}^{2^{k-1}-2^{n+1}-2^{m}}x_{1}^{2^{n}}+x_{0}^{2^{k-1}-2^{n+1}-2^{m}}x_{1}^{2^{n}}.$$
Si $y\in \bigoplus_{i=0}^{k-3}J\left(2^{k-1}-2^{i}\right)$ on désigne par $\bar{y}$ son image dans $$\cke{\left(\bullet Sq^{1},\bullet Sq^{2},\ldots,\bullet Sq^{2^{k-3}}\right)^{t}}.$$ Alors 
$$\overline{x_{0}^{2^{k-1}-2^{n+1}-2^{m}}x_{1}^{2^{n}}}=\overline{x_{0}^{2^{k-1}-2^{n+1}-2^{m}}x_{1}^{2^{n}}}.$$
On montre que $\overline{x_{0}^{2^{k-1}-2^{n+1}-2^{m}}x_{1}^{2^{n}}}$ est non-trivial. Supposons
par l'absurde qu'il est trivial. Alors il existe $z\in J(2^{k-1})$ tel que 
\begin{equation}\label{eqwal}
\left(\bullet Sq^{1},\bullet Sq^{2},\ldots,\bullet Sq^{2^{k-3}}\right)^{t}(z)=x_{0}^{2^{k-1}-2^{n+1}-2^{m}}x_{1}^{2^{n}}
\end{equation}
Il suit que
$$\left(Sq^{2^{n}}Sq^{2^{m}}+Sq^{2^{m}}Sq^{2^{n}}\right)(z)=x_{0}^{2^{k-1}}.$$
D'après \cite{Wal60}
$$Sq^{2^{n}}Sq^{2^{m}}+Sq^{2^{m}}Sq^{2^{n}}\in \mathcal{A}(n-1)$$
ce qui contredit l'égalité (\ref{eqwal}). De même manière,
$$\overline{x_{0}^{2^{k-1}-2^{n+1}-2^{n+2}}x_{1}^{2^{n}}x_{2}^{2^{n}}}, n\leq k-3$$ ne sont pas
triviaux. Puisque la base de Wall de l'algèbre de Steenrod ne comporte que les produits de
$Sq^{2^{l}}$, alors pour tout élément $v$ de 
$$\cke{\left(\bullet Sq^{1},\bullet Sq^{2},\ldots,\bullet Sq^{2^{k-3}}\right)^{t}}$$
il existe une opération de Steenrod $\theta$ telle que $\theta v$ appartient au sous module $M$ engendré par 
$$\left\{\overline{x_{0}^{2^{k-1}-2^{n+1}-2^{m}}x_{1}^{2^{n}}},\overline{x_{0}^{2^{k-1}-2^{n+1}-2^{n+2}}x_{1}^{2^{n}}x_{2}^{2^{n}}},0\leq m\leq n-2\leq k-5\right\}.$$
Alors $J^{k}_{2}$ est l'enveloppe injective de $M$ et on conclut le lemme.
\end{proof}
\subsubsection{Les cas particuliers}
Le lemme suivant détaille les $N^i$ pour $i\leq 24$.
\begin{prop}
	On a 
	\begin{longtable}{|c||*{11}{c|}}
	\hline
	\rowcolor{Gray}\makebox[0.3em]{$k$}&\makebox[3em]{$1$}&\makebox[3em]{$2$}&\makebox[3em]{
$3$}&\makebox[3em]{$4$}\\
	\hline
	$N^{k}$& $0$ &$0$ &$J(1)$ &$0$\\
	\hline
	\rowcolor{Gray}\makebox[0.3em]{$k$}&\makebox[3em]{$5$}&\makebox[3em]{$6$}&\makebox[3em]{
$7$}&\makebox[3em]{$8$}\\
	\hline
	$N^{k}$& $J(2)$ &$0$ &$J(1)$ &$0$\\
	\hline
	\rowcolor{Gray}\makebox[0.3em]{$k$}&\makebox[3em]{$9$}&\makebox[3em]{$10$}&\makebox[3em]{
$11$}&\makebox[3em]{$12$}\\
	\hline
	$N^{k}$& $J(4)$ &$J(3)$ &$J(2)$ &$0$\\
	\hline
	\rowcolor{Gray}\makebox[0.3em]{$k$}&\makebox[3em]{$13$}&\makebox[3em]{$14$}&\makebox[3em]{
$15$}&\makebox[3em]{$16$}\\
	\hline
	$N^{k}$& $J(2)$ &$0$ &$J(1)$ &$0$\\
	\hline
	\rowcolor{Gray}\makebox[0.3em]{$k$}&\makebox[3em]{$17$}&\makebox[3em]{$18$}&\makebox[3em]{$19$}&\makebox[3em]{$20$}\\
	\hline
	$N^{k}$& $J(8)$ &$J(7)\oplus J(6)$ &$J(6)\oplus J(4)$ &$J(5)$\\\hline
	\rowcolor{Gray}$\partial_{n}^{k}$&$\left(\bullet Sq^{1},\bullet Sq^{2}\right)^{t}$ &$\left(\begin{smallmatrix}
	\bullet Sq^{1}& 0\\ \bullet Sq^{2}Sq^{1}& \bullet Sq^{2}
	\end{smallmatrix}\right)$ &$\left(\bullet Sq^{1}, 0\right)$ & $\bullet Sq^{1}$\\\hline
    \makebox[0.3em]{$k$}& \makebox[3em]{$21$}& \makebox[3em]{$22$}& \makebox[3em]{$23$}& \makebox[3em]{$24$}\\
    	\hline
    \rowcolor{Gray}$N^{k}$& $J(4)$& $J(3)$& $J(2)$& $0$\\ \hline
    $\partial_{n}^{k}$&$\bullet Sq^{1}$ &$\bullet Sq^{1}$ &$0$ & $0$\\\hline	 
	\end{longtable}
\end{prop}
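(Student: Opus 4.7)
The plan is to verify each entry of the table by combining the results developed in the preceding sections. The computation splits naturally into three regimes.

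\textbf{First regime} ($k \leq 18$): every entry follows directly from Lemma \ref{rs4} applied with $n = 2, 3, 4$, together with Proposition \ref{nk2}. For instance, I would invoke Lemma \ref{rs4} with $n=4$, $k=2$ to get $N^{11}=J(2)$, $N^{12}=0$, $N^{13}=J(2)$, and continue with $N^{14}=0$, $N^{15}=J(1)$, $N^{16}=0$, $N^{17}=J(2^{3})=J(8)$. Proposition \ref{nk2} with $k=3,4$ gives $N^{10}=J(3)$ and $N^{18}=J(7)\oplus J(6)$, and also the differential $\partial_{n}^{17}=\left(\bullet Sq^{1},\bullet Sq^{2}\right)^{t}$ explicitly.

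\textbf{Second regime} ($k \in [19,24]$): the plan is to exploit Corollary \ref{cor321} with $n=5$ and $k=4$, which identifies $N^{17}\to N^{18}\to N^{19}$ with the first three terms of the minimal injective resolution of $\h_{4}=F(1)/\Phi^{4}F(1)$. I would extend this identification further: since the cohomology of $(N^{\bullet},\partial_{n}^{\bullet})$ is controlled by Corollary \ref{calcul1}, and only $\h_{1}=\Sigma\f2$ contributes non-trivially in the range considered, each $N^{17+t}$ for $t\geq 1$ is forced to be the injective envelope of $\cke{\partial_{n}^{17+t-1}}$, so the whole segment coincides with the initial terms of the minimal injective resolution of $\h_{4}$. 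Computing this resolution step by step inside the polynomial algebra $\bigoplus_{n}J(n)\cong\f2[x_{0},x_{1},\ldots]$ yields $N^{19}=J(6)\oplus J(4)$, then $N^{20}=J(5)$, $N^{21}=J(4)$, $N^{22}=J(3)$, $N^{23}=J(2)$; finally $N^{24}=0$ matches the value obtained from Lemma \ref{rs4} with $n=5$, $k=3$.

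\textbf{The main obstacle} will be establishing the precise form of the differentials, particularly the off-diagonal entry $\bullet Sq^{2}Sq^{1}$ in $\partial_{n}^{18}$. This requires careful tracking of Steenrod actions on specific monomials in $\f2[x_{0},x_{1},\ldots]$ using Cartan's formula, the instability condition, and the relations of the Wall basis. The identity $\partial_{n}^{k+1}\circ\partial_{n}^{k}=0$ at each step strongly constrains the matrix entries via Adem-type relations (e.g.\ $Sq^{1}Sq^{1}=0$, and $Sq^{2}Sq^{1}\circ Sq^{1}=Sq^{2}\circ Sq^{2}$), and the fact that each pair of successive Brown-Gitler summands $J(m),J(m')$ in the later part of the table satisfies $\dim\ho{\u}{J(m)}{J(m')}\leq 1$ forces the displayed operation to be the unique nonzero one of the appropriate total degree. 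Once these uniqueness and compatibility checks are made, the table entries are determined, and the proposition follows.
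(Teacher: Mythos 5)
Your first regime is sound and is exactly what the paper does for $k<20$: the lemme \ref{rs4} together with the résolution of $\h_{k}$ from the sous-section \ref{casp} (proposition \ref{nk2}) gives $N^{k}$ and $\partial_{n}^{k}$ up to $k=19$. The gap is in your second regime. The segment $N^{17}\to\cdots\to N^{24}$ is \emph{not} an initial segment of the résolution injective minimale of $\h_{4}$, and $N^{j+2}$ is \emph{not} the enveloppe injective of $\cke{\partial_{n}^{j}}$: by the proposition of the section 3, $N^{j+2}$ is the enveloppe injective of $\mathscr{M}^{j}$, which sits in the four-term exact sequence
$$0\to\ell^{j}\left(F(1)\right)\to\cke{\partial_{n}^{j}}\to\mathscr{M}^{j}\to\ell^{j+1}\left(F(1)\right)\to 0,$$
and the groups $\ell^{j}(F(1))$ do not vanish in your range. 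Your assertion that ``only $\h_{1}=\Sigma\f2$ contributes'' is false: by the corollaire \ref{calcul1}, $\ell^{18}(F(1))\cong\ell^{22}(F(1))\cong\h_{1}$ but $\ell^{20}(F(1))\cong\h_{2}$ and $\ell^{24}(F(1))\cong\h_{3}$. Equivalently, the complex $\left(N^{\bullet},\partial_{n}^{\bullet}\right)$ has non-trivial cohomology (namely $\ell^{\ast}(F(1))$ up to a shift) at several spots between $17$ and $25$, so it cannot coincide there with an exact injective resolution of $\h_{4}$.

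This is not a cosmetic issue: your recipe produces wrong entries. For $N^{20}$, the cokernel $\cke{\partial_{n}^{18}}$ contains $\ell^{18}(F(1))\cong\Sigma\f2$ with quotient $\mathscr{M}^{18}\cong\Sigma^{5}\f2$ (here $\ell^{19}(F(1))=0$); since instability forces any extension of $\Sigma^{5}\f2$ by $\Sigma\f2$ to split, the enveloppe injective of $\cke{\partial_{n}^{18}}$ is $J(1)\oplus J(5)$, whereas the correct answer $N^{20}=J(5)$ requires first killing the copy of $\ell^{18}(F(1))$, which is absorbed by the reduced part of the resolution. For $N^{21}$, one has $\cke{\partial_{n}^{19}}\cong\Sigma^{4}\f2$ and $\mathscr{M}^{19}$ is an extension of $\ell^{20}(F(1))\cong\h_{2}$ by $\Sigma^{4}\f2$; whether $N^{21}$ equals $J(4)$ or $J(4)\oplus J(2)$ is decided by the non-triviality of this extension, which is exactly the content of the théorème \ref{exnontriv1} (here $19=2\cdot 9+1$ and $9$ is not of the form $2^{n}-2^{m}-1$), giving $\mathscr{M}^{19}\cong\h_{3}$ and $N^{21}=J(4)$. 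Your argument never invokes the théorème \ref{exnontriv1} nor the exact sequence above, and without these two inputs the computation of $N^{20},\ldots,N^{23}$ does not go through. (There is also a minor indexing slip: in a minimal resolution the term $N^{17+t}$ would be governed by $\cke{\partial_{n}^{17+t-2}}$, not $\cke{\partial_{n}^{17+t-1}}$.)
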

\begin{proof}
Les $N^{k}$ et $\partial_{n}^{k}$ pour $k < 20 $ sont calculés grâces à la sous section
\ref{casp} et au lemme \ref{rs4}. Puisque $\mathscr{M}^{18}\cong\cke{\delta}$ où $$\delta:=
	\ell^{18}(F(1))\hookrightarrow \cke{J(7)\oplus J(6)\xrightarrow{\begin{pmatrix}\bullet Sq^{1}& 0\\\bullet \left(Sq^{2}Sq^{1}\right)&\bullet Sq^{2}\end{pmatrix}}J(6)\oplus J(4)}
	$$ donc $\mathscr{M}^{18}\cong\Sigma^5\f2$ alors $N^{20}\cong J(5)$. 
	
	Parce que, d'une part, $\cke{\partial_n^{19}}\cong\Sigma^{4}\f2$ et d'autre part  $\mathscr{M}^{19}$ est une extension non-triviale dans $\ex{\u}{1}{\ell^{20}(F(1))}{\cke{\partial_n^{19}}}$ alors  $\mathscr{M}^{19}\cong\h_{3}$. Il en découle que $N^{21}$ est isomorphe à $J(4)$.
	
	De manière similaire on obtient $N^{22}=J(3),N^{23}=J(2)$ et $N^{24}=0$.
\end{proof} 
Fixons: $$J(n_1,\ldots,n_k):=\dsum{i=1}{k}{J(n_k)}.$$
De manière analogue, on obtient
\begin{prop}
On a
\begin{center}
\fontsize{7pt}{11pt}	\selectfont
\begin{longtable}{|c||*{11}{c|}}
\hline
\rowcolor{Gray}\makebox[0.3em]{$k$}&\makebox[3em]{$33$}&\makebox[3em]{$34$}&\makebox[3em]{$35$}\\
\hline
$N^{k}$& $J(16)$ &$J(15,14,12)$ &$J(14,12,11,8)$ \\\hline
\rowcolor{Gray}$\partial_{n}^{k}$&$\left(\begin{smallmatrix}
	\bullet Sq^{1}\\\bullet Sq^{2}\\\bullet Sq^{4}
	\end{smallmatrix}\right)$ &$\left(\begin{smallmatrix}
	\bullet Sq^{1}& 0 & 0 \\ \bullet Sq^{2}Sq^{1}& \bullet Sq^{2} & 0\\ \bullet Sq^{4} &  \bullet Sq^{3} &  \bullet Sq^{1} \\\bullet Sq^{6,1} &  \bullet Sq^{4,2}+ Sq^{5,1} &  \bullet Sq^{4} \\ 
	\end{smallmatrix}\right)$ & $\left(\begin{smallmatrix}
		\bullet Sq^{1}& 0 & 0 & 0\\ \bullet Sq^{4}& \bullet Sq^{2} & \bullet Sq^{1} & 0\\ \bullet Sq^{6,3} &  \bullet Sq^{4,2,1} &  \bullet Sq^{4,2}&  \bullet Sq^{3} 
		\end{smallmatrix}\right)$\\
\hline
\makebox[0.3em]{$k$}&\multicolumn{2}{c|}{$36$}    &\makebox[3em]{$37$}\\
\hline
\rowcolor{Gray}$N^{k}$& \multicolumn{2}{c|}{$J(13,10,5)$} &$J(12,4,3)$		\\
\hline
$\partial_{n}^{k}$&\multicolumn{2}{c|}{$\left(\begin{smallmatrix}
				\bullet Sq^{1}& 0 & 0\\\bullet Sq^{6,2,1} & \bullet Sq^{5,1}& 0\\\bullet Sq^{6,2,1} & \bullet Sq^{4,2,1}& \bullet Sq^{2}		\end{smallmatrix}\right)$} &$\left(\begin{smallmatrix}
				\bullet Sq^{1}& 0 & 0 \\ \bullet Sq^{6}& 0 & 0\\ 0 &  \bullet Sq^{1} &  0 
				\end{smallmatrix}\right)$\\ \hline
	\rowcolor{Gray}\makebox[0.3em]{$k$}&\makebox[3em]{$38$}&\makebox[3em]{$39$}&\makebox[3em]{$40$}\\
			\hline
		$N^{k}$& $J(11,6,3)$ & $J(10,2)$ & $J(9)$\\ \hline 		
		\rowcolor{Gray}$\partial_{n}^{k}$&  $\left(\begin{smallmatrix}
					\bullet Sq^{1}& 0 & 0\\ 0& 0 & \bullet Sq^{1}
					\end{smallmatrix}\right)$ &$\left(	\bullet Sq^{1},0 \right)$ & $\bullet Sq^{1}$\\\hline	
	\makebox[0.3em]{$k$}&\makebox[3em]{$41$}&\makebox[3em]{$42$}&\makebox[3em]{$43$}\\ \hline	
\rowcolor{Gray}	$N^{k}$&$J(8)$ & $J(7,6)$ &   $J(6,4)$\\\hline
					$\partial_{n}^{k}$&$\left(\bullet Sq^{1},\bullet Sq^{2}\right)^{t}$ &$\left(\begin{smallmatrix}
						\bullet Sq^{1}& 0\\ \bullet Sq^{2}Sq^{1}& \bullet Sq^{2}
						\end{smallmatrix}\right)$ &$\left(\bullet Sq^{1}, 0\right)$ \\\hline
	\rowcolor{Gray}		\makebox[0.3em]{$k$}&\makebox[3em]{$44$}&\makebox[3em]{$45$}&\makebox[3em]{$46$}\\
					\hline
$N^{k}$& $J(5)$ &$J(4)$	 &$J(3)$	\\\hline				
	\rowcolor{Gray}				$\partial_{n}^{k}$&$	\bullet Sq^{1}$ &$	\bullet Sq^{1}$&$	\bullet Sq^{1}$\\\hline	
	$k$& $47$ &$48$	 &$49$	\\\hline							
\rowcolor{Gray}	$N^{k}$& $J(2)$ &$0$	 &$J(8)$	\\\hline
	$\partial_{n}^{k}$& $0$ &$0$ &$0$\\\hline				
	\end{longtable}	
	\end{center}\end{prop}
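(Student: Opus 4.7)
The plan is to iterate, across $k \in [33,49]$, the method of Section 3: at each step, given the preceding term $N^{k-1}$ and the differential $\partial_n^{k-1}$, we compute $\cke{\partial_n^{k-1}}$ in $\u$, determine the extension class $\mathscr{M}^{k-1}$ by means of Theorem~\ref{exnontriv1}, and identify $N^{k+1}$ as the injective envelope of the resulting nilpotent extension. The differential $\partial_n^{k}$ is then read off from the Steenrod operations that connect the summands $J(\cdot)$ of $N^{k}$ to those of $N^{k+1}$.

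The base of the iteration is immediate. The values $N^{33} = J(16)$ and $N^{34} = J(15,14,12)$, together with $\partial_n^{33} = (\bullet Sq^{1}, \bullet Sq^{2}, \bullet Sq^{4})^t$, follow from Lemma~\ref{rs4} and Proposition~\ref{nk2} applied to $k=5$. The term $N^{35}$ is obtained from the computation of $J_{2}^{5}$ given in Subsection~\ref{casp}, after accounting for the factors absorbed by the reduced part $R^{35}$; the entries of $\partial_n^{34}$ come from the Wall relations expressing $Sq^{2^{i}}Sq^{2^{j}} - Sq^{2^{j}}Sq^{2^{i}}$ as admissible monomials in $\mathcal{A}(i-1)$. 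At the other extremity of the interval, the values $N^{47} = J(2)$, $N^{48} = 0$ and $N^{49} = J(8)$ can be read directly from Lemma~\ref{rs4} with $(n,k) = (6,4)$, since $49 = 2^{6} - 2^{4} + 1$.

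An observation that organizes the middle of the range is that the block $N^{41}, \ldots, N^{48}$ reproduces $N^{17}, \ldots, N^{24}$ from the preceding proposition. This is because the sequence of extensions $\mathscr{M}^{k}$ arising in this range follows exactly the same pattern as in $[17,24]$, with the same successive cokernels and the same triviality/non-triviality verdicts from Theorem~\ref{exnontriv1}. This dramatically shortens the iteration: once $N^{41} = J(8)$ is established by the extension analysis succeeding $N^{40} = J(9)$, the rest of the terms up to $N^{48}$ are forced to match the earlier table.

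The main obstacle is the accurate identification of the matrix form of each differential $\partial_n^{k}$. Although Theorem~\ref{exnontriv1} controls the qualitative structure of the extensions and the material of Section~6 controls the first few $J(\cdot)$-summands, the precise Steenrod-algebra entries occurring in the table, such as $\bullet Sq^{6,1}$, $\bullet Sq^{4,2}+Sq^{5,1}$ and $\bullet Sq^{6,2,1}$, demand careful bookkeeping with the Adem relations and the admissible basis. In particular, at each step one must verify that no spurious cancellations occur between rows of the matrix, and that admissible representatives are selected so that the sequence remains exact and the resolution stays minimal; this is a purely computational but lengthy verification.
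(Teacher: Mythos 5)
Your overall plan — iterate the cokernel/extension/injective-envelope machinery of Section 3, seeded by Lemma \ref{rs4}, Proposition \ref{nk2} and the computation of $J_2^k$ in \S\ref{casp}, with Theorem \ref{exnontriv1} deciding the non-triviality of the extensions $\mathscr{M}^k$ — is exactly what the paper means by \emph{de manière analogue} (it gives no further detail for this proposition), and your treatment of the two ends of the range is correct: $N^{33}=J(16)$ and $N^{34}=J(15,14,12)$ from Lemma \ref{rs4} and Proposition \ref{nk2} with $k=5$, and $N^{47}=J(2)$, $N^{48}=0$, $N^{49}=J(8)$ from Lemma \ref{rs4} since $48=2^6-2^4$. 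Two of your intermediate steps, however, do not hold up as written.

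First, the shortcut $N^{41},\dots,N^{48}\cong N^{17},\dots,N^{24}$ is asserted, not derived. Theorem \ref{princh42} gives $N^{2^6-2^4+t}\cong N^{2^4+t}$ for $0\leq t\leq 15$, i.e.\ it identifies $N^{48+t}$ with $N^{16+t}$; it covers $N^{49}$ onwards and says nothing about the block $[41,48]$. Nor is it true that \emph{the same extensions} arise: $N^{18}$ is the injective envelope of $\mathscr{M}^{16}\cong J(8)/\h_{4}$ (since $\cke{\partial_n^{16}}=J(8)$ and $\ell^{16}(F(1))\cong\h_{4}$), whereas $N^{42}$ is the injective envelope of $\mathscr{M}^{40}\cong\cke{\partial_n^{40}}/\h_{3}$ with $\partial_n^{40}=\bullet Sq^{1}:J(9)\to J(8)$ — a quotient of $J(8)$ by a different submodule. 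That the answers coincide must be verified term by term; saying the later block ``is forced to match'' assumes the conclusion. Second, the phrase ``after accounting for the factors absorbed by the reduced part $R^{35}$'' is not meaningful: by Theorem \ref{ls89} the splitting $I^{35}=N^{35}\oplus R^{35}$ is canonical, all Brown--Gitler summands lie in $N^{35}$, and none can migrate into the reduced part. Whatever reconciles the tabulated $N^{35}=J(14,12,11,8)$ with the general expression for $J_2^{5}$ (which would also contribute $J(7)$ and $J(6)$), it is not absorption by $R^{35}$; one must instead re-examine which generators of $\cke{\partial_n^{34}}$ actually occur — note that the proof of the lemma computing $J_2^k$ only exhibits the generators indexed by $0\leq m\leq n-2\leq k-5$, a much smaller set when $k=5$. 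These are precisely the places where the ``lengthy but purely computational verification'' you defer cannot in fact be deferred.
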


On peut donc formuler le résultat principal sur la résolution injective minimale de $F(1)$.

\begin{theo}\label{princh43}
Pour $n\geq 6$ on a:
	\begin{center}
		\fontsize{9pt}{11pt}\selectfont
		\begin{longtable}{|c||*{7}{c|}}
			\hline
			\rowcolor{Gray}\makebox[0.7em]{$k$}& \makebox[0.7em]{$2^{n}-32$}& \makebox[0.7em]{$2^{n}-31$}& \makebox[0.7em]{$2^{n}-30$}& \makebox[0.7em]{$2^{n}-29$}&\makebox[2.3em]{$2^{n}-28$}\\\hline
			$N^{k}$ & $0$& $J(16)$& $J(15,14,12)$ &$J(14,12,11,8)$ & $J(13,10,5)$\\\hline\hline
			\rowcolor{Gray} \makebox[0.7em]{$k$}&\makebox[2.3em]{$2^{n}-27$}& \makebox[0.7em]{$2^{n}-26$}& \makebox[0.7em]{$2^{n}-25$}& \makebox[0.7em]{$2^{n}-24$}& \makebox[0.7em]{$2^{n}-23$} \\\hline
			$N^{k}$& $J(12,4,3)$ & $J(11,6,3)$& $J(10,2)$& $J(9)$& $J(8)$\\\hline
			\rowcolor{Gray}$k$ & \makebox[0.7em]{$2^{n}-22$}& \makebox[0.7em]{$2^{n}-21$}& $2^{n}-20$& $2^{n}-19$& $2^{n}-18$\\\hline
			$N^{k}$ & $J(7,6)$ & $J(6,4)$& $J(5)$& $J(4)$& $J(3)$\\\hline\hline
			\rowcolor{Gray}$k$& $2^{n}-17$& $2^{n}-16$& $2^{n}-15$& $2^{n}-14$& $2^{n}-13$\\ \hline
			$N^{k}$& $J(2)$& $0$&$J(8)$ & $J(7,6)$ & $J(6,4)$\\\hline
			\rowcolor{Gray}$k$& $2^{n}-12$& $2^{n}-11$& $2^{n}-10$& $2^{n}-9$& $2^{n}-8$\\ \hline
			$N^{k}$& $J(5)$ & $J(4)$ & $J(3)$ & $J(2)$ & $0$\\\hline
			\rowcolor{Gray}$k$& $2^{n}-7$& $2^{n}-6$& $2^{n}-5$& $2^{n}-4$& $2^{n}-3$\\ \hline
						$N^{k}$& $J(4)$& $J(3)$&$J(2)$ & $0$ & $J(2)$\\\hline
						\rowcolor{Gray}$k$& $2^{n}-2$& $2^{n}-1$& $2^{n}$& \multicolumn{2}{c|}{$2^{n}+1$}\\ \hline
						$N^{k}$& $0$ & $J(1)$ & $0$ & \multicolumn{2}{c|}{$J(2^{n-1})$}\\\hline
	\rowcolor{Gray}$k$& \multicolumn{3}{c|}{$2^{n}+2$}  & \multicolumn{2}{c|}{$2^{n}+3$}   \\ \hline
							$N^{k}$& \multicolumn{3}{c|}{$J(2^{n-1}-1,2^{n-1}-2,\ldots,2^{n-1}-2^{n-3})$} &  \multicolumn{2}{c|}{$J(2^{n-2})\oplus A_{2^{n}+3}$} \\\hline
		\end{longtable}
	\end{center}
	$A_{2^{n}+3}$ désignant $$\left(\bigoplus_{i=1}^{n-3}J\left(2^{n-1}-2^{i}\right)\right)\bigoplus \left(\bigoplus_{\substack{1\leq i\leq n-2\\0\leq j\leq i-2}}^{}J\left(2^{n-1}-2^{i}-2^{j}\right)\right).$$
\end{theo}

La proposition \ref{calculessentiel} permet de conclure:
\begin{theo}\label{princh44}
	Pour $i\leq 49$ ou $i=2^{n}-2^{5}+t$ avec $0\leq t\leq 2^{5}+2$ et $n>5$, il y a des monomorphismes
	$$\ex{\u}{i}{\Phi^{r} F(1)}{\Phi^{r} F(1)}\hookrightarrow\ex{\u}{i}{\Phi^{r+1} F(1)}{\Phi^{r+1} F(1)}$$
	pour tout $r$.
\end{theo}
\begin{notat} Soient $d$ un entier pair et $2^{n_1}+\cdots+2^{n_k}$ son expression $2-$adique. On note: $$\upsilon(d)=1+n_k-k.$$
\end{notat}
Le théorème suivant est un corollaire du théorème \ref{princh44}.
\begin{theo}
	Pour $d\leq 49$ ou $D=2^{n}-2^{5}+t$ avec $0\leq t\leq 2^{5}+2$ et $n>5$, on a:
	$$\ex{\u}{d}{\Phi^{r}F(1)}{\Phi^{r}F(1)}=\left\{\begin{array}{ll}
	0&\text{ si } 2\not|\hspace{1mm}d,\\
	0&\text{ si } 2\hspace{1mm}|\hspace{1mm}d\text{ et }r<\upsilon(d),\\
	\f2&\text{ sinon}.
	\end{array}\right.$$
\end{theo} 
\begin{theo}\label{conjfaible}
	Soit $d$ un entier pair, on a:
	$$\f2\subset \ex{\u}{d}{\Phi^{r}F(1)}{\Phi^{r}F(1)}\text{ si } r\geq \upsilon(d).$$
	De plus si $r\geq \upsilon(d)$ le morphisme 
	$$\ex{\u}{d}{\Phi^{r}F(1)}{\Phi^{r}F(1)}\to \ex{\F}{d}{I}{I}$$
	est non-trivial.
\end{theo}
\begin{proof}
	Le cas de $d=2^n$ a été calculé à l'aide du lemme \ref{calculessentiel}. En effet  $ \ex{\u}{2^{n}}{\Phi^{r}F(1)}{\Phi^{r}F(1)}\cong\f2$ pour tout $r$ tel que $r\geq n$. On raisonne par récurrence sur la longueur $2-$adique de $d$. On suppose que le théorème est vérifié pour $d$ dont $\alpha(d)<k$. On passe au cas de $\alpha(d)=k$. On note $d=2^{n_1}+2^{n_2}+\cdots+2^{n_k}$ où $1\leq n_1<n_2<\ldots<n_k.$ On désigne par $d_1$ la somme $2^{n_2}+\cdots+2^{n_k}$. Pour $r\geq \upsilon(d)$ le diagramme suivant est donc commutatif:
	$$\xymatrixrowsep{3pc}\xymatrixcolsep{4pc}\xymatrix{
		\ex{\u}{d_1}{\Phi^{r}F(1)}{F(1)}\ar[r]^-{\smile \delta_{n_1}}\ar[d]& \ex{\u}{d}{\Phi^{r-1}F(1)}{F(1)}\ar[d]\\
		\ex{\F}{d_1}{I}{I}\ar[r]^{\smile e_{n_1}}&\ex{\F}{d}{I}{I}
	}$$
	$\delta_{n_1}$ désignant le générateur du groupe $ \ex{\u}{2^{n_1}}{\Phi^{r-1}F(1)}{\Phi^{r}F(1)}$. Par hypothèse de récurrence, le composé 
	$$\ex{\u}{d_1}{\Phi^{r}F(1)}{F(1)}\to\ex{\F}{d_1}{I}{I}\xrightarrow{\smile e_{n_1}}\ex{\F}{d}{I}{I}$$
	est non-trivial. On en déduit que $\f2\subset \ex{\u}{d}{\Phi^{r-1}F(1)}{F(1)}$ et que le morphisme $$\ex{\u}{d}{\Phi^{r}F(1)}{F(1)}\to\ex{\F}{d_1}{I}{I}$$ est non-trivial.
\end{proof}

On peut donc conjecturer:
\begin{conj}\label{conj1}
	Soit $d$ un entier, on a:
	$$\ex{\u}{d}{\Phi^{r}F(1)}{\Phi^{r}F(1)}=\left\{\begin{array}{ll}
	0&\text{ si } 2\not|\hspace{1mm}d,\\
	0&\text{ si } 2\hspace{1mm}|\hspace{1mm}d\text{ et }r<\upsilon(d),\\
	\f2&\text{ sinon}.
	\end{array}\right.$$
	De plus, il y a des monomorphismes
	$$\ex{\u}{d}{\Phi^{r} F(1)}{\Phi^{r} F(1)}\hookrightarrow\ex{\u}{d}{\Phi^{r+1} F(1)}{\Phi^{r+1} F(1)}$$
	pour tout $r$.
\end{conj}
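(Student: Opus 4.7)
Mon plan est d'étendre les arguments démontrant les propositions \ref{calculessentiel} et \ref{conjfaible}. Je commencerai par appliquer l'isomorphisme (\ref{eqk1}) de la proposition \ref{phi}, qui ramène la conjecture au calcul des groupes $\ex{\u}{d}{\Phi^{r}F(1)}{F(1)}$ via la résolution injective minimale $(I^{\bullet},\partial^{\bullet})=(N^{\bullet}\oplus R^{\bullet},\partial^{\bullet})$. Puisque $\ho{\u}{\Phi^{r}F(1)}{R^{j}}$ ne dépend que de $\left(R^{j}\right)^{1}$, et puisque cette partie est déjà entièrement déterminée par le paragraphe 3.1 (un unique facteur $\tilde{\h}^{*}\z/2$ en degré pair, rien en degré impair), tout repose sur la structure précise des modules nilpotents $N^{j}$.

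Le raffinement-clé que je devrai établir, pour tout $l\geq 1$, est la propriété structurelle suivante: $N^{2l+1}$ contient exactement un facteur direct de type $J(2^{n})$, à savoir $J\left(2^{\upsilon(2l)-1}\right)$, les autres facteurs étant de la forme $J\left(2^{m}(2t+1)\right)$ avec $t\geq 1$, et $N^{2l}$ ne contient aucun facteur direct de type $J(2^{n})$. Admettant cette propriété, la proposition \ref{calculessentiel} fournira immédiatement $\ex{\u}{2l}{\Phi^{r}F(1)}{F(1)}\cong\f2$ pour $r\geq \upsilon(2l)$ et la nullité pour $r<\upsilon(2l)$, ainsi que l'injectivité des morphismes induits par $\Phi$. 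Le cas $d$ impair découlera simultanément de la nullité de $\ho{\u}{\Phi^{r}F(1)}{N^{2l}}$ et de celle de $\left(R^{2l+1}\right)^{1}$ (due à la $1$-connexité des injectifs réduits indécomposables autres que $\tilde{\h}^{*}\z/2$).

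Pour établir cette propriété structurelle, je procéderai par récurrence sur $l$ en exploitant le théorème de périodicité \ref{princh42}, qui ramène l'étude de $N^{2^{n}-2^{k}+t}$ à celle de $N^{2^{k}+t}$ pour $0\leq t\leq 2^{k}-1$, conjointement avec l'analyse des extensions $\mathscr{M}^{j}$ et de leur $Sq^{1}$-non-trivialité (théorème \ref{exnontriv1}). Les cas initiaux $t\leq 4$ sont traités à la section 6, notamment par la proposition \ref{nk2}; la position prédite $\upsilon(2l)-1$ devrait se retrouver inductivement en suivant la décomposition $2$-adique au travers des identifications entre les classes $e(n,k)\in\ex{\F}{*}{I}{I}$ et les morphismes de complexes $\alpha^{\bullet}:R^{\bullet}\to R^{\bullet+2^{n}-2^{k}}$ introduits au début de la section 4.

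La principale difficulté résidera dans le contrôle \emph{global} de la multiplicité des facteurs $J(2^{n})$ dans $N^{2l+1}$: il faudra exclure l'apparition de copies supplémentaires, ce qui revient à démontrer que seul le générateur $u^{2^{r}}\in R^{j}$ produit, via le morphisme $\omega^{j}$ de l'expression matricielle (\ref{omegal}), une composante de type $J(2^{n})$. Une difficulté complémentaire sera d'établir la borne supérieure $r<\upsilon(d)$ pour la nullité: le théorème \ref{conjfaible} n'en donne que la borne inférieure, et la nullité stricte en dessous de $\upsilon(d)$ demandera probablement une comparaison inductive des dimensions des $\ex{\u}{d}{\Phi^{r}F(1)}{F(1)}$ le long des suites exactes longues associées aux quotients $\h_{r}$, en tirant profit de la structure multiplicative de $\ex{\F}{*}{I}{I}$ décrite par le théorème \ref{fls94}.
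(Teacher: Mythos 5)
Il faut d'abord souligner que l'énoncé \ref{conj1} est précisément une \emph{conjecture} dans l'article: le texte ne le démontre pas, il ne fait que l'étayer par les cas particuliers (théorème \ref{princh44}, pour $i\leq 49$ ou $i=2^{n}-2^{5}+t$) et par la borne inférieure du théorème \ref{conjfaible}. Votre plan reprend exactement la stratégie de l'article — réduction via la proposition \ref{phi}, puis application de la proposition \ref{calculessentiel} — mais il ne la dépasse pas: la «~propriété structurelle clé~» que vous énoncez (un unique facteur $J(2^{\upsilon(2l)-1})$ dans $N^{2l+1}$, aucun facteur $J(2^{n})$ dans $N^{2l}$) est, \emph{via} la proposition \ref{calculessentiel}, équivalente à la conjecture elle-même. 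Vous avez donc reformulé l'énoncé, non démontré.

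Le point où l'argument ne se referme pas est l'induction par périodicité. Le théorème \ref{princh42} ne ramène $N^{2^{n}-2^{k}+t}$ à $N^{2^{k}+t}$ que pour $0\leq t\leq 2^{k}-1$; pour exploiter cette réduction il faut connaître $N^{2^{k}+t}$ pour $t$ allant jusqu'à $2^{k-1}$ environ, or la section 6 ne calcule ces termes que pour $t\leq 4$ (propositions \ref{nk2} et suivantes, qui reposent sur la résolution injective minimale de $\h_{k}$, elle-même inconnue au-delà des quatre premiers termes). L'ensemble des indices atteignables par «~périodicité plus cas initiaux~» est donc exactement celui couvert par le théorème \ref{princh43}, et non tous les entiers: votre récurrence sur $l$ n'a pas de pas de récurrence en dehors de ces zones. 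Les deux difficultés que vous signalez vous-même — exclure des copies supplémentaires de $J(2^{n})$ dans $N^{2l+1}$, et établir la nullité stricte pour $r<\upsilon(d)$ (le théorème \ref{conjfaible} ne donne que l'inclusion $\f2\subset\ex{\u}{d}{\Phi^{r}F(1)}{\Phi^{r}F(1)}$, pas l'égalité) — sont précisément les obstacles qui font que l'énoncé reste conjectural; les nommer comme «~difficultés principales~» sans les résoudre ne constitue pas une démonstration.
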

\backmatter
\appendix
\section{Sur l'acyclicité des modules instables injectifs réduits}\label{tecnic}
Rappelons que le carré de l'opération de Bockstein $Sq^{1}$ est trivial. Soit $M$ un module
instable, alors $\left(M^{\bullet},Sq^{1}\right)$ est un complexe. Le module $M$ est dit
$Sq^{1}-$acyclique si $\left(M^{\bullet},Sq^{1}\right)$ est acyclique. D'après \cite{LZ86}, les
modules injectifs réduits connexes sont $Sq^{1}-$acycliques.
\begin{defi}\label{keybefore}
Soient $x$ un élément de $\left(R^{m}\right)^{k}$ et $t\geq 1$ un nombre entier. On dit que $x$
admet  une $t-$ième $Sq^{1}-$pré-image $y\in \left(R^{m-t-1}\right)^{k+t}$ si il existe des éléments
$$\left\{x_i\in R^{m-i-1}\left|0\leq i\leq t-1, Sq^1x_i\neq 0\right.\right\}$$ tels que 
\begin{align*}
\partial_{r}^{m-1}\left(x_{0}\right)&=x,\\
\partial_{r}^{m-t-1}(y)&=Sq^{1}x_{t-1},\\
\partial_{r}^{m-t+i}\left(x_{t-i-1}\right)&=Sq^{1}x_{t-i-2}, 0\leq i\leq t-2.
\end{align*}
Une telle suite $\left(x_{0},x_{1},\ldots,x_{t-1}\right)$ est appelée une $(x\to y)-$suite.
\begin{center}
\definecolor{xdxdff}{rgb}{0.49,0.49,1}
\definecolor{qqqqff}{rgb}{0,0,1}
\definecolor{cqcqcq}{rgb}{0.75,0.75,0.75}
\begin{tikzpicture}[line cap=round,line join=round,>=triangle 45,x=1.125cm,y=0.5cm]
\draw [color=cqcqcq,dash pattern=on 1pt off 1pt, xstep=1.125cm,ystep=0.5cm] (0,0) grid (8.5,7);
\draw[->,color=black] (-0.5,0) -- (8.5,0);
\foreach \x in {1}
\draw[shift={(\x,0)},color=black] (0pt,2pt) -- (0pt,-2pt) node[below] {\footnotesize $R^{m-t-1}$};
\foreach \x in {2}
\draw[shift={(\x,0)},color=black] (0pt,2pt) -- (0pt,-2pt) node[below] {\footnotesize $R^{m-t}$};
\foreach \x in {3}
\draw[shift={(\x,0)},color=black] (0pt,2pt) -- (0pt,-2pt) node[below] {\footnotesize $R^{m-t+1}$};
\foreach \x in {6}
\draw[shift={(\x,0)},color=black] (0pt,2pt) -- (0pt,-2pt) node[below] {\footnotesize $R^{m-2}$};
\foreach \x in {7}
\draw[shift={(\x,0)},color=black] (0pt,2pt) -- (0pt,-2pt) node[below] {\footnotesize $R^{m-1}$};
\foreach \x in {8}
\draw[shift={(\x,0)},color=black] (0pt,2pt) -- (0pt,-2pt) node[below] {\footnotesize $R^{m}$};
\draw[->,color=black] (0,-1) -- (0,7);
\foreach \y in {4}
\draw[shift={(0,\y)},color=black] (2pt,0pt) -- (-2pt,0pt) node[left] {\footnotesize $k+t-2$};
\foreach \y in {5}
\draw[shift={(0,\y)},color=black] (2pt,0pt) -- (-2pt,0pt) node[left] {\footnotesize $k+t-1$};
\foreach \y in {6}
\draw[shift={(0,\y)},color=black] (2pt,0pt) -- (-2pt,0pt) node[left] {\footnotesize $k+t$};
\foreach \y in {0,1}
\draw[shift={(0,1+\y)},color=black] (2pt,0pt) -- (-2pt,0pt) node[left] {\footnotesize $k+\y$};

\clip(-0.45,-0.8) rectangle (8.5,7);

\draw [->] (1,6) -- (2,6);
\draw [->] (2,5) -- (3,5);
\draw [->] (6,2) -- (7,2);
\draw [-] (7,1) -- (8,1);
\draw [dash pattern=on 1pt off 1pt] (3,4)-- (6,3);
\draw [dash pattern=on 1pt off 1pt] (1,6)-- (1,0);
\draw [dash pattern=on 1pt off 1pt] (2,5)-- (2,0);
\draw [dash pattern=on 1pt off 1pt] (3,4)-- (3,0);
\draw [dash pattern=on 1pt off 1pt] (6,2)-- (6,0);
\draw [dash pattern=on 1pt off 1pt] (7,1)-- (7,0);
\draw [dash pattern=on 1pt off 1pt] (8,1)-- (8,0);
\draw [dash pattern=on 1pt off 1pt] (1,6)-- (0,6);
\draw [dash pattern=on 1pt off 1pt] (2,5)-- (0,5);
\draw [dash pattern=on 1pt off 1pt] (3,4)-- (0,4);
\draw [dash pattern=on 1pt off 1pt] (6,2)-- (0,2);
\draw [dash pattern=on 1pt off 1pt] (7,1)-- (0,1);
\begin{scriptsize}
\draw [-,color=qqqqff] (2,5) edge[bend left=40] (2,6);
\draw [-,color=qqqqff] (3,4) edge[bend left=40] (3,5);
\draw [-,color=qqqqff] (6,2) edge[bend left=40] (6,3);
\draw [-,color=qqqqff] (7,1) edge[bend left=40] (7,2);
\draw[color=qqqqff] (1.6,5.5) node {$Sq^1$};
\draw[color=qqqqff] (2.6,4.5) node {$Sq^1$};
\draw[color=qqqqff] (5.6,2.5) node {$Sq^1$};
\draw[color=qqqqff] (6.6,1.5) node {$Sq^1$};
\node[rounded corners,fill=gray!25] (y) at (1,6) [draw] {$y$};
\node[rounded corners,fill=gray!25] (t1) at (2,5) [draw] {$x_{t-1}$};
\node[rounded corners,fill=gray!25] (t3) at (3,4) [draw] {$x_{t-2}$};
\node[rounded corners,fill=gray!25] (t5) at (6,2) [draw] {$x_{1}$};
\node[rounded corners,fill=gray!25] (t7) at (7,1) [draw] {$x_{0}$};
\node[rounded corners,fill=gray!25] (t8) at (8,1) [draw] {$x$};
\draw[color=black] (1.55,6.4) node {$\partial_{r}^{m-t-1}$};
\draw[color=black] (2.65,5.4) node {$\partial_{r}^{m-t}$};
\draw[color=black] (6.5,2.4) node {$\partial_{r}^{m-2}$};
\draw[color=black] (7.5,1.4) node {$\partial_{r}^{m-1}$};
\fill [color=qqqqff] (2,6) circle (1.5pt);
\fill [color=qqqqff] (3,5) circle (1.5pt);
\fill [color=qqqqff] (6,3) circle (1.5pt);
\fill [color=qqqqff] (7,2) circle (1.5pt);
\end{scriptsize}
\end{tikzpicture}
\end{center}
On note:
\begin{align*}
\ro{0}{m}{x} & =\left(\partial_{r}^{m-1}\right)^{-1}(x),\\
\ro{t}{m}{x} & =\left\{y\left|y \text{ est une }t-\text{ième }Sq^{1}-\text{pré-image de
}x\right.\right\},t\geq 1.
\end{align*}
\end{defi}
\begin{lemm}\label{s1}
Soit $n$ un entier et notant $u^{2}\in \tilde{\h}^{*}\z/2\subset R^{2^{n}-2}.$ S'il existe une
$\left(u^{2}\to x\right)-$suite $\left(x_{0},x_{1},\ldots,x_{t-1}\right)$, alors il existe $y$ tel
que $\left(x_{0},x_{1},\ldots,x_{t-1},x\right)$ est une  $\left(u^{2}\to y\right)-$suite.
\end{lemm}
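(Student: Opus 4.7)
Pour établir l'extension, il convient de vérifier deux points: (i) $Sq^{1}x\neq 0$, et (ii) il existe $y\in R^{m-t-2}$ tel que $\partial_{r}^{m-t-2}(y)=Sq^{1}x$, où $m:=2^{n}-2$. Je traiterai d'abord (ii), plus direct.

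Pour (ii), on commence par noter que $Sq^{1}x$ est un cycle puisque $\partial_{r}(Sq^{1}x)=Sq^{1}(\partial_{r}x)=Sq^{1}Sq^{1}x_{t-1}=0$; sa classe réside dans $\ell^{m-t-1}(F(1))^{t+3}$. Par le corollaire \ref{calcul1}, $\ell^{m-t-1}(F(1))\cong F(1)/\Phi^{j}F(1)$ où $m-t-1=2^{j}(2s+1)$. Comme le module $F(1)$ n'est non-trivial qu'en les degrés puissances de $2$, si $t+3$ n'en est pas une, la nullité est immédiate. Sinon, $t+3=2^{k}$, et l'identité $m-t-1=2^{n}-2^{k}=2^{k}(2^{n-k}-1)$ impose $j=k$, donc $u^{t+3}\in\Phi^{j}F(1)$ et $(F(1)/\Phi^{j}F(1))^{t+3}=0$. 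Dans tous les cas $[Sq^{1}x]=0$, et $Sq^{1}x=\partial_{r}y$ pour un certain $y\in R^{m-t-2}$.

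Pour (i), je procéderai par l'absurde en supposant $Sq^{1}x=0$. La $Sq^{1}$-acyclicité des modules instables injectifs réduits connexes (Lannes-Zarati \cite{LZ86}, rappelée en appendice) fournit $w_{0}\in(R^{m-t-1})^{t+1}$ avec $Sq^{1}w_{0}=x$. Je construirai récursivement des $w_{i}\in(R^{m-t-1+i})^{t+1-i}$ pour $1\leq i\leq t-1$ satisfaisant $Sq^{1}w_{i}=\partial_{r}w_{i-1}+x_{t-i}$: à chaque étape, la commutation de $\partial_{r}$ et $Sq^{1}$ conjointe aux relations $\partial_{r}x_{j}=Sq^{1}x_{j-1}$ de la suite originale montre que le second membre est $Sq^{1}$-fermé, et l'acyclicité produit le relèvement voulu. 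Au terme du procédé, l'élément $\partial_{r}w_{t-1}+x_{0}\in(R^{m-1})^{2}$ est également $Sq^{1}$-fermé. Or $m-1$ étant impair, $R^{m-1}$ ne contient aucun facteur isomorphe à $\tilde{\h}^{*}\z/2$, et les autres facteurs injectifs réduits indécomposables étant $1$-connexes, on a $(R^{m-1})^{1}=0$. L'acyclicité force alors $\ker\bigl(Sq^{1}:(R^{m-1})^{2}\to(R^{m-1})^{3}\bigr)=0$, d'où $\partial_{r}w_{t-1}+x_{0}=0$, soit $x_{0}=\partial_{r}w_{t-1}$. Il s'ensuit $\partial_{r}x_{0}=\partial_{r}^{2}w_{t-1}=0$, en contradiction avec $\partial_{r}x_{0}=u^{2}\neq 0$.

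La difficulté principale résidera dans le contrôle rigoureux des degrés le long de la récurrence et la justification soigneuse que $(R^{m-1})^{1}=0$, indispensable pour conclure; le reste de l'argument n'est qu'un jeu de commutation entre $\partial_{r}$ et $Sq^{1}$.
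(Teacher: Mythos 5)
Your proof is correct and, on the key point (i), takes essentially the same route as the paper: argue by contradiction, use the $Sq^{1}$-acyclicity of reduced connected injectives to lift recursively along the sequence, and reach a contradiction from the $1$-connectivity of $R^{2^{n}-3}$ --- your handling of the degenerate case $\partial_{r}w_{t-1}+x_{0}=0$ via $\partial_{r}x_{0}=u^{2}\neq 0$ is in fact slightly more careful than the paper's one-line conclusion that $\left(R^{2^{n}-3}\right)^{1}$ would be non-trivial. Point (ii) is left implicit in the paper (which simply declares that it suffices to show $Sq^{1}x\neq 0$); your verification through the corollaire \ref{calcul1} and the vanishing of $\left(F(1)/\Phi^{k}F(1)\right)^{2^{k}}$ is correct and usefully fills that gap, modulo the implicit constraint $t\leq 2^{n}-4$ (needed anyway for $y$ to live in some $R^{m-t-2}$, and which guarantees $k<n$ in your $2$-adic valuation computation).
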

\begin{proof}
Il suffit de démontrer que $Sq^{1}x$ n'est pas trivial. On suppose par l'absurde qu'il est trivial.
Alors il existe $y_{0}\in R^{2^{n}-t-3}$ tel que 
$$Sq^{1}y_{0}=x.$$
Il suit que
\begin{align*}
Sq^{1}\left(\partial_{r}^{2^{n}-t-3}(y_{0})-x_{t-1}\right)&= 0.
\end{align*}
Une simple récurrence montre qu'il existe:
$$\left\{y_{0},y_{1},\ldots,y_{t-1}\left|y_{i}\in R^{2^{n-t-3}+i}\right.\right\}$$
tels que 
\begin{align*}
Sq^{1}\left(\partial_{r}^{2^{n}-t-3+i}(y_{i})-x_{t-1-i}\right)&= 0.
\end{align*}
En particulier 
\begin{align*}
Sq^{1}\left(\partial_{r}^{2^{n}-4}(y_{t-1})-x_{0}\right)&= 0.
\end{align*}
donc $\left(R^{2^{n}-3}\right)^{1}$ n'est pas trivial d'où la contradiction. Le lemme s'en résulte.
\end{proof}
Le lemme \ref{s1} permet de construire une chaîne précise connectant
les éléments $u^{2}\in R^{2^{n}-2}$ et $u^{2^{n}-1}\in R^{0}$.
\begin{lemm}
Il existe une $\left(u^{2}\to u^{2^{n}-1}\right)-$suite 
$$\left\{x_i\in R^{2^{n}-i-3},0\leq i\leq 2^{n}-4, Sq^1x_i\neq 0\right\}$$ telle que pour $2\leq
t\leq n-1,$
\begin{align*}
Sq^{1}x_{2^{t}-3}&=u^{2^{t}}.
\end{align*}
\end{lemm}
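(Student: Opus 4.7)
The construction proceeds by iterated application of Lemma~\ref{s1}, starting from $x=u^{2}\in R^{2^{n}-2}$. Exactness of $(R^{\bullet},\partial^{\bullet}_{r})$ yields $x_{0}\in R^{2^{n}-3}$ with $\partial^{2^{n}-3}_{r}(x_{0})=u^{2}$; Lemma~\ref{s1} then ensures $Sq^{1}x_{0}\neq 0$ and furnishes $x_{1}\in R^{2^{n}-4}$ with $\partial^{2^{n}-4}_{r}(x_{1})=Sq^{1}x_{0}$; iterating $2^{n}-3$ times gives the full chain $(x_{0},\ldots,x_{2^{n}-4})$ and a terminal element $y\in R^{0}=\tilde{\h}^{*}\z/2$. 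Since $(R^{0})^{2^{n}-1}=\f2\cdot u^{2^{n}-1}$, and $Sq^{1}x_{2^{n}-4}$ is a nontrivial boundary in $R^{1}$ (nonzero by Lemma~\ref{s1}, a cycle because $Sq^{1}\circ Sq^{1}=0$ and $\partial_{r}$ commutes with $Sq^{1}$, with $\h^{1}(R^{\bullet},\partial^{\bullet}_{r})=\ell^{1}(F(1))=0$ forcing cycles to be boundaries), the terminal element must equal $y=u^{2^{n}-1}$.

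The heart of the argument is the prescribed identity $Sq^{1}x_{2^{t}-3}=u^{2^{t}}$ for $2\leq t\leq n-1$. At index $i=2^{t}-3$ we have $x_{i}\in R^{2^{n}-2^{t}}$; since $2^{n}-2^{t}$ is even, this module contains a unique direct factor isomorphic to $\tilde{\h}^{*}\z/2$, in which the element $u^{2^{t}-1}$ of degree $2^{t}-1$ satisfies $Sq^{1}u^{2^{t}-1}=(2^{t}-1)u^{2^{t}}=u^{2^{t}}$. The plan is to choose $x_{2^{t}-3}$ so that its $\tilde{\h}^{*}\z/2$-projection is exactly $u^{2^{t}-1}$. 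Two candidate choices of $x_{2^{t}-3}$ (both mapping to $Sq^{1}x_{2^{t}-4}$ under $\partial^{2^{n}-2^{t}}_{r}$) differ by a cycle; by Corollary~\ref{calcul1} the cohomology $\h^{2^{n}-2^{t}}(R^{\bullet},\partial^{\bullet}_{r})$ identifies with $\h_{t}=F(1)/\Phi^{t}F(1)$, whose degree $2^{t}-1$ component is trivial (since $F(1)^{2^{t}-1}=0$ for $t\geq 2$), so every cycle is a boundary and the freedom available is genuine room to adjust the $\tilde{\h}^{*}\z/2$-projection of $x_{2^{t}-3}$.

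The main obstacle is making these projection adjustments globally consistent while preserving the chain relations downstream. I would handle this by induction on $t$: given the chain up to index $2^{t}-3$ with the prescribed $\tilde{\h}^{*}\z/2$-projection $u^{2^{t}-1}$, extend via Lemma~\ref{s1} through the indices $2^{t}-2,\ldots,2^{t+1}-4$, then at the next boundary index $2^{t+1}-3$ modify the selected preimage by an appropriate boundary element of $R^{2^{n}-2^{t+1}-1}$ to align its $\tilde{\h}^{*}\z/2$-component with $u^{2^{t+1}-1}$. Because Lemma~\ref{s1} guarantees the non-triviality of every $Sq^{1}x_{i}$, the non-degeneracy conditions are preserved, and applying $Sq^{1}$ to the adjusted $x_{2^{t+1}-3}$ produces $u^{2^{t+1}}$ as required. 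A final verification that the terminal relation $\partial^{0}_{r}(u^{2^{n}-1})=Sq^{1}x_{2^{n}-4}$ is compatible with the induction closes the argument.
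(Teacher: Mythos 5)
Your construction of \emph{some} $\left(u^{2}\to y\right)$-suite of full length by iterating Lemma \ref{s1}, and your identification of the terminal element with $u^{2^{n}-1}$ via $\left(R^{0}\right)^{2^{n}-1}=\f2\cdot u^{2^{n}-1}$, are sound. The gap is precisely in what you call the heart of the argument, namely the identities $Sq^{1}x_{2^{t}-3}=u^{2^{t}}$. Your plan is to normalise the $\tilde{\h}^{*}\z/2$-projection of $x_{2^{t}-3}\in R^{2^{n}-2^{t}}$ to $u^{2^{t}-1}$ by adding a $\partial_{r}$-boundary. This does not suffice, for two reasons. First, $R^{2^{n}-2^{t}}$ is a direct sum of reduced injectives of which $\tilde{\h}^{*}\z/2$ is only one summand; even if the projection of $x_{2^{t}-3}$ onto that summand equals $u^{2^{t}-1}$, the operation $Sq^{1}$ applied to the components lying in the other summands has no reason to vanish, so all you get is $Sq^{1}x_{2^{t}-3}=u^{2^{t}}+(\text{terms in the other factors})$, which is not the asserted equality. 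Second, you never show that the desired adjustment is realisable: the freedom you have is exactly the image of $\partial_{r}^{2^{n}-2^{t}-1}$ in degree $2^{t}-1$, and nothing in your argument shows that this image hits the $\tilde{\h}^{*}\z/2$-component (note that $R^{2^{n}-2^{t}-1}$ is $1$-connected and contains no $\tilde{\h}^{*}\z/2$ summand, so this is not automatic). The vanishing of $\h^{2^{n}-2^{t}}\left(R^{\bullet},\partial_{r}^{\bullet}\right)$ in degree $2^{t}-1$, which you correctly deduce from Corollary \ref{calcul1}, only tells you that the set of admissible $x_{2^{t}-3}$ is a coset of the boundaries; it does not tell you that this coset contains an element whose $Sq^{1}$ is exactly $u^{2^{t}}$.

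The paper obtains these identities by an entirely different mechanism: induction on $n$. The Yoneda product with $e_{q-1}$ yields a chain map $m(\gamma_{\bullet}):R^{\bullet}\to R^{\bullet+2^{q}}$ with $m(\gamma_{2l})(u)=u$, hence $m(\gamma_{2l})\left(u^{2^{j}}\right)=u^{2^{j}}$ by $\a2$-linearity. One transports the $\left(u^{2}\to u^{2^{q}-1}\right)$-suite given by the induction hypothesis, together with its terminal element, from positions $R^{0},\ldots,R^{2^{q}-3}$ to positions $R^{2^{q}},\ldots,R^{2^{q+1}-3}$, so that the identities $Sq^{1}x_{2^{t}-3}=u^{2^{t}}$ are \emph{inherited} rather than arranged by hand; the chain is then completed down to $R^{0}$ by Lemma \ref{s1}, and the only remaining verification is that the transported elements still have non-trivial $Sq^{1}$, which the paper establishes by a separate $Sq^{1}$-acyclicity and contradiction argument. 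Some transport of this kind (or an explicit computation replacing it) is needed; the projection-adjustment step as you describe it does not close the argument.
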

\begin{proof}
On résout par récurrence sur $n$. Le cas $n=1$ est trivial. Supposons que le lemme est vrai pour
$n\leq q$, on montre qu'il est vrai pour $n=q+1$. Le produit de Yoneda \cite[proposition 7.2]{FLS94}
$$\ex{\F}{0}{I}{I}\xrightarrow{\smile e^{q-1}}\ex{\F}{2^{q}}{I}{I}$$ 
induit un morphisme de complexes:
$$\gamma_{\bullet}:f(R^{\bullet})\to f(R^{\bullet+2^{q}}).$$
Soit $m$ l'adjoint à droite de $f$ \cite{HLS93}. Il y a un morphisme de
complexes:
$$m(\gamma_{\bullet}):R^{\bullet}\to R^{\bullet+2^{q}}.$$
Par la propriété du produit de Yoneda  \cite{FLS94}, pour tout $l\leq 2^{q}-1,$ 
$$m(\gamma_{2l})(u)=u.$$ 
Par hypothèse de récurrence, il existe une $\left(u^{2}\to
u^{2^{q}-1}\right)-$suite
$$\left\{x_i\in R^{2^{q}-3-i}\left|1\leq i\leq
2^{q}-3,Sq^{1}x_{i}\neq 0\right.\right\}$$ telle que:
\begin{align*}
Sq^{1}x_{2^{t}-3}&=u^{2^{t}-1},2\leq t\leq q.
\end{align*}
Grâce au lemme \ref{s1}, il suffit de démontrer que pour $1\leq i\leq 2^{q}-3,$
$$Sq^{1}m(\gamma_{i})\left(x_{2^{q}-3-i}\right)\neq 0.$$ 
On raisonne par récurrence sur $i$. Parce que $\left(R^{2^{q+1}-3}\right)^{1}$ est trivial, la
$Sq^{1}-$acyclicité de $R^{2^{q+1}-3}$ montre que  $Sq^{1}m(\gamma_{2^{q}-3})(x_0)$ n'est pas
trivial. 
Supposons que pour $1\leq t< i\leq 2^{q}-3,$
$$Sq^{1}m(\gamma_{i})\left(x_{2^{q}-3-i}\right)\neq 0.$$
On passe au cas $i=t$. Si 
$Sq^{1}m(\gamma_{t})\left(x_{2^{q}-3-t}\right)$ est trivial 
alors il existe
$$\left\{z_{j}\in R^{2^{q}+j},t\leq j\leq 2^{q}-4\right\}$$ 
tels que
$$Sq^{1}z_{j+1}=m(\gamma_{j+1})\left(x_{2^{q}-3-j-1}\right)-\partial_{r}^{2^{q}+j}
\left(z_j\right).$$ 
En particulier, on a 
 $$Sq^{1}z_{2^{q}-3}=m(\gamma_{2^{q}-3})\left(x_{0}\right)-\partial_{r}^{2^{q+1}-4}\left(z_{2^{q+1}
-4 }
\right).$$
 Il en résulte que
\begin{align*}
u^{2}&=\partial_{r}^{2^{q+1}-3}\left(m(\gamma_{2^{q}-3})\left(x_{0}\right)\right)\\
&=\partial_{r}^{ 2^ {q+1} -3}\left(\partial_{r}^{2^{q+1} -4 } \left(z_ { 2^ {q}-4}\right)
\right)  \\
&=0
\end{align*}
d'où la contradiction.
\end{proof}
\begin{coro}\label{keyfinal}
Il existe une $\left(u^{2^{t}}\to u^{2^{n}-1}\right)-$suite 
$$\left\{x_i\in R^{2^{n+1}+2^{n}-2^{t}-1-i},0\leq i\leq 2^{n}-2^{t}, Sq^1 x_i\neq 0\right\}$$ telle
que 
\begin{align*}
Sq^{1}x_{2^{k}-2^{t}-1}&=u^{2^{k}},t\leq k\leq n-1.
\end{align*}
\end{coro}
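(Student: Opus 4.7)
The plan is to derive the corollary from the preceding lemma by combining sub-suite extraction with a Yoneda-shift argument, in the same spirit as the proofs of the preceding lemma and of Theorem~\ref{princh42}.

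First, the preceding lemma furnishes a $(u^2 \to u^{2^n - 1})$-suite $(\xi_0, \ldots, \xi_{2^n - 4})$ with $\xi_i \in R^{2^n - 3 - i}$ and $Sq^1 \xi_{2^k-3} = u^{2^k}$ for $2 \leq k \leq n-1$. By the defining recursion $\partial_r(\xi_{j+1}) = Sq^1 \xi_j$, the tail $(\xi_{2^t - 2}, \ldots, \xi_{2^n - 4})$ is itself a $(u^{2^t} \to u^{2^n - 1})$-sub-suite, because $\partial_r(\xi_{2^t - 2}) = Sq^1 \xi_{2^t - 3} = u^{2^t}$ and, by the preceding lemma, $Sq^1 \xi_{2^k - 3} = u^{2^k}$ for all $k$ in the required range. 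This sub-suite, however, lives in the low Ext region $R^{2^n - 2^t - 1}$ down to $R^1$, whereas the corollary demands a suite sitting in the higher region $R^{2^{n+1} + 2^n - 2^t - 1}$ down to $R^{2^{n+1} - 2}$.

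To translate the suite into the target region, I would invoke the non-trivial class $e_n \in \ex{\F}{2^{n+1}}{I}{I}$ supplied by Theorem~\ref{fls94}. Cup product with $e_n$ induces a chain map $\gamma_\bullet : f(R^\bullet) \to f(R^{\bullet + 2^{n+1}})$, and since the $R^i$ are $\nil$-closed the adjoint yields a chain morphism $\alpha^\bullet : R^\bullet \to R^{\bullet + 2^{n+1}}$ with $\alpha^{2l}(u) = u$ in the appropriate range, exactly as in the proofs of the preceding lemma and of Theorem~\ref{princh42}. Since $\alpha^\bullet$ is $\mathcal{A}$-linear it sends each $u^{2^k}$ to $u^{2^k}$ and commutes with $Sq^1$, so the translated sequence $(\alpha \xi_{2^t - 2}, \ldots, \alpha \xi_{2^n - 4})$ is still a $(u^{2^t} \to u^{2^n - 1})$-suite with all the prescribed $Sq^1$ values, now positioned from $R^{2^{n+1} + 2^n - 2^t - 1}$ down to $R^{2^{n+1} + 1}$.

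The shifted sub-suite is two steps shy of the endpoint $u^{2^n - 1} \in R^{2^{n+1} - 2}$ required by the corollary, so I would close the gap by applying Lemma~\ref{s1} twice, each time extending the suite by an $Sq^1$-pre-image. The non-triviality of $Sq^1$ at each new element is forced by the same contradiction as in the proof of Lemma~\ref{s1}: if it failed, unwinding the $Sq^1$-acyclicity of the reduced injective factors would yield either a non-existent class in $(R^{2^{n+1} - 1})^1$ or a contradiction with the computation $\h^{2^{n+1} - 2}(R^\bullet, \partial_r^\bullet) \cong \ell^{2^{n+1} - 2}(F(1))$ from Corollary~\ref{calcul1}.

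The main obstacle is the final identification step: verifying that the $y$ produced by the two applications of Lemma~\ref{s1} is in fact the specific element $u^{2^n - 1}$ in Ext degree $2^{n+1} - 2$. This would be forced by the minimality of the resolution together with the explicit description $\ell^{2^{n+1} - 2}(F(1)) \cong F(1)/\Phi^n F(1)$ from Corollary~\ref{calcul1}, whose degree-$(2^n - 1)$ component is precisely spanned by $u^{2^n - 1}$, so that any candidate endpoint of the suite with the right bidegree must coincide with it.
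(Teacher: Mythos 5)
The paper prints no proof of this corollary, so there is nothing to compare line by line; your architecture --- extract the tail of the preceding lemma's suite starting at $u^{2^{t}}$, transport it by the chain map $R^{\bullet}\to R^{\bullet+2^{n+1}}$ obtained from cup product with $e_{n}\in\ex{\F}{2^{n+1}}{I}{I}$, then adjust at the bottom --- is indeed the natural reconstruction, and the bookkeeping of the tail extraction is correct ($\partial_{r}\xi_{2^{t}-2}=Sq^{1}\xi_{2^{t}-3}=u^{2^{t}}$ with $u^{2^{t}}\in R^{2^{n}-2^{t}}$, landing in $R^{2^{n+1}+2^{n}-2^{t}}$ after the shift, and the re-indexation $2^{k}-3\mapsto 2^{k}-2^{t}-1$ is right).

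The genuine gap is the sentence asserting that, because $\alpha^{\bullet}$ is $\mathcal{A}$-linear and commutes with $Sq^{1}$, the translated sequence ``is still a suite with all the prescribed $Sq^{1}$ values.'' Naturality gives $Sq^{1}\alpha(\xi_{i})=\alpha(Sq^{1}\xi_{i})$, hence the required identities $Sq^{1}\alpha(\xi)=u^{2^{k}}$ at the distinguished indices, but a suite also demands $Sq^{1}x_{i}\neq 0$ at \emph{every} index, and $\alpha^{\bullet}$ is not injective: $\alpha(Sq^{1}\xi_{i})$ may vanish even though $Sq^{1}\xi_{i}\neq 0$. Establishing exactly this non-vanishing is what occupies most of the paper's proof of the preceding lemma (the induction on $i$ showing $Sq^{1}m(\gamma_{i})(x_{2^{q}-3-i})\neq 0$, which descends through the complex using the $Sq^{1}$-acyclicity of the reduced injectives and the vanishing of $\left(R^{\mathrm{odd}}\right)^{1}$); you must run the analogous descent for the shift by $2^{n+1}$ rather than obtain it for free. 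A secondary point: Lemma~\ref{s1} is stated and proved only for suites anchored at $u^{2}\in R^{2^{n}-2}$ --- its contradiction unwinds to the top of the suite and exploits that the only solution of $Sq^{1}w=u^{2}$ lies in $\left(R^{\mathrm{odd}}\right)^{1}=0$, whereas for a suite anchored at $u^{2^{t}}$ the equation $Sq^{1}w=u^{2^{t}}$ admits $w=u^{2^{t}-1}$ in degree $2^{t}-1$, where the relevant component of $R^{m-1}$ need not vanish; so your two extension steps need their own argument, not a citation of Lemma~\ref{s1}. (Note that where the corollary is actually used, in the proof of Théorème~\ref{exnontriv1}, only the range $0\leq i\leq 2^{n}-2^{t}-2$ appears, i.e.\ exactly the shifted tail with endpoint in $R^{2^{n+1}}$; the two extra terms you labour to produce may simply be an off-by-two in the statement.)
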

\bibliographystyle{smfalpha}
\bibliography{Thuvienfr}
\end{document}